\documentclass[10pt]{amsart}
\usepackage{amssymb,amsmath,amsthm,amsfonts}
\usepackage{graphicx}
\usepackage{euscript,mathrsfs}
\usepackage{xcolor}
\usepackage{color}
\usepackage[T1]{fontenc}
\usepackage[utf8]{inputenc}
\usepackage{latexsym}
\usepackage{mathabx}
 
\theoremstyle{plain}
\newtheorem{theorem}{Theorem}
\newtheorem{proposition}[theorem]{Proposition}
\newtheorem{corollary}[theorem]{Corollary}

\theoremstyle{definition}
\newtheorem{definition}[theorem]{Definition}

\newtheorem{remark}[theorem]{Remark}
\newtheorem{question}[theorem]{Question}
\newtheorem{example}[theorem]{Example}

\DeclareMathOperator{\Imag}{Im}

\DeclareMathOperator{\side}{side}
\DeclareMathOperator{\diam}{diam}
\DeclareMathOperator{\dist}{dist}

\DeclareMathOperator{\GL}{GL}
\DeclareMathOperator{\Capp}{Cap}
\DeclareMathOperator{\spt}{spt}
\DeclareMathOperator{\Exc}{Exc}
\DeclareMathOperator{\GQC}{GQC}
\DeclareMathOperator{\SG}{SG}
\DeclareMathOperator{\SC}{SC}

\newcommand{\loc}{{\scriptstyle{loc}}}

\newcommand{\Sob}{{\scriptstyle{Sob}}}
\newcommand{\RH}{{\scriptstyle{RH}}}

\newcommand{\field}{\mathbb}
\newcommand{\R}{{\field{R}}}
\newcommand{\C}{{\field{C}}}

\newcommand{\N}{{\field{N}}}
\newcommand{\Sph}{{\field{S}}}

\newcommand{\bA}{{\mathbf A}}

\newcommand{\cK}{{\mathcal K}}
\newcommand{\cL}{{\mathcal L}}
\newcommand{\cF}{{\mathcal F}}
\newcommand{\cH}{{\mathcal H}}
\newcommand{\cM}{{\mathcal M}}
\newcommand{\eps}{{\epsilon}}
\newcommand{\ba}{{\mathbf a}}
\newcommand{\by}{{\mathbf y}}
\newcommand{\bi}{{\mathbf i}}
\newcommand{\cX}{{\mathcal X}}

\newcommand{\restrict}{\begin{picture}(12,12)
                        \put(2,0){\line(1,0){8}}
                        \put(2,0){\line(0,1){8}}
                       \end{picture}}

\makeindex             

\numberwithin{theorem}{section}
\numberwithin{equation}{section}

\title[Quasiconformal and Sobolev distortion of dimension]{Quasiconformal and Sobolev \\ distortion of dimension}
\author{Jeremy T. Tyson}
\address{Department of Mathematics, University of Illinois at Urbana-Champaign, 1409 West Green St., Urbana, IL 61801, USA}
\email{tyson@illinois.edu}
\date{July 30, 2025}

\begin{document}
\maketitle

\begin{abstract}
We review a selection of the literature on the distortion of metric notions of dimension under quasiconformal, quasisymmetric, and Sobolev mappings. Our story begins with Gehring's landmark 1973 higher integrability theorem for quasiconformal maps, along with its implications for the distortion of Hausdorff dimension. Astala's 1994 solution to the planar higher integrability conjecture led to renewed interest in the subject in two dimensions. We continue with results from the 2000s and 2010s on the distortion of dimension by Sobolev maps, including estimates for dimension increase for generic elements in parameterized families of subsets. In the abstract metric setting, Pansu's notion of conformal dimension provides a key quasisymmetric invariant which has been useful in a wide range of applications. We briefly review relevant facts about conformal dimension, highlighting results of interest in the Euclidean setting. We conclude with recent work of the author in collaboration with Chrontsios Garitsis and with Fraser, extending the previous theory to interpolating dimensions and providing new insight into both quasiconformal classification and conformal dimension.
\end{abstract}

\section{Introduction}\label{sec:introduction}

In this survey article, we discuss the distortion of metric notions of dimension by quasiconformal and Sobolev mappings in Euclidean domains.

The origins of this line of research lie in foundational analytic properties of quasiconformal maps, especially, higher integrability of the Jacobian as established by Bojarski (in the planar case), and Gehring (in all dimensions). A celebrated advance by Astala in the mid-1990s identified the sharp higher integrability exponent for planar quasiconformal maps, leading to renewed interest in their dimension and measure-theoretic distortion behavior. Subsequently, attention shifted to general Sobolev maps, leading to a new array of questions concerning generic dimension distortion for typical members in parameterized families of sets as well as analogous theories in non-Euclidean and metric settings.

The field of analysis in metric spaces has developed rapidly since its inception in the late 1990s. In this setting, the metric notion of quasisymmetric mapping is nowadays recognized as the natural counterpart for the classical Euclidean concept of quasiconformal mapping, and spaces of abstract first-order Sobolev mappings play a fundamental role in the equivalence of metric, geometric and analytic definitions for such mappings. Many classical dimension distortion results have metric space analogs. In concert with the development of analysis in metric spaces, substantial interest lies in dilatation-free (lower) bounds on quasisymmetric dimension distortion, formalized in Pansu's notion of {\it conformal dimension}.

A wide array of metric notions of dimension have been introduced to illuminate different features of fractal sets and spaces. Hausdorff dimension and box-counting dimension are by far the most classical notions, and our survey begins with them. With the rise of analysis in metric spaces, Assouad dimension has also surfaced as a notion of key interest. We briefly explain why, and we highlight how distortion results for Assouad dimension differ from those for Hausdorff and box-counting dimension. In the last section, we consider various dimension interpolants introduced by Falconer, Fraser, and their collaborators. Such concepts are one-parameter families of dimension functions continuously interpolating between two given notions of metric dimension. The focus of that final section is on the distortion of two such dimension interpolants by Sobolev and quasiconformal maps.

We have tried to keep this survey nontechnical, avoiding detailed proofs. However, we relax this constraint in the case of one signature result: Gehring and V\"ais\"al\"a's foundational theorem on the quasiconformal distortion of Hausdorff dimension. Here we provide more detail about their proof, and we sketch or comment on several other proofs using different tools. In this way, we showcase the scope and power of the underlying theory and draw out new connections for the reader to explore.

This survey is aimed at researchers in fractal geometry and geometric measure theory with limited prior exposure to quasiconformal mappings. In order to keep the presentation widely accessible, we restrict attention to Euclidean space. Some results which we present are more properly part of the abstract theory of analysis in metric spaces, but retain novelty and interest when restricted to the classical Euclidean case. For instance, our discussion of conformal dimension highlights specifically Euclidean consequences of a rich metric theory about which we are (largely) silent. We provide copious references to the literature to guide the reader in exploring these ideas further. In particular, sections \ref{sec:qc-history-and-background}, \ref{sec:conformal-dimension}, and \ref{sec:interpolation} point the reader to other surveys, books, and articles of general interest which cover the basics of quasiconformal maps, analysis in metric spaces and conformal dimension, and dimension interpolation.

Due to space constraints, we omit any discussion of numerous other interesting avenues of contemporary research, including
\begin{itemize}
\item dimension distortion results specific to the sub-Riemannian setting;
\item applications of dimension distortion in dynamics (especially holomorphic dynamics), Kleinian groups, geometric group theory, and analysis on fractals;
\item conformal dimensions and quasiconformal classification of self-affine Euclidean fractals;
\item the theory of quasisymmetric and Sobolev mappings in metric measure spaces.
\end{itemize}

\subsection{Basic definitions and notation}\label{subsec:basic-definitions-and-notation}

For real numbers $a$ and $b$, we write $a \wedge b := \min\{a,b\}$ and $a \vee b := \max\{a,b\}$.

Invertible linear mappings $f$ of $\R^n$ are in bijective correspondence with matrices $\bA \in \GL(n,\R)$. We denote the operator norm of a matrix $\bA \in M(n,\R)$ by $||\bA||:= \max\{|\bA x|:|x|=1\}$. This is the {\it maximal stretching factor} for the map $f(x) = \bA x$. When $\bA \in \GL(n,\R)$ we also make use of the {\it minimal stretching factor} $\ell(\bA) = \min\{|\bA x|:|x|=1\} > 0$. The determinant of $\bA$ is denoted $\det \bA$.

For sets $E,F$ in a metric space $(X,d)$ we write $\diam E = \sup_{x,y\in E} d(x,y)$ and $\dist(E,F) = \inf_{x\in E, y\in F} d(x,y)$. Write $\side(Q)$ for the side length of a cube $Q \subset \R^n$. The Lebesgue measure of a set $A \subset \R^n$ will be denoted $|A|$.

The {\it Hausdorff dimension} $\dim_H E$ of $E \subset \R^n$ is the infimum of $s>0$ so that for each $\eps>0$ there exists a cover $\{A_i\}$ of $E$ with $\sum_i (\diam A_i)^s < \eps$. Alternatively, $\dim_H E$ is the unique $s_0$ for which $\cH^s(E) = 0$ for all $s \in (s_0,n]$ and $\cH^s(E) = +\infty$ for all $s \in [0,s_0)$, where $\cH^s$ denotes the $s$-dimensional Hausdorff measure.

For bounded $E$, the {\it (upper) box-counting dimension} $\dim_B E$ is the infimum of $s>0$ for which there exists $C>0$ so that $N(E,r) \le C r^{-s}$ for all $r<\diam E$. Here $N(E,r)$ denotes the minimal number of sets in $\R^n$ of diameter $r$ needed to cover $E$. 

We denote by $\cM(\R^n)$ the collection of Radon measures in $\R^n$. The {\it $s$-energy} of $\nu\in\cM(\R^n)$ is $I_s(\nu):= \iint |x-y|^{-s} \, d\nu(x)\,d\nu(y)$. The traditional (volume growth) version of Frostman's lemma states that the Hausdorff dimension of a Borel set\footnote{The result is also true for analytic subsets.} $E \subset \R^n$ is equal to $\sup\{s:\exists\,\nu \in \cM(\R^n) \mbox{ s.t.\ } \nu(B(x,r)) \le r^s \, \forall \, x \in E, r>0 \}$. The energy version of Frostman's lemma asserts that $\dim_H E = \sup\{t:\exists\,\nu\in\cM(\R^n) \mbox{ s.t.\ } I_t(\nu) < \infty \}$. For proofs of these facts, see \cite[Chapter 8]{mat:gmt}.

A {\it self-similar iterated function system (IFS)} is a set of contractive similarities $\cF = \{\varphi_1,\ldots,\varphi_M\}$. The {\it invariant set} of $\cF$ is the unique nonempty compact set $K$ with $K = \bigcup_{j=1}^M \varphi_j(K)$. The existence and uniqueness of $K$ follows from Banach's contraction mapping principle applied to the map $F \mapsto \bigcup_{j=1}^M \varphi_j(F)$ acting on the {\it compacta hyperspace} $\cK(\R^n)$.
Hutchinson's theorem states that if $\cF$ satisfies the OSC,\footnote{$\cF$ satisfies the {\it open set condition (OSC)} if there exists $O$ open and bounded so that $\varphi_j(O) \subset O \, \forall \, j$ and $\varphi_j(O) \cap \varphi_k(O) = \emptyset \, \forall \, j \ne k$.} then $s=\dim_H K$ is the unique nonnegative solution to $\sum_{j=1}^M r_j^s = 1$. Here $r_j$ denotes the contraction ratio of $\varphi_j$. Moreover, $\cH^s \restrict K$ is an Ahlfors $s$-regular measure: there exist $c_1>0$ and $c_2<\infty$ so that $c_1 r^s \le \cH^s(B(x,r) \cap K) \le c_2 r^s$ for all $x \in K$ and $0<r<\diam K$. See, e.g., \cite[Theorem 4.14]{mat:gmt}.

\subsection{Acknowledgements}

Prior work of the author described in this survey was obtained while under the support of the U.S. National Science Foundation (NSF) and the Simons Foundation. In addition, the results on quasiconformal distortion of the Assouad spectrum were obtained during the author's service as an NSF Program Officer and under support of the NSF Independent Research/Development Program. Any opinion, findings, and conclusions or recommendations expressed in this material are those of the author and do not necessarily reflect the views of the National Science Foundation.

Figure \ref{fig:gaskets}(b) is reprinted with permission from the article \cite{tw:gasket}, where it appears as Figure 5(a). The unpublished Theorem \ref{th:ct-qc-assouad} relies on an observation communicated to the author and Chrontsios Garitsis by Dimitrios Ntalampekos. I thank him for permission to include the comment here.

The animation at {\tt https://publish.illinois.edu/jeremy-tyson/animations} was produced by University of Illinois at Urbana--Champaign students Dan Schultz and Nishant Nangia with support from the {\it Illinois Geometry Lab (IGL)}\footnote{now known as the {\it Illinois Mathematics Lab (IML)}} in the Department of Mathematics.

Over the years, I have benefited from valuable conversations on the subject of this article with numerous colleagues, including Kari Astala, Zolt\'an Balogh, Mario Bonk, Efstathios Chrontsios Garitsis, Jonathan Fraser, Hrant Hakobyan, Leonid Kovalev, Alex Rutar, Kevin Wildrick, and Jang-Mei Wu. Thanks are especially due to Efstathios Chrontsios Garitsis, Estibalitz Durand Cartagena, and Dmitrios Ntalampekos, who each provided useful feedback on an early draft of this paper. Any errors in the text, however, remain the author's responsibility.

\section{Quasiconformal mappings: history and background}\label{sec:qc-history-and-background}

Non-conformality of an orientation-preserving diffeomorphism $f:\Omega \to \Omega'$ between planar domains is quantified via the {\it Beltrami cofficient} $\mu = \mu_f:\Omega \to \C$, defined by
\begin{equation}\label{eq:beltrami}
f_{\overline{z}} = \mu \, f_z.
\end{equation}
The complex-valued quantity $\mu_f(z)$ measures the infinitesimal stretching and rotation of $f$ at $z \in \Omega$. In particular, $|\mu_f(z)|$ determines the infinitesimal relative stretching factor, and the diffeomorphism $f$ is said to be {\it $k$-quasiconformal}, $k<1$, if $k_f := ||\mu_f||_\infty \le k$. Geometrically, a map $f$ is quasiconformal if the image of every infinitesimal disc is an infinitesimal ellipse, subject to the constraint that the eccentricies of all such ellipses are uniformly bounded away from zero. When $\mu=0$, \eqref{eq:beltrami} reduces to the classical Cauchy--Riemann equations, whose diffeomorphic solutions are conformal.

The {\it maximal} and {\it minimal infinitesimal stretch factors} $L_f(z) = \lim_{r\to 0} r^{-1} L_f(z,r)$ and $\ell_f(z) = \lim_{r \to 0} r^{-1} \ell_f(z,r)$ are defined via $L_f(z,r) = \max_{|z-w|=r} |f(z)-f(w)|$ and $\ell_f(z,r) = \min_{|z-w|=r} |f(z)-f(w)|$. These values satisfy the identities $L_f(z) = \max_\theta |\partial_\theta f(z)| = |f_z(z)| + |f_{\overline{z}}(z)|$ and $\ell_f(z) = \min_\theta |\partial_\theta f(z)| = |f_z(z)| - |f_{\overline{z}}(z)|$.
The {\it (metric) dilatation} $H_f(z) := L_f(z)/\ell_f(z)$ of $f$ at $z$ is related to the Beltrami coefficient via $H_f(z) = \tfrac{1+|\mu_f(z)|}{1-|\mu_f(z)|}$. One also says that $f$ is {\it $K$-quasiconformal} if $H_f := \max_z H_f(z)\le K$. Note that $H_f = \tfrac{1+k_f}{1-k_f}$ or equivalently $k_f = \tfrac{H_f-1}{H_f+1}$. By an abuse of terminology, we declare a planar map $f$ to be either $K$-quasiconformal for $K \ge 1$ or $k$-quasiconformal for $0\le k < 1$ if the appropriate condition holds true.

While early work assumed $C^1$ regularity, Morrey \cite{mor:qc} established the existence of homeomorphic solutions $f:\Omega \to \Omega'$ to the equation \eqref{eq:beltrami} for arbitrary measurable data $\mu:\Omega \to \C$ with $||\mu||_\infty < 1$. This result now goes under the name of the {\it measurable Riemann mapping theorem}. Following important work of Ahlfors \cite{ahl:qc}, Mori \cite{mori:qc} and others in the 1950s, the essential analytic characteristics of these mappings were put in place. Such characteristics include local H\"older continuity, $L^2$ integrability of the partial derivatives, Lusin's condition (N) (preservation of the vanishing of Lebesgue area measure), a.e.\ positivity of the Jacobian determinant, and a suitable compactness theory. Particularly relevant for us is the fact that such maps preserve full Hausdorff dimension; if $\dim_H E = 2$, then also $\dim_H f(E) = 2$. In \cite{boj:higher}, Bojarski used Calder\'on--Zygmund singular integral machinery to solve \eqref{eq:beltrami}, and as a consequence obtained a fundamental regularity conclusion: the partial derivatives $f_z$ and $f_{\overline{z}}$ for any $k$-quasiconformal map $f$ lie in $W^{1,p}_\loc$ for some $p>2$.\footnote{In fact, the conclusion holds for any $p$ such that $k_f C_p < 1$, where $C_p$ denotes the operator norm of the {\it Beurling transform} $Tf(z) = -\tfrac1\pi \iint \tfrac{f(w)}{(w-z)^2} \, dA(w)$ acting on $L^p$. Existence of such $p$ stems from the continuity of $p \mapsto C_p$ and the fact that $T$ is an $L^2$ isometry.}

In higher dimensional Euclidean spaces, the close connections to complex analysis disappear and new methodology is required to develop a suitable and analogous theory. This program was advanced primarily through the efforts of Gehring and V\"ais\"al\"a starting in the 1960s. Building on ideas pioneered by Ahlfors in the two-dimensional case, Gehring \cite{geh:rings} and V\"ais\"al\"a \cite{vai:qc} developed the theory starting from a {\it geometric definition} of quasiconformality, based on quasi-preservation of the {\it conformal modulus} of a curve family. Nevertheless, this definition was shown to be equivalent to the {\it metric definition}, phrased in terms of a uniform bound on the metric dilatation $H_f(x) = \limsup_{r \to 0} L_f(x,r)/\ell_f(x,r)$
of a homeomorphism $f:\Omega \to \Omega'$ in $\R^n$, $n \ge 2$, at a point $x \in \Omega$. Such maps are known to enjoy suitable analytic regularity; they lie in $W^{1,n}_\loc(\Omega:\R^n)$ and satisfy the distortion inequality
\begin{equation}\label{eq:analytic-QC}
||Df(x)||^n \le K \, \det Df(x) \qquad \mbox{a.e.\ $x \in \Omega$.}
\end{equation}
We call a homeomorphism $f \in W^{1,n}_\loc(\Omega:\R^n)$ {\it (analytically) $K$-quasiconformal} if \eqref{eq:analytic-QC} holds true, and for simplicity we abbreviate quasiconformal as QC. Similar to the planar case, such maps were shown to be H\"older continuous, differentiable almost everywhere, have Jacobian determinant positive almost everywhere, and preserve full Hausdorff dimension. However, the question of higher integrability of the Jacobian (that is, $L^p$ integrability of $Df$ for some $p>n$) remained unresolved for some time. The solution to this problem by Gehring in 1973 \cite{geh:higher} begins our discussion of QC and Sobolev dimension distortion, and is deferred to the following section.

Restriction and extension theorems for QC maps feature prominently in the theory. Beurling and Ahlfors \cite{ba:extension} showed that each $k$-QC map $f:\Omega \to \Omega$ defined on the upper half-plane $\Omega = \{ z \in \C : \Imag(z) > 0 \}$ admits a homeomorphic extension to the closure of $\Omega$ whose restriction to the boundary, denoted $g:\R\to\R$, satisfies 
\begin{equation}\label{eq:M-condition}
\frac1M \le \frac{|g(x+t)-g(x)|}{|g(x)-g(x-t)|} \le M \qquad \forall \, x \in \R, t>0,
\end{equation}
with $M = M(k)$. The double inequality in \eqref{eq:M-condition} is known as the {\it $M$-condition}. Beurling and Ahlfors also showed that every homeomorphism $g:\R\to\R$ satisfying the $M$-condition for some $M<\infty$ extends to a $k$-QC map $f:\Omega \to \Omega$ with $k = k(M)$. 

Any QC map $f:\Omega \to \Omega$ of $\Omega = \{ x = (x_1,\ldots,x_{n+1}) \in \R^{n+1} : x_{n+1} > 0 \}$ extends to a homeomorphism of the closure of $\Omega$ whose restriction to $\R^n = \partial \Omega$ is a QC self-map of $\R^n$, and Tukia and V\"ais\"al\"a \cite{tv:extension} provided the higher-dimensional version of the Beurling--Ahlfors extension theorem by showing that every QC map $g:\R^n \to \R^n$ arises as the boundary values of a QC map $f:\Omega \to \Omega$. 

Each QC map $f$ between domains in $\R^n$, $n \ge 2$, is absolutely continuous with respect to Lebesgue measure and preserves full Hausdorff dimension ($\dim_H E = n$ implies $\dim_H f(E) = n$). However, Tukia \cite{tuk:haus-dim-qs} --- answering a question of Hayman and Hinkkanen --- showed that such conclusions need not hold true for maps of $\R$ satisfying the $M$-condition. In fact, the behavior with respect to Hausdorff dimension can be arbitrarily bad; there exist such maps $g:\R\to\R$ and sets $E \subset \R$ so that both $\dim_H (\R\setminus E)$ and $\dim_H E$ are arbitrarily small. See also Theorem \ref{th:qc-haus-dim-subsets-line}.

A homeomorphism $f:(X,d) \to (Y,d')$ between metric spaces is said to be {\it $\eta$-quasisymmetric} ($\eta$-QS), for some increasing homeomorphism $\eta:[0,\infty) \to [0,\infty)$, if $d'(f(x),f(y)) \le \eta(t) \, d'(f(x),f(z))$ whenever $x,y,z \in X$ and $t>0$ satisfy $d(x,y) \le t \, d(x,z)$. Such a map is {\it weakly $H$-quasisymmetric} ($H$-WQS), $H>0$, if $d'(f(x),f(y)) \le H \, d'(f(x),f(z))$ whenever $d(x,y) \le d(x,z)$. Every $\eta$-QS map is $H$-WQS with $H = \eta(1)$, and every $H$-WQS map between connected doubling spaces is $\eta$-QS for some $\eta$ depending only on $H$ \cite[Theorem 10.19]{hei:lams}. A homeomorphism $f:\Omega \to \Omega'$ between domains in $\R^n$, $n \ge 2$, is QC if and only if there exists $H$ so that $f$ is $H$-WQS in every ball $B(x,\tfrac12 \delta(x))$, where $\delta(x) := \dist(x,\partial\Omega)$. In particular, $f:\R^n \to \R^n$ is QC if and only if it is QS. Moreover, a homeomorphism $f:\R \to \R$ is QS (as defined in this paragraph) if and only if $f$ satisfies the $M$-condition.

\subsection{References}

The classic texts by Lehto and Virtanen \cite{lv:qc} and Ahlfors \cite{ahl:book} provide a detailed picture of planar QC mapping theory as of the early 1970s. The canonical reference for the higher-dimensional theory is V\"ais\"al\"a's monograph \cite{vai:book}, from which generations of researchers were trained. For a more recent perspective, the book by Gehring, Martin and Palka \cite{gmp:qc} is highly recommended. However, neither of these latter references discuss Gehring's higher integrability theorem or its consequences for dimension distortion. Iwaniec and Martin \cite{im:book} emphasize an analytic approach to mappings of bounded distortion in relation to non-linear analysis and elasticity theory; here one finds a treatment of higher integrability in the (more general) context of quasiregular maps. For a comprehensive analytic presentation of the planar theory of quasiconformal and quasiregular maps, see Astala, Iwaniec and Martin \cite{aim:qc}.

\section{Quasiconformal dimension distortion: the story begins}\label{sec:qc}

The systematic study of quasiconformal and Sobolev dimension distortion originates in two 1973 papers. The first \cite{geh:higher}, by Fred Gehring, contained the long-sought higher dimensional extension of Bojarski's higher integrability theorem from \cite{boj:higher}.

\begin{theorem}[Bojarski, $n=2$; Gehring, $n \ge 3$]\label{th:qc-higher-integrability}
For each $K \ge 1$ and $n \ge 2$, there exists $p = p(n,K) > n$ so that if $f:\Omega \to \Omega'$ is a $K$-quasiconformal mapping between domains in $\R^n$, then $f \in W^{1,p}_\loc(\Omega:\R^n)$.
\end{theorem}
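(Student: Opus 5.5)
The plan is to follow Gehring's route: a reverse H\"older inequality for the Jacobian $J_f = \det Df$, followed by a self-improvement lemma. Two standard properties of analytically $K$-QC maps will serve as inputs.

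The first is the distortion inequality \eqref{eq:analytic-QC}, namely $|Df|^n \le K J_f$ a.e. The second is the weak quasisymmetry of $f$ on balls $B(x,\tfrac12\delta(x))$ recalled in Section \ref{sec:qc-history-and-background}.

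\emph{Step 1: reverse H\"older inequality.} Fix a cube $Q$ with $2Q \Subset \Omega$. The aim is to show
\begin{equation*}
\frac{1}{|Q|}\int_Q J_f \le C \left( \frac{1}{|2Q|}\int_{2Q} J_f^{1/n} \right)^{n}, \qquad C = C(n,K).
\end{equation*}
Since $\int_Q J_f = |f(Q)|$ (change of variables, valid because QC maps satisfy Lusin (N)), the left side is comparable to $(\diam f(Q))^n/|Q|$. This uses the fact that $f(Q)$ is a quasiball, by weak quasisymmetry.

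To bound $\diam f(Q)$, I would use that $f$ maps the ring $2Q\setminus Q$ to a ring whose conformal modulus is bounded in terms of $n$ and $K$. The argument is Poincar\'e/Sobolev-type. Integrate $|Df|$ along a family of segments (or use the Sobolev embedding on spheres in the spirit of Gehring's oscillation lemma), and use quasisymmetry to get
\begin{equation*}
\diam f(Q) \lesssim \frac{1}{|2Q|^{(n-1)/n}}\int_{2Q} |Df| \cdot |Q|^{\ldots}.
\end{equation*}
In normalized form this reads
\begin{equation*}
\diam f(Q)/\side Q \lesssim \left(\frac{1}{|2Q|}\int_{2Q}|Df|^{q}\right)^{1/q}
\end{equation*}
for some $q<n$. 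Gehring obtains the exponent $1$ (i.e.\ $J_f^{1/n}$) via an oscillation estimate on spheres. Any exponent $q<n$ suffices.

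Combining this with $|Df|^n \le K J_f$ converts the estimate into the displayed reverse H\"older inequality for $g = |Df|^n$: the $L^1$ average of $g$ over $Q$ is controlled by the $L^{q/n}$ average over $2Q$, where $q/n<1$.

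\emph{Step 2: Gehring's lemma.} Next I would prove the self-improving property. If a nonnegative $g \in L^1_\loc$ satisfies, for all cubes with $2Q\subset\Omega$,
\begin{equation*}
\frac{1}{|Q|}\int_Q g \le C\left(\frac{1}{|2Q|}\int_{2Q} g^{s}\right)^{1/s}
\end{equation*}
for some $s<1$, then $g \in L^{1+\eps}_\loc$ with $\eps=\eps(n,C,s)>0$.

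The proof runs through a Calder\'on--Zygmund stopping-time decomposition at each level $t$ above the average of $g$. On each stopping cube the reverse H\"older hypothesis yields a distribution inequality of the form
\begin{equation*}
\int_{\{g>t\}} g \le A\, t^{1-s}\int_{\{g>\beta t\}} g^{s}.
\end{equation*}
I would first prove it for truncations $\min(g,M)$ so that all integrals are finite, then multiply by $t^{\eps-1}$ and integrate in $t$ via Fubini. This gives $\int g^{1+\eps} \le \tfrac12\int g^{1+\eps} + C'$ for $\eps$ small, and the first term is absorbed. Letting $M\to\infty$ finishes the lemma.

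\emph{Step 3: conclusion.} Applying Step 2 to $g = |Df|^n$ gives $|Df| \in L^{n(1+\eps)}_\loc$, so $p = n(1+\eps) > n$ depends only on $n$ and $K$.

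The main obstacle is Step 1: obtaining a geometric estimate of $\diam f(Q)$ by an average of $|Df|^q$ with $q<n$, where the constant depends only on $K$ and $n$. Here quasisymmetry (equivalently the modulus definition) must be used carefully. Step 2 is technical but standard once the absorption via truncation is handled.
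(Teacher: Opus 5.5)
Your proposal is correct and follows essentially the route the paper points to: a reverse H\"older inequality for the Jacobian, obtained from quasisymmetry, the line-integral bound on $\diam f(Q)$, and $|Df|^n \le K J_f$, is upgraded by Gehring's self-improvement lemma to $|Df|\in L^p$ locally for some $p=p(n,K)>n$. The paper credits the planar case to Bojarski's Beurling-transform argument, but your Gehring-type argument covers $n=2$ as well.
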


As a consequence of Bojarski's higher integrability theorem, Gehring and V\"ais\"al\"a \cite{gv:hausdorff} obtained estimates for the quasiconformal distortion of the Hausdorff dimension of planar sets. The extension to higher dimensions was presented in \cite{geh:higher} as a corollary of Gehring's corresponding higher integrability theorem in all dimensions.

\begin{theorem}[Gehring--V\"ais\"al\"a, $n=2$; Gehring, $n \ge 3$]\label{th:qc-haus-dim-distortion}
Let $f: \Omega \to \Omega'$ be a $K$-quasiconformal map between domains in $\R^n$, $n \ge 2$, and $E \subset \Omega$ a set with $s = \dim_H E \in (0,n)$. Then there exist $c_1$ and $c_2$, depending on $K$, $n$, and $s$ so that
\begin{equation}\label{eq:qc-haus-dim-distortion}
0<c_1 \le \dim_H f(E) \le c_2 < n.
\end{equation}
\end{theorem}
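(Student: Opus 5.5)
The plan is to prove only the upper bound $\dim_H f(E) \le c_2 < n$ directly, and to get the lower bound by applying that upper bound to $f^{-1}$. The inverse $f^{-1}$ is also $K'$-quasiconformal with $K' = K'(n,K)$; for the analytic definition one can take $K' = K^{n-1}$. Since $\dim_H$ is countably stable, we may assume that $E$ is compactly contained in a ball $B$ with $2B \subset \Omega$. By Theorem \ref{th:qc-higher-integrability}, $|Df| \in L^p(B)$ with $p = p(n,K) > n$.

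For the upper bound, fix $t > s$, with $t<n$, and cover $E$ by Whitney-type cubes $Q_i$ with $\sum_i \side(Q_i)^t < \eps$. Quasisymmetry in balls (the $H$-WQS property of $f$ on $B(x,\tfrac12\delta(x))$) gives
\begin{equation*}
\diam f(Q_i)^n \lesssim |f(Q_i)| = \int_{Q_i} \det Df,
\end{equation*}
and hence $\diam f(Q_i) \lesssim \bigl(\int_{Q_i} \|Df\|^n\bigr)^{1/n}$. By H\"older's inequality,
\begin{equation*}
\int_{Q_i} \|Df\|^n \le \Bigl(\int_{Q_i}\|Df\|^p\Bigr)^{n/p} |Q_i|^{1-n/p}.
\end{equation*}
Therefore $\diam f(Q_i)^\tau \lesssim a_i^{\tau/p}\, \side(Q_i)^{\tau(1-n/p)}$, where $a_i = \int_{Q_i}\|Df\|^p$. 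Apply H\"older once more to the sum, with exponents $p/\tau$ and $p/(p-\tau)$, which requires $\tau<p$. This gives
\begin{equation*}
\sum_i \diam f(Q_i)^\tau \lesssim \Bigl(\sum_i a_i\Bigr)^{\tau/p} \Bigl(\sum_i \side(Q_i)^{\tau(p-n)/(p-\tau)}\Bigr)^{(p-\tau)/p}.
\end{equation*}
The first factor is bounded by $\|Df\|_{L^p(2B)}^\tau$, provided the cubes have bounded overlap. Choose $\tau$ so that $\tau(p-n)/(p-\tau) = t$, that is, $\tau = pt/(p-n+t)$. Then the sum tends to $0$ with $\eps$, so $\dim_H f(E) \le pt/(p-n+t)$. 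Letting $t \downarrow s$ gives
\begin{equation*}
\dim_H f(E) \le c_2 := \frac{ps}{p-n+s},
\end{equation*}
and $c_2 < n$ exactly when $s < n$, because $p>n$. Applying the same estimate to $f^{-1}$ with exponent $p' = p(n,K')$ on the set $f(E)$ shows $s \le p'\dim_H f(E)/(p'-n+\dim_H f(E))$. Solving for $\dim_H f(E)$ gives
\begin{equation*}
\dim_H f(E) \ge c_1 := \frac{(p'-n)s}{p'-s} > 0.
\end{equation*}

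The main obstacle is the covering step: the sets in a near-optimal cover must be replaced by cubes (or balls) that are small relative to $\dist(E,\partial B)$, and that reduced cover must have bounded overlap so that $\sum_i a_i$ stays controlled. The first thing I would try is the following. Replace each set $A_i$ by a ball of comparable radius centered on $E$. Then apply a Vitali-type selection, or sort the balls into dyadic-scale classes, and use the finite overlap of dyadic cubes within each generation together with the $5r$-covering lemma. The factor lost in enlarging the balls is absorbed into the constants via the local quasisymmetry of $f$. If bounded overlap is hard to secure across scales, I would instead bound each scale class separately and sum geometrically. This works because the exponent $t$ may be taken slightly larger than needed, which leaves room to absorb a logarithmic loss in the number of scales.
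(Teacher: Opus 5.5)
Your proposal is correct and is essentially the paper's proof: you localize to a ball where $f$ is weakly quasisymmetric, bound $\diam f(Q_i)^n \lesssim |f(Q_i)| \le \int_{Q_i}\|Df\|^n$, apply H\"older twice with exponent $pt/(p-n+t)$ (the paper's $t'$), and obtain the lower bound by applying the upper bound to $f^{-1}$, which is $K^{n-1}$-quasiconformal. The covering step you flag as the main obstacle is handled in the paper simply by working with dyadic cubes: a set of diameter $d$ meets at most $2^n$ dyadic cubes of side comparable to $d$, and passing to the maximal cubes of a dyadic cover gives a non-overlapping cover with no larger sum, so $\sum_i a_i \le \int_U \|Df\|^p$ without any Vitali-type selection or scale-by-scale summation.
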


Gehring and V\"ais\"al\"a also showed that \eqref{eq:qc-haus-dim-distortion} is sharp in the following sense: {\it for each $0<s,s'<n$ there exist compact sets $E$ and $E'$ in $\R^n$ with $\dim_H E = s$ and $\dim_H E' = s'$ and a QC map $f:\R^n \to \R^n$ so that $f(E) = E'$.} It is worthwhile to recall the construction, as it illustrates techniques which will resurface later. 

\begin{example}\label{ex:gv}
Fix $0<s,s'<n$. Let $Q = [0,1]^n$ and $Q_1,\ldots,Q_M \subset Q$ be disjoint, closed cubes satisfying $\sum_{i=1}^M \side(Q_i)^s = 1$.  Similarly, let $Q_1',\ldots,Q_M' \subset Q$ be disjoint, closed cubes satisfying $\sum_{i=1}^M \side(Q_i')^{s'} = 1$. We consider IFS $\cF := \{\varphi_1,\ldots,\varphi_M\}$ and $\cF' := \{\varphi_1',\ldots,\varphi_M'\}$ comprised of similarity maps $\varphi_i,\varphi_i'$ so that $\varphi_i(Q) = Q_i$ and $\varphi_i'(Q) = Q_i'$ for all $i$. Denote by $E$, resp.\ $E'$, the corresponding invariant sets. By Hutchinson's formula, $\dim_H E = s$ and $\dim_H E' = s'$. 

We now construct a QC map $f: \R^n \to \R^n$ with $f(E) = E'$. First, define $f_1:\R^n \to \R^n$ as follows: $f_1(x) = x$ for $x \not\in Q$, $f_1(x) = \varphi_{i}' \circ \varphi_i^{-1}(x)$ for each $x \in Q_i$ and $i=1,\ldots,M$, and $f_1$ is extended to the interior $U$ of $Q \setminus \bigcup_{i=1}^M Q_i$ in either a smooth or a piecewise linear fashion. It is easy to see that $f_1$ is $K$-QC for some fixed $K \ge 1$.\footnote{The value of $K$ depends on how $f_1$ is extended to $U$.} Moreover, $H_f(x) = 1$ provided $x \not\in Q$ or $x \in Q_i$ for some $i$. Next, define $f_2:\R^n \to \R^n$ by conjugating with the defining similarity maps: $f_2(x) = f_1(x)$ on $\R^n \setminus \bigcup_{i=1}^M Q_i$, and $f_2(x) = \varphi_i' \circ f_1 \circ {\varphi_i}^{-1}(x)$ for $x \in Q_i$. Then $f_2$ is again $K$-QC for the same $K$, and $H_f(x) = 1$ if $x \in f_j(Q_i)$ for some $i,j$. Continuing in this fashion, one constructs $K$-QC maps $f_m$ so that $f_m(\varphi_{i_1} \circ \cdots \circ \varphi_{i_m}(Q)) = \varphi_{i_1}' \circ \cdots \circ \varphi_{i_m}'(Q)$ for all words $(i_1 \cdots i_m) \in \{1,\ldots,M\}^m$. By a standard convergence theorem, there exists a limit map $f:\R^n \to \R^n$ which is again $K$-QC and maps $E$ onto $E'$.
\end{example}

\begin{example}\label{ex:snowflake}
The construction of the QC map in Example \ref{ex:gv} is made easier by the fact that the set $E$ satisfies a strong version of total disconnectivity: any two pieces $E \cap Q_i$ and $E \cap Q_j$, $i \ne j$, have mutual distance bounded from below by a fixed multiple of $\diam E$. Implementing a similar construction for connected invariant sets is more challenging. We illustrate by describing a QS equivalence between $[0,1] \subset \R^2$ and the von Koch snowflake. Recall that the {\it von Koch curve} $K \subset \R^2$ (Figure \ref{fig:vk}) can be realized as the invariant set for a self-similar IFS $\cF'$ with the OSC, consisting of four contractions each with scale factor $\tfrac13$. Then $\dim K = \tfrac{\log 4}{\log 3}$.

\begin{figure}[b]
\includegraphics[scale=.175]{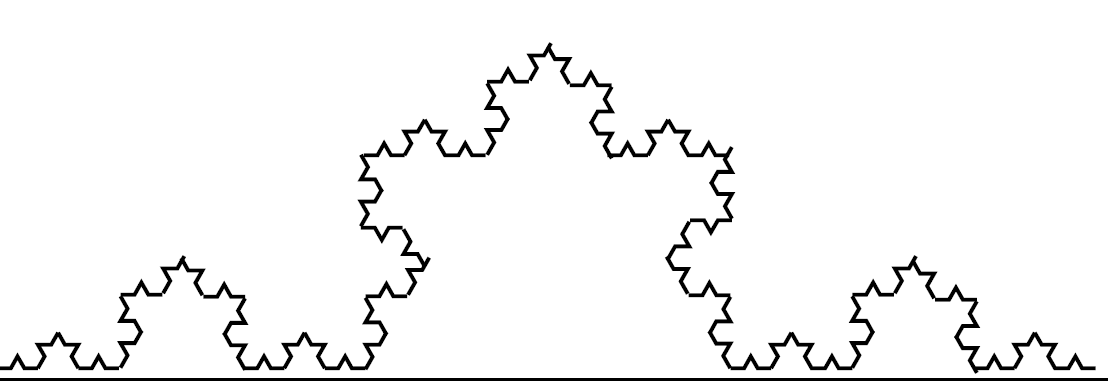}
%
%
\caption{The von Koch curve}
\label{fig:vk}
\end{figure}

To construct a QS map from $[0,1]$ to $K$, represent $[0,1]$ as the invariant set for an IFS $\cF$ consisting of four contractions each with scale factor $\tfrac14$. There exists a natural order-preserving identification between the elements of $\cF$ and $\cF'$. This identification descends to all levels of the IFS construction and induces a homeomorphism $f:[0,1] \to K$. Note that, for any $m \in \N$, $f$ maps an $m$th level self-similar piece of $[0,1]$ of diameter $4^{-m}$ onto an $m$th level piece of $K$ of diameter $3^{-m}$. In fact, setting $\eps := \tfrac{\log 3}{\log 4}$, one can verify that $f:([0,1],d_E^\eps) \to (K,d_E)$ is bi-Lipschitz. Since the identity map from $[0,1]$ to $([0,1],d_E^\eps)$ is QS, it follows that $f:[0,1] \to K$ is QS.

We also remark that a generalized version of the Beurling--Ahlfors extension theorem ensures that the map $f:[0,1] \to K$ extends to a QC self-map of $\R^2$. Thus $K$ is a {\it quasi-line segment}, i.e., a QC image of $[0,1]$.  The book by Gehring and Hag \cite{gh:quasidisk} is highly recommended for a detailed discussion of quasidiscs (quasiconformal images of the unit disc in $\C$), including numerous characterizations for these objects.
\end{example}

Example \ref{ex:gv} indicates that the constants $c_1(K,n,s)$ and $c_2(K,n,s)$ in \eqref{eq:qc-haus-dim-distortion} must satisfy $c_1(K,n,s) \to 0$ and $c_2(K,n,s) \to n$ as $K \to \infty$. Semi-explicit expressions for these constants can be given by making use of the conclusion of Theorem \ref{th:qc-higher-integrability}.

\begin{definition}
For $K \ge 1$ and $n \ge 2$, the {\it Sobolev higher integrability exponent} $p^\Sob(n,K)$ is the supremum of those values $p>n$ for which the conclusion of Theorem \ref{th:qc-higher-integrability} holds true for each $K$-quasiconformal map between domains in $\R^n$.
\end{definition}

With this notation in place, the estimates in \eqref{eq:qc-haus-dim-distortion} can be stated in the following form: for $E \subset \Omega$ with $\dim_H E = s$ and $\dim_H f(E) = s'$,
\begin{equation}\label{eq:qc-haus-dim-distortion-b}
\alpha_n(p^\Sob(n,K)) \left( \frac1s - \frac1n \right) \le \frac1{s'} - \frac1n \le \alpha_n(p^\Sob(n,K^{n-1}))^{-1} \left( \frac1s - \frac1n \right).
\end{equation}
Here for $p>n$, we have set $\alpha_n(p) = 1 - \tfrac{n}{p}$.

We next sketch the argument of Gehring--V\"ais\"al\"a for the proof of \eqref{eq:qc-haus-dim-distortion-b}. The $n$-dimensional version provided here comes from Gehring \cite{geh:higher}. Later in this survey we indicate other proofs for this result.

\begin{proof}[Proof of \eqref{eq:qc-haus-dim-distortion-b}]
Let $f$ and $E$ be as stated. We may assume without loss of generality that $E \subset V := B(x_0,\tfrac12\delta(x_0))$ for some $x_0 \in \Omega$. Recall that  $f|_V$ is weakly $H$-QS for some $H>0$ depending on $n$ and $K$. Fix a domain $U \subset V$ so that $E \subset U$.

Let $p \in (n,p^\Sob(n,K))$ so that $f \in W^{1,p}(U:\R^n)$. Fix $t \in (\dim_H E,n)$ so that $\cH^t(E) = 0$. Define $t' \in (0,n)$ by the formula $(1-\tfrac{n}{p})(\tfrac{1}{t}-\tfrac{1}{n}) = \tfrac{1}{t'}-\tfrac{1}{n}$. We will show that $\dim_H f(E) \le t'$; letting $t \searrow \dim_H E$ and $p \nearrow p^\Sob(n,K)$ completes the proof.

For $\eps>0$, cover $E$ by dyadic cubes $(Q_i)$ such that $\sum_i \side(Q_i)^t < \eps$. Denote by $x_i$ the center of $Q_i$ and set $r_i = \side(Q_i)$. Since $f|_U$ is uniformly continuous, we have that $\diam f(Q_i) < \eps'$ for some $\eps'(\eps)\to 0$. Set $L_i = L_f(x_i,r_i)$ and $\ell_i = \ell_f(x_i,r_i)$. The weak $H$-quasisymmetry of $f|_U$ implies that $L_i \le H \ell_i$ for all $i$. We estimate
\begin{equation}\label{eq:proof-1}
\diam f(Q_i) \le 2 L_i \le 2H \ell_i \le C(n,H) |f(Q_i)|^{1/n}
\end{equation}
and
\begin{equation}\label{eq:proof-2}
|f(Q_i)| = \int_{Q_i} Jf \le \int_{Q_i} ||Df||^n \le \left( \int_{Q_i} ||Df||^p \right)^{n/p} \, |Q_i|^{1-n/p}.
\end{equation}
Using \eqref{eq:proof-1}, \eqref{eq:proof-2} and H\"older's inequality, we bound $\sum_i \diam f(Q_i)^{t'}$ from above by
\begin{equation}\begin{split}\label{eq:proof-3}
C(n,p,H,t') \left( \sum_i |Q_i|^{t'(\tfrac1n-\tfrac1p)(\tfrac{p}{p-t'})} \right)^{1-t'/p} \, \left( \int_{\cup_i Q_i} ||Df||^p \right)^{t'/p}.
\end{split}\end{equation}
From the choice of $t'$ we have $t'(\tfrac1n-\tfrac1p)(\tfrac{p}{p-t'}) = \tfrac{t}{n}$ and hence the sum in \eqref{eq:proof-3} equals $\sum_i |Q_i|^{t/n} = \sum_i r_i^t$. Thus $\cH^{t'}_{\eps'}(f(E)) \le \sum_i (\diam f(Q_i))^{t'}  \le C \eps^{1-t'/p} \, ||Df||_{L^p(U)}^{t'}$ with $C = C(n,p,H,t')$, and since $f \in W^{1,p}(U)$ and $\eps'(\eps) \stackrel{\eps \to 0}\longrightarrow 0$, we conclude that $\dim_H f(E) \le t'$ as desired. This establishes the left hand inequality in \eqref{eq:qc-haus-dim-distortion-b}; the right hand inequality follows since the inverse of a $K$-QC map in $\R^n$ is $K^{n-1}$-QC.
\end{proof}

The exact value of $p^\Sob(n,K)$ remains a challenging open problem when $n \ge 3$, but $p^\Sob(2,K)$ was computed in a groundbreaking result of Astala \cite{ast:2d} in 1994.

\begin{theorem}[Astala]\label{th:qc-higher-integrability-and-haus-dim-distortion-2d}
For each $K \ge 1$, we have $p^\Sob(2,K) = \tfrac{2K}{K-1}$. Consequently, if $E \subset \Omega$ and $f:\Omega \to \Omega'$ is a $K$-quasiconformal mapping between planar domains and $E \subset \Omega$ satisfies $\dim_H E = s$ and $\dim_H f(E) = s'$, then
\begin{equation}\label{eq:qc-haus-dim-distortion-b-2d}
\frac1K \left( \frac1s - \frac12 \right) \le \frac1{s'} - \frac12 \le K \left( \frac1s - \frac12 \right).
\end{equation}
\end{theorem}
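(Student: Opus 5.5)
The second assertion follows from the first together with \eqref{eq:qc-haus-dim-distortion-b}, so I would split the proof into three parts: the explicit value of $p^\Sob(2,K)$ (an upper bound and a lower bound), followed by the dimension estimate.

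\emph{The value $\tfrac{2K}{K-1}$ is an upper bound for $p^\Sob(2,K)$.} I would use the radial stretch $f(z) = z|z|^{1/K-1}$. It is $K$-quasiconformal, since its radial and angular stretches are in ratio $1/K$. Its derivative satisfies $\|Df(z)\| \simeq |z|^{1/K-1}$. In polar coordinates, $\int_{B(0,1)} |z|^{p(1/K-1)}\,dA < \infty$ holds if and only if $p(1-1/K) < 2$, that is, $p < \tfrac{2K}{K-1}$. So no larger exponent can work.

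\emph{The value $\tfrac{2K}{K-1}$ is a lower bound (the hard part).} I would first prove Astala's area distortion estimate: for a $K$-QC map $f$ of $\C$, normalized to be conformal outside $\mathbb D$ with $f(z) = z + O(1/z)$, and any Borel $E \subset \mathbb D$,
\[
|f(E)| \le C(K)\,|E|^{1/K}.
\]
I would proceed as follows.
\begin{itemize}
\item Embed $f$ in a holomorphic motion. With $\mu = \mu_f$ and $k = \tfrac{K-1}{K+1}$, let $f_\lambda$ be the normalized solution of $\partial_{\overline z} f_\lambda = (\lambda\mu/k)\,\partial_z f_\lambda$ for $|\lambda|<1$. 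By the measurable Riemann mapping theorem with analytic dependence on parameters, $\lambda \mapsto f_\lambda(z)$ is holomorphic, $f_0 = \mathrm{id}$, and $f_k = f$.
\item Reduce to $E$ a finite union of disjoint discs $B_i = B(z_i,r_i)$. By a further reduction (redefining $\mu$ to be $0$ on $\bigcup B_i$), arrange that each $f_\lambda$ is conformal on the discs. By Koebe distortion, $|f_\lambda(B_i)| \simeq |f_\lambda'(z_i)|^2 r_i^2$, and $\log f_\lambda'(z_i)$ is holomorphic in $\lambda$.
\item Consider the holomorphic family of weights $\lambda \mapsto \sum_i (f_\lambda'(z_i))^{2}\,r_i^2$. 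The function $\lambda \mapsto \log \sum_i |f_\lambda'(z_i)|^2 r_i^2$ is subharmonic; more precisely, its dependence on the exponent pair is log-convex in the thermodynamic (pressure) sense. At $\lambda=0$ it equals $\sum r_i^2 \simeq |E|$. It is bounded above by $|f_\lambda(\mathbb D)| \lesssim 1$ for every $\lambda$.
\item Apply Harnack's inequality on the disc $|\lambda|<1$ to the nonpositive harmonic majorant, evaluated at $\lambda = k$. This yields the interpolation
\[
\log|f(E)| \le \tfrac{1-k}{1+k}\log|E| + C,
\]
and $\tfrac{1-k}{1+k} = \tfrac1K$.
\end{itemize}
Executing this step rigorously is the main obstacle. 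The difficulty is making the Harnack/log-convexity interpolation work with uniform constants across arbitrary disc configurations. To handle it, I would first try the formulation in terms of the variational (Bowen-type) principle for $\sum_i |f_\lambda'(z_i)|^t r_i^t$ as $t$ varies.

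Given the area estimate, I would convert it into integrability in two steps.
\begin{itemize}
\item Apply it to $E = \{J_f > s\}\cap \mathbb D$. Since $|f(E)| \ge s|E|$, this gives $s|E| \le C|E|^{1/K}$, so $|E| \lesssim s^{-K/(K-1)}$ and $J_f \in$ weak-$L^{K/(K-1)}_\loc$.
\item Then $\|Df\|^2 \le K J_f$ gives $Df \in L^p_\loc$ for all $p < \tfrac{2K}{K-1}$.
\end{itemize}
A general QC map between domains reduces to this normalized case by factoring off a conformal map, that is, by localizing $\mu$ to a disc and using Stoilow factorization.

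\emph{The dimension estimate.} Insert $p = p^\Sob(2,K) = \tfrac{2K}{K-1}$ into \eqref{eq:qc-haus-dim-distortion-b}. Then $\alpha_2(p) = 1 - \tfrac2p = \tfrac1K$. In the plane $K^{n-1} = K$, and $f^{-1}$ is again $K$-QC. This gives exactly \eqref{eq:qc-haus-dim-distortion-b-2d}.
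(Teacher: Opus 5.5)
Your overall architecture matches what the paper indicates. The radial stretch $z|z|^{1/K-1}$ gives $p^\Sob(2,K)\le\tfrac{2K}{K-1}$, Astala's holomorphic-motion argument gives the reverse inequality, and \eqref{eq:qc-haus-dim-distortion-b} with $\alpha_2(\tfrac{2K}{K-1})=\tfrac1K$ then yields the dimension bounds. The outer steps are correct, including the weak-type passage from $|f(E)|\le C(K)|E|^{1/K}$ to $Df\in L^p_\loc$ for $p<\tfrac{2K}{K-1}$.

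The gap is the interpolation step that is supposed to produce the area estimate. Write $\phi_i(\lambda)=f_\lambda'(z_i)\,r_i$ and $u=\log\sum_i|\phi_i|^2$. Then $u$ is only subharmonic, and Harnack's inequality does not apply to it. Passing to a harmonic majorant $h\ge u$ goes the wrong way. The bound $u(k)\le h(k)\le(1-\tfrac1K)C+\tfrac1K h(0)$ is useful only if $h(0)$ is comparable to $u(0)=\log\sum_i r_i^2$. Nothing in your argument gives that: $h(0)$ is an average of boundary values, and subharmonicity lets it exceed $u(0)$ by an arbitrary amount. Applying Harnack termwise to the harmonic functions $\log|\phi_i|^2$ does not help either. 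It yields only $\sum_i|\phi_i(k)|^2\lesssim\sum_i r_i^{2/K}$, which for $N$ equal discs of total area $\delta$ is of order $N^{1-1/K}\delta^{1/K}$, unbounded in $N$. So as written the argument does not give $|f(E)|\lesssim|E|^{1/K}$.

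What is needed is a harmonic \emph{minorant} of $u$ that touches $u$ at $\lambda=k$ and obeys the same upper bound. This is exactly the variational principle you allude to but do not use.
Freeze the weights at the target parameter, $p_i=|\phi_i(k)|^2/\sum_j|\phi_j(k)|^2$, and set $\Phi(\lambda)=\sum_i p_i\log\bigl(|\phi_i(\lambda)|^2/p_i\bigr)$. Each $\phi_i$ is holomorphic and zero-free, so $\Phi$ is harmonic. By Jensen's inequality, $\Phi\le u\le C$ on the whole parameter disc, with equality at $\lambda=k$. Also $\Phi(0)=\sum_i p_i\log(r_i^2/p_i)\le\log\sum_i r_i^2$. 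Harnack applied to $C-\Phi\ge 0$ then gives $u(k)=\Phi(k)\le(1-\tfrac1K)C+\tfrac1K\log\sum_i r_i^2$.

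Running the same argument with the density $|f_k'|^2/|f_k(E)|$ on $E$ and $|f_\lambda(E)|=\int_E|f_\lambda'|^2$ avoids Koebe distortion entirely. That is useful, because your two-sided comparison $|f_\lambda(B_i)|\simeq|f_\lambda'(z_i)|^2r_i^2$ needs conformality on a larger concentric disc.

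Finally, the reduction ``set $\mu=0$ on $\bigcup B_i$'' changes the map, so you must recover the original $f$. Write $f=g\circ h$ with $\mu_h=\mu\chi_{\C\setminus E}$. Then bound $|g(F)|\le C(K)|F|$ for the principal map $g$, whose Beltrami coefficient is supported on $F=h(E)$; this follows from the $L^2$-isometry of the Beurling transform.
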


Astala's proof uses the connection between quasiconformality and holomorphic motions characteristic of the planar case, along with the thermodynamic formalism for dimensions of invariant sets of conformal dynamical systems. By a careful analysis of holomorphically varying self-similar Cantor sets, inspired by the construction in Example \ref{ex:gv}, Astala also shows that the estimates in \eqref{eq:qc-haus-dim-distortion-b-2d} are sharp. To see that the value of $p^\Sob(2,K)$ is sharp, it suffices to note that the $K$-QC radial stretch map $f(z) = |z|^{1/K-1}z$ lies in the local Sobolev space $W^{1,p}_\loc(\C:\C)$ for all $p<\tfrac{2K}{K-1}$.

Astala's paper \cite{ast:2d} has had a profound influence on the further development of the theory of planar quasiconformal maps. As of the completion of this survey, MathSciNet listed over 200 papers referencing \cite{ast:2d}. We highlight several key strands of later research directly related to the estimates in \eqref{eq:qc-haus-dim-distortion-b-2d}. Building on earlier work of Astala--Clop--Mateu--Orobitg--Uriarte-Tuero \cite{acmout:distortion}, Lacey, Sawyer and Uriarte-Tuero \cite{lsut:borderline} established a borderline absolute continuity estimate associated to \eqref{eq:qc-haus-dim-distortion-b-2d}. Namely, if $E \subset \Omega \subset\C$ has $\cH^s(E) = 0$ for some $0<s<2$ and $f$ is $K$-QC, then $\cH^{s'}(f(E)) = 0$ for $s'$ such that $\tfrac1K(\tfrac1s-\tfrac12) = \tfrac1{s'}-\tfrac12$.

While Astala's original work established sharpness of the estimates in \eqref{eq:qc-haus-dim-distortion-b-2d} for all $s \in (0,2)$, the relevant examples involve self-similar Cantor sets. For $1$-dimensional sets $E \subset \C$, the upper bound for $\dim_H f(E)$ in \eqref{eq:qc-haus-dim-distortion-b-2d} takes the form
\begin{equation}\label{eq:qc-bound-quasilines-0}
\dim_H f(E) \le \frac{2K}{K+1};
\end{equation}
expressed in terms of the complex dilatation parameter $k = \tfrac{K-1}{K+1}$ \eqref{eq:qc-bound-quasilines-0} reads
\begin{equation}\label{eq:qc-bound-quasilines-1}
\dim_H f(E) \le 1 + k.
\end{equation}
It was long known, however, that the estimate in \eqref{eq:qc-bound-quasilines-1} was not best possible for quasilines, i.e., the case $E = \R$. Becker and Pommerenke \cite{bp:quasicircles} proved the estimate 
\begin{equation}\label{eq:qc-bound-quasilines-2}
\dim_H f(\R) \le 1 + C k^2
\end{equation}
with $C = 37$, and Smirnov \cite{sm:quasilines} answered a longstanding question by establishing \eqref{eq:qc-bound-quasilines-2} with $C = 1$.

\begin{theorem}[Smirnov]\label{th:haus-dim-quasilines}
Let $f: \C \to \C$ be $k$-quasiconformal for some $k<1$. Then $\dim_H f(\R) \le 1+k^2$.
\end{theorem}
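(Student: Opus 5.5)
The plan is to follow Smirnov's strategy. It has three ingredients: embed $f$ in a holomorphic motion, use a symmetry trick so that the motion fixes the real line for real parameters, and then apply an Astala-type harmonic-measure/interpolation estimate in the parameter disc.

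\emph{Step 1 (reduction and symmetrization).} We would first reduce to the case where the Beltrami coefficient $\mu=\mu_f$ is supported in the lower half-plane. Write $f=g\circ h$, where $h$ is a $k$-QC self-map of the upper half-plane extended by reflection, so that $h(\R)=\R$. Then $g$ has the same $\|\mu\|_\infty\le k$ and carries the dilatation only off the upper side, and $f(\R)=g(\R)$. Next we replace $\mu$ by an \emph{antisymmetric} coefficient $\tilde\mu$ defined by
\[
\tilde\mu(z)=\mu(z) \text{ for } \Imag z<0, \qquad \tilde\mu(z)=-\overline{\mu(\overline z)} \text{ for } \Imag z>0,
\]
still with norm $\le k$. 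The key claim is that the normalized solution $\tilde f$ of the Beltrami equation with coefficient $\tilde\mu$ satisfies $\tilde f(\R)=g(\R)$ up to a QC map conformal on the relevant side, so that the two curves have equal dimension. We expect this to follow from uniqueness in the measurable Riemann mapping theorem applied to the two solutions on the upper side.

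\emph{Step 2 (holomorphic motion).} For $\lambda$ in the unit disc $\mathbb D$, set $\mu_\lambda=(\lambda/k)\tilde\mu$ and let $f_\lambda$ be the normalized solution. Then $\lambda\mapsto f_\lambda(z)$ is holomorphic, so this is a holomorphic motion with $f_k=\tilde f$. The antisymmetry gives the crucial extra information: for \emph{real} $\lambda$, the map $\overline{f_\lambda(\overline z)}$ solves the Beltrami equation with the same coefficient. Hence $f_\lambda$ commutes with complex conjugation, and $f_\lambda(\R)=\R$ has dimension exactly $1$ along the whole diameter $(-1,1)$.

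\emph{Step 3 (the dimension estimate, main obstacle).} Following Astala's method, we fix a covering of a piece of $f_k(\R)$ by a disjoint packing of discs $B(z_j,r_j)$ with weights $p_j$. Using Koebe distortion, the quantities $\log$ of the image radii $\log|f_\lambda'|$-scales become harmonic functions $h_j(\lambda)$ on $\mathbb D$, up to bounded error. The dimension is controlled by the pressure-type quantity
\[
P(\lambda,t)=\log\sum_j p_j^{\,1-t}\,e^{t h_j(\lambda)},
\]
and we need a bound for $t$ near $1$ at $\lambda=k$ given that it is controlled on the real diameter. Astala's convexity/Harnack argument alone yields only $1+k$. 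The improvement to $1+k^2$ requires using that the real diameter is a whole geodesic where the dimension equals $1$: the relevant positive (super)harmonic function vanishes to first order along $(-1,1)$. Its value at $\lambda$ is then controlled by $(\Imag\lambda)^2$-type quantities, and in the hyperbolic geometry a point at distance $k$ from the center sees the diameter with a quadratic gain. Making this rigorous is the main obstacle. We would first try Smirnov's variational principle: express $1/\dim-\tfrac12$, or more precisely the quadratic form in the parameter, as an $L^2$ (Hilbert space) norm of the holomorphic derivative $\partial_\lambda\log f_\lambda'$. The symmetry forces the first-order term to be real on the diameter, and the Schwarz lemma in the disc then gives the factor $k^2$, yielding $\dim_H f(\R)\le 1+k^2$.
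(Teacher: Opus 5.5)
The survey only quotes this theorem from Smirnov and gives no proof, so I compare your outline with Smirnov's argument. Your architecture (antisymmetric representation, holomorphic motion, Astala-type estimate) is the right one, but each step fails as written.

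\emph{Step 1.} Factoring $f=g\circ h$ with $h$ symmetric and $\mu_h=\mu_f$ on the upper half-plane does make $g$ conformal there. On the lower half-plane, however, $|\mu_g|$ is the pseudo-hyperbolic distance between $\mu_f(z)$ and $\overline{\mu_f(\bar z)}$. This can be as large as $2k/(1+k^2)$, for instance when $\mu_f$ is already antisymmetric with $|\mu_f|=k$; it is not bounded by $k$. Reflecting $\mu_g$ into an antisymmetric coefficient then changes the curve. The maps $\tilde f$ and $g$ have different coefficients on the upper half-plane, so uniqueness in the measurable Riemann mapping theorem says nothing. Moreover, a QC map conformal on one side of a curve need not preserve its dimension.

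The correct lemma takes one line. Let $h$ be the normalized solution whose coefficient $\mu_h(z)$ is the hyperbolic midpoint in $\mathbb{D}$ of $\mu_f(z)$ and $\overline{\mu_f(\bar z)}$. This coefficient is symmetric, so $h(\R)=\R$. The composition formula then shows that $\phi=f\circ h^{-1}$ satisfies $\mu_\phi(\bar w)=-\overline{\mu_\phi(w)}$ and $\phi(\R)=f(\R)$. Also $|\mu_\phi|\le k$, because half the hyperbolic distance between two points of the disc of radius $k$ is at most the distance from $0$ to $k$.

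\emph{Step 2.} The symmetry is backwards. Real multiples of an antisymmetric coefficient remain antisymmetric, and comparing coefficients gives $\overline{f_\lambda(\bar z)}=f_{-\bar\lambda}(z)$. Hence $\R$ is preserved for purely imaginary $\lambda$. For real $\lambda$ you only get that $f_{-\lambda}(\R)$ is the mirror image of $f_\lambda(\R)$. As you state it, taking $\lambda=k\in(-1,1)$ would force $f(\R)=\R$.

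\emph{Step 3.} This is where $k^2$ must come from, and the proposed mechanism cannot produce it. Take a packing by discs $B_j$ centered on $\R$ with weights $p_j$ of entropy $I$. Astala's method gives, up to errors negligible for fine packings, a positive harmonic function $H$ on $\mathbb{D}$ with $\frac{1}{s_\lambda}-\frac12=\frac{H(\lambda)}{2I}$, where $s_\lambda$ is the dimension of the moved configuration.

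A nonnegative superharmonic function vanishing on a diameter is identically zero, so the ``first-order vanishing'' idea is empty. If the only extra input is $H\ge I$ on the diameter of lines, the sharp conclusion is $H(k)\ge (1-\frac{4}{\pi}\arctan k)\,I$ (harmonic measure of the diameter in a half-disc). That yields only $\dim\le 1+\frac{2}{\pi}k+O(k^2)$, still linear in $k$.

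The missing idea is the mirror relation. Since $f_{-k}(B_j)$ is the reflection of $f_k(B_j)$, we have $H(-k)=H(k)$. Therefore $H(k)=\frac12\bigl(H(k)+H(-k)\bigr)=\tilde H(k^2)$, where $\tilde H(\lambda^2):=\frac12\bigl(H(\lambda)+H(-\lambda)\bigr)$ defines a positive harmonic function on $\mathbb{D}$. Because $f_0=\mathrm{id}$, we have $\tilde H(0)=H(0)\ge I$. Harnack's inequality at the point $k^2$ gives $H(k)\ge\frac{1-k^2}{1+k^2}\,I$, i.e.\ $s_k\le 1+k^2$. This replaces Astala's factor $\frac{1-k}{1+k}$, which yields only $1+k$.

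You also need to run Astala's reduction (to maps conformal on the discs) with conjugation-invariant disc families, so that antisymmetry survives. The $L^2$/Schwarz-lemma heuristic you sketch does not substitute for any of this.
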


The following result of Prause and Smirnov \cite{ps:qs-distortion-spectrum} builds on Tukia's example in the context of Theorem \ref{th:haus-dim-quasilines}.

\begin{theorem}[Prause--Smirnov]\label{th:qc-haus-dim-subsets-line}
Let $f: \R \to \R$ be QS with a $k$-QC extension to $\C$. Let $E \subset \R$ satisfy $\dim_H E = s \in [0,1]$. Then, setting $l = (1-s)^{1/2}$, we have
\begin{equation}\label{eq:qc-haus-dim-subsets-line}
\frac{(1-k^2)(1-l^2)}{(1+kl)^2} \le s' = \dim_H f(E) \le \frac{(1-(k \wedge l)^2)(1-l^2)}{(1-(k \wedge l) \, l)^2}.
\end{equation}
\end{theorem}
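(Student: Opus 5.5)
Both inequalities in \eqref{eq:qc-haus-dim-subsets-line} come from the same holomorphic-motion machinery behind Theorem \ref{th:haus-dim-quasilines}. The plan is to embed $f$ in a holomorphic family and control dimension by harmonicity in the parameter. We take a $k$-QC extension $F$ of $f$ to $\C$ with Beltrami coefficient $\mu$. First we symmetrize $\mu$ across $\R$, i.e.\ $\mu(\bar z)=\overline{\mu(z)}$, so that every member of the family
\[
F_\lambda, \qquad \mu_\lambda = \frac{\lambda}{k}\,\mu, \qquad \lambda \in \mathbb{D},
\]
maps $\R$ to itself when $\lambda$ is real. The measurable Riemann mapping theorem gives normalized solutions $F_\lambda$ depending holomorphically on $\lambda$, and $F_k = F$. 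Thus $E$ moves holomorphically in $\C$, and along the real segment it moves inside $\R$, where the quasisymmetric maps live.

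The main step, and the main obstacle, is to refine Astala's argument by encoding the one-dimensionality of $E\subset\R$ in a weight at the ``conformal'' end. I would reduce to $E$ a finite union of well separated intervals of Whitney type, or a Cantor-type set carrying a Frostman measure $\nu$ with $\nu(I)\approx |I|^s$. I would then consider the pressure-type functional
\[
P(\lambda,t) = \log \sum_j |F_\lambda(I_j)|^t .
\]
In the style of Smirnov's proof, one uses the decomposition $\mu=\mu^++\mu^-$ into symmetric and antisymmetric parts. Then $\log|F_\lambda'|$ is harmonic in $\lambda$ in a suitable averaged sense, giving an extra quadratic gain $k^2$ instead of $k$.

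Concretely, I would apply Harnack's inequality to the positive harmonic function $\lambda\mapsto s-\text{(local dimension)}$ on the disk, but with the boundary datum improved. For $\lambda$ in the disk, $F_\lambda(E)$ lies on a $|\lambda|$-quasicircle-type curve $F_\lambda(\R)$, whose own dimension is at most $1+|\lambda|^2$ by Theorem \ref{th:haus-dim-quasilines}. This bounds the admissible ``dimension loss'' direction by the factor $l=(1-s)^{1/2}$. I expect the delicate point to be showing that the optimal extremal harmonic majorant is the Möbius-type expression
\[
\frac{(1-\lambda^2)(1-l^2)}{(1+\lambda l)^2},
\]
which equals $s$ at $\lambda=0$ (indeed $1-l^2=s$). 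I would try to verify this by computing the Poisson integral on a subdisk of radius $l$, where the constraint is saturated.

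The two bounds then follow by specialization. Evaluating at $\lambda=\pm k$ gives the lower bound, obtained for the inverse motion. The upper bound follows the same way, except that for $k\ge l$ the majorant reaches its maximum $1$ at $\lambda=l$, since $s'\le 1$ trivially. This accounts for the appearance of $k\wedge l$. For sharpness I would not argue here, since the statement concerns only the inequalities.
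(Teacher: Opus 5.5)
Your opening step is the right one: reflecting the extension so that $\mu(\bar z)=\overline{\mu(z)}$ and setting $\mu_\lambda=\lambda\mu/k$ gives a holomorphic motion with $F_\lambda(\R)=\R$ for real $\lambda$. The survey only cites this theorem, and this symmetric motion is where the cited Prause--Smirnov argument also starts. After that, however, the proposal has a genuine gap, and the steps you sketch would not work.

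\emph{Why the sketched steps fail.}
The map $\lambda\mapsto s-\dim_H F_\lambda(E)$ is neither harmonic nor positive, since dimension can increase. So there is nothing to apply Harnack's inequality to. The expression $(1-\lambda^2)(1-l^2)/(1+\lambda l)^2$ is the conclusion, not a harmonic majorant, and ``the Poisson integral on a subdisk of radius $l$'' corresponds to no actual step.

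Smirnov's bound $\dim F_\lambda(\R)\le 1+|\lambda|^2$ is not the relevant input. It does not involve $s$ at all, so it cannot produce $l=(1-s)^{1/2}$. Adding it as a constraint to the extremal problem below does not change that problem's solution.

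There is also no quadratic gain to explain. For $0<s<1$ the lower and upper bounds in \eqref{eq:qc-haus-dim-subsets-line} have $k$-derivatives $-2s(1-s)^{1/2}$ and $+2s(1-s)^{1/2}$ at $k=0$. Moreover, after your symmetrization $\mu^-=0$, so the decomposition $\mu=\mu^++\mu^-$ does nothing.

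\emph{The missing idea.} You need Astala's harmonic quantity, together with the constraint supplied by the real parameters.

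Fix disjoint discs $B(x_j,r_j)$ with $x_j\in E$, weights $p_j$, and entropy $I=\sum_j p_j\log(1/p_j)$. After the usual reduction making $F_\lambda$ conformal on the discs (this respects the symmetry, since the discs are centred on $\R$), the quantity
\[
L(\lambda)=\sum_j p_j\log\bigl(1/(r_j|F_\lambda'(x_j)|)\bigr)
\]
is harmonic in $\lambda$. By the variational principle $\log\sum_j\rho_j^t=\max_p\sum_j p_j\log(\rho_j^t/p_j)$, the image dimension is controlled by $I/L(\lambda)$.

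Two constraints now hold, both up to $O(1)$ errors that are negligible as $I\to\infty$:
\begin{enumerate}
\item disjointness of the image discs in $\C$ gives $L\ge I/2$ on $\mathbb{D}$;
\item for real $\lambda$ the image discs are centred on $\R$, which gives the stronger bound $L\ge I$ on $(-1,1)$.
\end{enumerate}
The second constraint is the only place where $F_\lambda(\R)=\R$ is used, and it is what produces $l$.

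So $u=L/I-\tfrac12$ is a positive harmonic function with $u(0)\ge\tfrac1s-\tfrac12$ and $u\ge\tfrac12$ on $(-1,1)$, and what must be shown is $u(k)\ge\tfrac1{s'}-\tfrac12$. The minimum of $u(k)$ under these constraints is attained by
\[
u^*(\lambda)=\frac{(1-l)^2P(\lambda,1)+(1+l)^2P(\lambda,-1)}{4(1-l^2)},\qquad P(\lambda,\zeta)=\frac{1-|\lambda|^2}{|\zeta-\lambda|^2}.
\]
This function meets the real-axis constraint only at $\lambda=l$, and for $0<k<l$ it gives $u^*(k)+\tfrac12=\frac{(1-kl)^2}{(1-k^2)(1-l^2)}$.

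Optimality follows from the pointwise inequality $P(k,\zeta)\ge aP(l,\zeta)+c$ on $\partial\mathbb{D}$, with $a=\frac{k(1-l^2)}{l(1-k^2)}$ and $c=\frac{(l-k)(1-kl)}{l(1-k^2)}>0$. Integrating it against the Herglotz measure of $u$ gives $u(k)\ge a\,u(l)+c\,u(0)\ge u^*(k)$.

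This is exactly the upper bound, and once $k\ge l$ the constraint simply gives $s'\le 1$. The lower bound follows by applying the upper bound to $f^{-1}$ and $f(E)$; equivalently, $(1-s)^{1/2}$ moves by pseudo-hyperbolic distance at most $k$. This extremal problem is the heart of the proof, and your proposal leaves it as an expectation rather than proving it.
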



We also remark that Fuhrer, Ransford and Younsi \cite{fry:motions} have given a new proof of Astala's bounds \eqref{eq:qc-haus-dim-distortion-b-2d} via the theory of planar inf-harmonic functions. Their proof applies also to the box-counting and packing dimensions. We will discuss distortion of box-counting dimension in the following section, in the context of Sobolev maps.

Finally, we briefly review another proof for the estimates \eqref{eq:qc-haus-dim-distortion-b}. In \cite{im:capacity}, Iwaniec and Martin establish \eqref{eq:qc-haus-dim-distortion-b} by studying quasiconformal distortion of Sobolev capacity. Following \cite{im:capacity}, we recall that, for a compact set $E$ in $\Omega \subset \R^n$, the {\it variational $s$-Sobolev capacity} $s-\Capp(E,\Omega)$ is the infimum of the quantities $\int_\Omega |\nabla u|^s$ over all $u \in C^\infty_0(\Omega)$ such that $u$ is equal to one in a neighborhood of $E$. It is easy to see that the value of $s-\Capp(E,\Omega)$ is unchanged if the infimum is taken instead over $u \in W^{1,n}_\loc$ with compact support in $\Omega$. A closed set $E \subset \Omega$ is said to have {\it vanishing $s$-capacity} if $s-\Capp(E',U') = 0$ for every compact $E' \subset E$ and every $U' \subset \Omega$ with $E' \subset U'$. We cite without proof the following relationship between Hausdorff dimension and Sobolev capacity; see \cite{cos:capacity} for a proof in the setting of metric measure spaces.

\begin{proposition}\label{prop:haus-dim-sobolev-capacity}
If $E \subset \R^n$ is closed, then $\sup \{ s : \mbox{$E$ has vanishing $s$-capacity} \} = n - \dim_H E$.
\end{proposition}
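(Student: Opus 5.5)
Write $d = \dim_H E$. I would prove the two inequalities $\sup\{s\} \ge n-d$ and $\sup\{s\} \le n-d$ separately. The first uses coverings and cutoff functions. The second uses Frostman measures and potentials. Both steps are local, so I may assume $E$ is compact: vanishing capacity is tested on compact $E'\subset E$, and for closed $E$ we have $\dim_H E = \sup$ of $\dim_H(E\cap \overline{B})$ over closed balls $\overline{B}$.

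\emph{Vanishing capacity for $s<n-d$.} Fix compact $E'\subset E$, an open $U'\supset E'$, and $1<s<n-d$; small $s$ is handled by monotonicity in $s$ via H\"older on bounded sets. Choose $t$ with $d<t<n-s$, so $\cH^t(E')=0$. For $\eps>0$, cover $E'$ by finitely many balls $B(x_i,r_i)\subset U'$ with $\sum_i r_i^t<\eps$ and all $r_i<1$. Let $u_i$ be Lipschitz, equal to $1$ on $B(x_i,r_i)$, supported in $B(x_i,2r_i)$, with $|\nabla u_i|\le 1/r_i$. Put $u=\max_i u_i$, mollified slightly; this is an admissible test function. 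Since $|\nabla u|\le \max_i|\nabla u_i|$ a.e.,
\[
\int |\nabla u|^s \le \sum_i \int |\nabla u_i|^s \le C\sum_i r_i^{n-s} \le C\sum_i r_i^{t}<C\eps ,
\]
because $n-s>t$ and $r_i<1$. Hence $s-\Capp(E',U')=0$.

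\emph{Non-vanishing capacity for $s>n-d$.} Take $t$ with $n-s<t<d$. By Frostman's lemma (volume version) there is a nonzero $\nu$ supported on a compact $E'\subset E$ with $\nu(B(x,r))\le r^t$. Fix a bounded $U'\supset E'$. For an admissible $u$, the representation $1=u(x)\le C\int |\nabla u(y)|\,|x-y|^{1-n}\,dy$ holds for $x\in E'$. Integrating against $\nu$ and using Fubini gives
\[
\nu(E') \le C\int |\nabla u|\, V\nu, \qquad V\nu(y)=\int |x-y|^{1-n}\,d\nu(x).
\]
H\"older then gives $\nu(E')\le C\|\nabla u\|_{L^s}\,\|V\nu\|_{L^{s'}(U')}$ with $s'=s/(s-1)$. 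It remains to show $V\nu\in L^{s'}(U')$. The growth bound yields the dyadic layer-cake estimate
\[
V\nu(y) \lesssim \sum_k 2^{k(n-1)}\,\nu\bigl(B(y,2^{-k})\bigr),
\]
and then Minkowski's inequality over the dyadic scales together with
\[
\int \nu\bigl(B(y,r)\bigr)^{s'}\,dy \le r^{t(s'-1)}\,r^n\,\nu(\R^n)
\]
gives
\[
\|V\nu\|_{s'} \lesssim \sum_k 2^{k(n-1)}\,2^{-k(t(s'-1)+n)/s'}.
\]
This geometric series converges exactly when $(n-1)s' < t(s'-1)+n$, i.e.\ when $t > n-s$, which is our choice of $t$. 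Therefore the capacity of $E'$ in $U'$ is at least $c\,\nu(E')^{s}>0$, so $E$ does not have vanishing $s$-capacity.

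The main obstacle I expect is this second step: making the potential estimate rigorous, especially the pointwise representation and the exponent bookkeeping at the endpoints. I would try the dyadic decomposition above first. If it becomes messy, I would fall back on the Wolff potential/Hedberg inequality machinery. The first step is routine once the gradient of the maximum is controlled by the sum of the $|\nabla u_i|^s$.
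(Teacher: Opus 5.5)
The paper gives no proof of this proposition. It quotes it from \cite{cos:capacity}, where it is proved in metric measure spaces. Your argument is the classical Euclidean one and is correct in substance. For $s<n-\dim_H E$ you use coverings by balls plus cutoffs; for $s>n-\dim_H E$ you test a Frostman measure against the representation $|u(x)|\le C\int|\nabla u(y)|\,|x-y|^{1-n}\,dy$. Your exponent bookkeeping is right: the small-scale series converges if and only if $t>n-s$. A metric-space proof has no such representation formula at its disposal and typically replaces it by chaining estimates from a Poincar\'e inequality plus a covering argument. Your route is shorter and yields the explicit bound (capacity of $E'$ in $U'$) $\ge \bigl(\nu(E')/C\|V\nu\|_{L^{s'}(U')}\bigr)^{s}$, but it is tied to $\R^n$.

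One restriction must be stated explicitly. Your second step uses $s>1$ through $s'=s/(s-1)$, and this is essential, not technical. With the paper's definition, for $0<s<1$ every compact $E'$ has $s$-capacity zero in every open $U'\supset E'$. Indeed, take a cutoff equal to $1$ on finitely many balls covering $E'$ and dropping to $0$ across a layer of width $\delta$; it has $\int|\nabla u|^s=O(\delta^{1-s})$. Hence if $\dim_H E>n-1$ the supremum in the proposition is $1$, not $n-\dim_H E$. The statement is correct only for $\dim_H E\le n-1$, and that is exactly what your argument proves.

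Relatedly, your reduction of small $s$ to some $s_0\in(1,n-d)$ via H\"older presupposes $d<n-1$. For $d=n-1$, run the covering argument directly with $s<1$. It needs $s\ge 1$ only in the mollification step. For $s<1$ you can instead use $\nabla(u*\phi_\eta)\to\nabla u$ in $L^1$ on a bounded set, together with $\int_{U'}|g|^s\le|U'|^{1-s}\|g\|_{L^1}^s$ and subadditivity of $a\mapsto a^s$.

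Two small repairs:
\begin{enumerate}
\item Take all radii below $\tfrac12\dist(E',\partial U')$, so that $B(x_i,2r_i)\subset U'$; you only required $B(x_i,r_i)\subset U'$.
\item In the dyadic estimate for $V\nu$, sum only over $k\ge k_0$ with $2^{-k_0}\ge\diam(U'\cup\spt\nu)$. As written, the sum runs over all $k$, and the Frostman bound $\nu(B(y,2^{-k}))\le 2^{-kt}$ makes it diverge as $k\to-\infty$. For $y\in U'$ those large annuli carry no $\nu$-mass, so they can simply be dropped.
\end{enumerate}
Finally, if $d=0$ the interval $(n-s,d)$ contains no admissible $t\ge 0$. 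In that case take $t=0$ and $\nu$ a Dirac mass, which handles $s>n$.
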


A capacitary-theoretic proof of \eqref{eq:qc-haus-dim-distortion-b} follows by combining Proposition \ref{prop:haus-dim-sobolev-capacity} with the next proposition, a result of Iwaniec and Martin \cite[Lemma 1.1]{im:capacity}.

\begin{proposition}\label{prop:qc-invariant-sobolev-capacity}
Let $f:\Omega \to \Omega'$ be a QC map between domains in $\R^n$, and assume that $f$ lies in $W^{1,p}_\loc$ for some $p>n$. Let $E \subset \Omega$ be a closed set of vanishing $s$-capacity for some $0<s<n$. Then $f(E)$ has vanishing $r$-capacity for all $r \le \tfrac{s(p-n)}{p-s}$.
\end{proposition}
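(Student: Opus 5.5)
The plan is to transport admissible test functions for $E$ through $f$ and to control the resulting energy by the higher integrability of the Jacobian, using H\"older's inequality. Fix a compact set $E' \subset f(E)$ and an open set $U'$ with $E' \subset U' \subset \Omega'$. Put $E'' := f^{-1}(E')$, which is a compact subset of $E$, and $U'' := f^{-1}(U')$, which is open and contains $E''$. Choose a compact neighbourhood $F \subset U''$ of $E''$. Since $E$ has vanishing $s$-capacity, for every $\eps>0$ there is $u \in C^\infty_0(\operatorname{int} F)$ with $u \equiv 1$ near $E''$ and $\int |\nabla u|^s < \eps$. (Restricting to supports inside the fixed set $F$ does not affect the vanishing of the capacity, because $s-\Capp(E'',\operatorname{int}F)=0$.) We then set $v := u \circ f^{-1}$. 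This function has compact support in $f(F) \subset U'$ and equals one near $E'$.

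Next we estimate the $r$-energy of $v$. Since $f^{-1}$ is QC, it lies in $W^{1,n}_\loc$, so the chain rule gives $|\nabla v(y)| \le |\nabla u(f^{-1}(y))|\,\|Df^{-1}(y)\|$ a.e. Because $f$ is QC, it satisfies Lusin's condition (N) and is differentiable a.e. with $Jf>0$ a.e., so we may change variables $y=f(x)$. This gives
\[
\int |\nabla v|^r \le \int_F |\nabla u(x)|^r \,\|Df^{-1}(f(x))\|^r\, Jf(x)\,dx .
\]
Applying the distortion inequality \eqref{eq:analytic-QC} to $f^{-1}$, which is $K^{n-1}$-QC, yields $\|Df^{-1}(f(x))\|^n \le K^{n-1} Jf^{-1}(f(x)) = K^{n-1}/Jf(x)$. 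Hence the integrand is at most $C(n,K)\,|\nabla u|^r\, Jf^{1-r/n}$.

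We then apply H\"older's inequality with exponents $s/r$ and $s/(s-r)$; note that the admissible $r$ satisfy $r<s$. This gives
\[
\int |\nabla v|^r \le C \Big(\int |\nabla u|^s\Big)^{r/s}\Big(\int_F Jf^{\frac{(n-r)s}{n(s-r)}}\Big)^{1-r/s}.
\]
Since $Jf \le \|Df\|^n$ and $f \in W^{1,p}_\loc$, the function $Jf$ lies in $L^{p/n}(F)$. The second factor is therefore finite as soon as $\frac{(n-r)s}{n(s-r)} \le \frac pn$, that is, $(n-r)s \le p(s-r)$. This is exactly the condition $r \le \frac{s(p-n)}{p-s}$. (For smaller $r$ the exponent is smaller, and we use that $F$ is bounded.) Consequently $\int|\nabla v|^r \le C\,\eps^{r/s}\,\|Df\|_{L^p(F)}^{n(1-r/s)}$, which tends to $0$ as $\eps \to 0$. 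This shows $r-\Capp(E',U')=0$.

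The step I expect to need the most care is justifying that $v$ is an admissible test function. It is not smooth, only Sobolev, so we use the remark in the text that the capacity may be computed with compactly supported Sobolev functions: $v$ is continuous, has compact support and lies in $W^{1,r}$, and mollifying it gives smooth competitors whose energies are arbitrarily close to that of $v$. The chain rule and the change of variables also require care; both are valid because $u$ is smooth, $f^{-1} \in W^{1,n}_\loc$ is a homeomorphism, and $f$ satisfies condition (N) with $Jf>0$ a.e.
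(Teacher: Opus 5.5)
Your proposal is correct and is essentially the paper's argument: compose a test function of small $s$-energy with $f^{-1}$, change variables, bound the integrand pointwise via the distortion inequality, apply H\"older with exponents $s/r$ and $s/(s-r)$, and use $f\in W^{1,p}_\loc$, arriving at the same condition $s(n-r)/(s-r)\le p$. Using the distortion inequality for $f^{-1}$ to get $Jf^{1-r/n}$, instead of the paper's $\|Df\|\,|h|\le K|(Df)^T h|$ giving $\|Df\|^{n-r}$, is only a cosmetic difference; one harmless slip is that the power of $\|Df\|_{L^p(F)}$ in your final bound should be $n-r$ rather than $n(1-r/s)$, but only the finiteness of that factor, independent of $\eps$, matters.
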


The proof of Proposition \ref{prop:qc-invariant-sobolev-capacity} involves a change of variable argument coupled with H\"older's inequality and Gehring's higher integrability theorem, similar to the previous argument of Gehring and V\"ais\"al\"a. Assuming without loss of generality that $E$ is compact, and choosing a test function $\varphi \in C^\infty_0(\Omega)$ for the $s$-capacity of $E$, one verifies that $\psi = \varphi \circ f^{-1}$ is admissible for the $r$-capacity of $f(E)$. Moreover,
\begin{equation*}\begin{split}
r-\Capp(f(E),\Omega') &\le \int_{\Omega'} |\nabla \psi|^r = \int_\Omega |\nabla \psi \circ f|^r \det Df \le \int_\Omega ||Df||^n |\nabla \psi \circ f|^r \\
&= \int_\Omega ||Df||^{n-r} \bigl( ||Df|| \cdot |\nabla \psi \circ f| \bigr)^r \le K^r \int_{\spt\varphi} ||Df||^{n-r} |\nabla \varphi|^r \\
\intertext{using the fact that $||Df|| \, |h| \le K |(Df)^T h|$ for a $K$-quasiconformal map $f$, together with the identity $(Df)^T (\nabla \psi \circ f) = \nabla \varphi$,}
&\le K^r \left( \int_{\spt\varphi} |\nabla \varphi|^s \right)^{r/s} \, \left( \int_{\spt\varphi} ||Df||^{s(n-r)/(s-r)} \right)^{1-r/s}.
\end{split}\end{equation*}
Since $\tfrac{s(n-r)}{s-r} \le p$, $\spt\varphi$ is compact, and $f \in W^{1,p}_\loc$, the second integral is finite. Taking the infimum over $\varphi$ implies that $f(E)$ has vanishing $r$-capacity.

\section{Sobolev dimension distortion: a new perspective}\label{sec:sobolev}

In the preceding section, we considered the distortion of Hausdorff dimension by quasiconformal homeomorphisms. The membership of such maps in Sobolev spaces with improved regularity played a key role. In this section we change perspective and consider mappings (not necessarily injective) in supercritical Sobolev classes. We observe that one-sided distortion estimates for Hausdorff dimension continue to hold; quasiconformality is not required. The line of research which we review in this section draws substantial inspiration from a 2000 paper by Kaufman \cite{kau:sobolev}. Kaufman begins his paper with an elementary proof for a one-sided distortion bound for the increase in Hausdorff dimension under a supercritical Sobolev mapping.

\begin{definition}
A mapping $f \in W^{1,p}(\R^n:\R^N)$ is {\it supercritical} if $p>n$.
\end{definition}

\begin{proposition}[Kaufman]\label{prop:kaufman}
Let $f \in W^{1,p}(\Omega:\R^N)$ be a continuous and supercritical mapping defined in a domain $\Omega \subset \R^n$. Let $E \subset \Omega$ be a set of $\sigma$-finite $\cH^s$ measure for some $s \in (0,n)$. Then $\cH^{s'}(f(E)) = 0$, where $s' = \tfrac{ps}{p-n+s}$. In particular,
\begin{equation}\label{eq:kaufman}
\dim_H f(E) \le \frac{p \, \dim_H E}{p-n+\dim_H E}.
\end{equation}
\end{proposition}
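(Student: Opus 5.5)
The plan is to repeat the covering-and-H\"older argument from the proof of \eqref{eq:qc-haus-dim-distortion-b}. Quasisymmetry is no longer available, so the bound $\diam f(Q_i)\lesssim |f(Q_i)|^{1/n}$ has to be replaced by the Morrey--Sobolev oscillation estimate. Since $\cH^s$ is $\sigma$-finite on $E$ and $\cH^{s'}$ is countably subadditive, we may assume $\cH^s(E)<\infty$, $E$ bounded, and $\overline{E}$ (or at least a neighbourhood of $E$) inside a compactly contained subdomain $U\Subset\Omega$. For $p>n$, Morrey's inequality gives, for every cube $Q\subset U$ of side $r$,
\begin{equation*}
\diam f(Q) \le C(n,p)\, r^{1-n/p} \left(\int_{Q} |Df|^p\right)^{1/p}.
\end{equation*}
This is where continuity of $f$ is used: it is the continuous representative that satisfies this pointwise oscillation bound.

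The second step is to cover and apply H\"older. For $\delta>0$, choose a cover of $E$ by cubes $Q_i$ of side $r_i<\delta$ with $\sum_i r_i^s \le C\,\cH^s(E)+1$. We may take dyadic-type cubes, or use the $5r$-covering lemma, so that the cubes have bounded overlap; then $\sum_i \int_{Q_i}|Df|^p \le C\int_U |Df|^p$. Setting $s' = ps/(p-n+s)$, apply H\"older with exponents $p/s'$ and $p/(p-s')$:
\begin{equation*}
\sum_i (\diam f(Q_i))^{s'} \le C \Bigl(\sum_i r_i^{(1-n/p)s'p/(p-s')}\Bigr)^{1-s'/p} \Bigl(\sum_i \int_{Q_i}|Df|^p\Bigr)^{s'/p}.
\end{equation*}
A direct computation shows that $(1-n/p)s'p/(p-s') = s$ exactly for this $s'$. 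The right-hand side is therefore bounded, which gives $\cH^{s'}(f(E))<\infty$. It does not yet give zero.

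The third step, which I expect to be the main obstacle, is upgrading finiteness to $\cH^{s'}(f(E))=0$. For this I would use absolute continuity of the integral. Split $E$ into the set $E_0$ where the upper density $\limsup_{r\to0} r^{-s}\int_{Q(x,r)}|Df|^p$ is positive (possibly infinite), and its complement $E_1$. A standard density theorem for measures, namely that $\mu(A)\ge c\,\lambda\,\cH^s(A)$ whenever the upper $s$-density of $\mu$ exceeds $\lambda$ on $A$, applied to $\mu=|Df|^p\,dx$, shows that $\cH^s\restrict E_0$ vanishes. Indeed, $E_0$ has Lebesgue measure zero (because $s<n$ and $\cH^s(E)<\infty$), so $\mu(E_0)=0$, and $E_0$ is a countable union of sets $\{\text{density}>\lambda\}$. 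On $E_0$ itself, the H\"older bound applied with an arbitrarily small $\sum_i r_i^s$ then handles those points.

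On $E_1$, a Vitali-type cover can be chosen with $\int_{Q_i}|Df|^p\le \eta\, r_i^s$ for each cube, where $\eta$ is arbitrary. Using Morrey's bound directly then gives
\begin{equation*}
(\diam f(Q_i))^{s'} \le C\, r_i^{(1-n/p)s'}\,(\eta r_i^s)^{s'/p} = C\,\eta^{s'/p} r_i^{s},
\end{equation*}
since $(1-n/p)s' + s s'/p = s$ for this value of $s'$. Summing gives $\cH^{s'}_\delta(f(E_1))\le C\eta^{s'/p}(\cH^s(E)+1)$, and letting $\eta\to0$ yields zero. The dimension inequality \eqref{eq:kaufman} follows by applying the measure statement with $s$ slightly above $\dim_H E$, for which $\cH^s(E)=0$, and letting $s\searrow\dim_H E$, using that $s\mapsto ps/(p-n+s)$ is increasing and continuous.
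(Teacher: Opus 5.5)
Your proof is correct, but your third step takes a longer route than the paper's. The paper's proof is your second step plus one observation. Because $s<n$, $E$ is Lebesgue-null, so the open set $U\supset E$ can be chosen with arbitrarily small Lebesgue measure, and then absolute continuity of the integral makes $\|Df\|_{L^p(U)}$ as small as you like. Covering $E$ by essentially disjoint dyadic cubes inside such a $U$, the H\"older estimate you already wrote gives $\cH^{s'}_\delta(f(E))\le C(\cH^s(E)+1)^{1-s'/p}\|Df\|_{L^p(U)}^{s'}$. Hence $\cH^{s'}(f(E))=0$ at once, contrary to your remark that step two ``does not yet give zero.'' The paper writes this with an auxiliary exponent $t>s$, but running the same computation at $t=s$ gives the measure statement.

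Your density dichotomy for $\mu=|Df|^p\,dx$ uses the same two inputs, $|E|=0$ and $\mu\ll\cL^n$, in pointwise form. The density theorem disposes of $E_0$, and on $E_1$ you get a per-cube bound that needs neither H\"older nor bounded overlap. The gain is a local, linear estimate $\cH^{s'}(f(A))\le C\lambda^{s'/p}\cH^s(A)$ on any set $A$ where the upper $s$-density of $\mu$ is at most $\lambda$. The cost is the density-theorem machinery, where the paper needs only absolute continuity of the integral.

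One step on $E_1$ should be rephrased. A Vitali or $5r$ disjoint selection does not control $\sum_i r_i^s$: disjointness bounds that sum only when the cubes carry \emph{large} $\mu$-mass, as on $E_0$. Also, the radius below which $\mu(Q(x,r))\le\eta r^s$ depends on $x$. The standard fix runs as follows.
\begin{itemize}
\item Set $E_{1,k}=\{x\in E_1:\mu(Q(x,r))\le\eta r^s \mbox{ for all } r<1/k\}$.
\item Cover $E_{1,k}$ by sets $A_j$ of diameter $<1/(2k)$ with $\sum_j(\diam A_j)^s\le\cH^s(E)+1$.
\item Replace each $A_j$ by a cube of side comparable to $\diam A_j$, centred at a point of $A_j\cap E_{1,k}$.
\item Let $k\to\infty$, using continuity from below of the Borel regular measure $\cH^{s'}$ on the increasing sets $f(E_{1,k})$.
\end{itemize}
The same caution applies to the $5r$-lemma option in your second step: enlarged disjoint balls need not have bounded overlap, and dyadic cubes are what make that step work.
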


Theorem \ref{th:qc-haus-dim-distortion} follows immediately from Proposition \ref{prop:kaufman}, arguing (as before) first for fixed $p \in (n,p^\Sob(n,K))$, then letting $p \nearrow p^\Sob(n,K)$, and finally using the fact that the inverse of a $K$-quasiconformal map is $K^{n-1}$-quasiconformal.

\begin{proof}
We appeal to the fact that Hausdorff dimension can be computed via coverings by {\it dyadic cubes}. Without loss of generality, assume $\cH^s(E) < \infty$ and $E \subset U \Subset \Omega$. Fix $t> s = \dim_H E$ and let $\{Q_i\}$ be an essentially disjoint cover of $E$ by dyadic cubes in $U$ with $\sum_i (\diam Q_i)^t < \cH^s(E) + 1$. In place of the previous argument using relative metric distortion, we now appeal to the Morrey--Sobolev inequality: {\it there exists $C(n,p)>0$ so that $|f(x)-f(y)| \le C(n,p) |x-y|^{1-n/p} (\int_{Q_i} ||Df||^p )^{1/p}$ for all $x,y \in Q_i$.}\footnote{Here we rely on the fact that we consider the {\it continuous} representative of the given map.} In particular,
$\diam f(Q_i) \le C(n,p) (\diam Q_i)^{1-n/p} (\int_{Q_i} ||Df||^p )^{1/p}$ for all $i$. From here onwards, the proof closely follows the previous argument. With
$t' = \tfrac{pt}{p-n+t}$,
estimating $\sum_i (\diam f(Q_i))^{t'}$ from above via H\"older's inequality yields
$$
\sum_i (\diam f(Q_i))^{t'} \le C \left( \sum_i (\diam Q_i)^{t} \right)^{1-t'/p} \, \left( \int_{\cup_i Q_i} ||Df||^p \right)^{t'/p}.
$$
Since the cubes $\{Q_i\}$ were chosen to be essentially disjoint, we deduce that
$$
\cH^{t'}_{\eps'}(f(E)) \le C (\cH^s(E) + 1)^{1-t'/p} ||Df||_{L^p(U)}^{t'}.
$$
Since $s<n$, $E$ is a Lebesgue null set and we may choose the open set $U \supset E$ to have arbitrarily small Lebesgue measure. By continuity of the integral, and since $f$ lies in $W^{1,p}$, the domain $U$ can be chosen so that $||Df||_{L^p(U)}$ is as small as we please. It follows that $\cH^{t'}(f(E)) = 0$, hence $\dim_H f(E) \le t'$. Now let $t \searrow s$ (so $t' \searrow s'$).
\end{proof}

As stated, the argument only works for $\dim_H E < n$. However, an easy adaptation shows that $f\in W^{1,p}$ preserves the vanishing of the Hausdorff $n$-measure: if $E \subset \Omega$ has $\cH^n(E) = 0$, then $\cH^n(f(E)) = 0$. Moreover, $\dim_H f(E) \le n$ for any $E \subset \Omega$. 

Kaufman also proved that Proposition \ref{prop:kaufman} is sharp.

\begin{theorem}[Kaufman]\label{kau:sharpness}
Let $E \subset \R^n$ be any closed set with $\dim_H E \ge s$, and let $p>n$. Then there exists $f \in W^{1,p}(\R^n:\R^n)$ continuous so that $\dim_H f(E) \ge \tfrac{ps}{p-n+s}$.
\end{theorem}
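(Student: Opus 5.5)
Fix $E$ closed with $\dim_H E \ge s$ and $p>n$, and set $s' = \tfrac{ps}{p-n+s}$. The plan is a \emph{random} construction: build a probability space of continuous maps $f_\omega \in W^{1,p}(\R^n:\R^n)$ and show that almost surely $\dim_H f_\omega(E) \ge t'$ for every $t<s$, where $t' = \tfrac{pt}{p-n+t}$. Letting $t \nearrow s$ along a sequence then gives the claim. Exact attainment of $s'$ may need a refinement; otherwise I would settle for $\ge t'$ for each fixed $t<s$ and a limiting or diagonal construction.

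\textbf{Step 1 (measure on $E$).} By Frostman's lemma (energy form), there is a probability measure $\nu$ supported on a compact subset of $E$ with $I_t(\nu)<\infty$ for every $t<s$. If needed, I first pass to a compact piece of $E$ using regularity of closed sets.

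\textbf{Step 2 (the random map).} Take $f = \sum_{k\ge 1} f_k$. Each $f_k$ is a sum of bump functions $\phi_Q$ over dyadic cubes $Q$ of side $2^{-k}$, scaled to have height $a_k = 2^{-k\alpha}$ with $\alpha = 1-\tfrac{n}{p}+\delta$, and multiplied by independent random vectors $\omega_Q$ uniform in the unit ball of $\R^n$. The Morrey--Sobolev scaling suggests this choice: a bump of height $2^{-k\alpha}$ on a cube of side $2^{-k}$ has $\int |D\phi_Q|^p \approx 2^{k(p-p\alpha-n)}$. Summing over the cubes that matter gives a finite norm, with $\delta>0$ absorbing the count; restricting to cubes near $\spt\nu$ and using compact support handles the global sum. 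Uniform convergence gives continuity, and $\|Df\|_{L^p}<\infty$ holds almost surely.

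\textbf{Step 3 (energy of the pushforward).} Set $\mu = f_\omega{}_\#\nu$. The claim is that
\[
\mathbb{E}\, I_{t'}(\mu) = \iint \mathbb{E}\,|f(x)-f(y)|^{-t'}\,d\nu(x)\,d\nu(y) \lesssim \iint |x-y|^{-t}\,d\nu\,d\nu < \infty .
\]
Once this holds, $\dim_H f(E) \ge t'$ almost surely by the energy version of Frostman's lemma. The key pointwise estimate is
\[
\mathbb{E}|f(x)-f(y)|^{-t'} \lesssim |x-y|^{-\alpha t'},
\]
with $\alpha t' \le t$ arranged by the choice of $\alpha$ and $\delta$ (indeed $(1-n/p)t' = t\cdot\tfrac{p-n}{p-n+t}$, so the exponents must be matched with care). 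To prove it, choose $k$ with $2^{-k}\approx|x-y|$ and arrange the bumps, for instance with a separated subfamily or a shifted grid, so that $x$ and $y$ lie in different level-$k$ cubes with $\phi_Q(x)\approx 1$ and $\phi_{Q'}(y)\approx 0$. Conditioning on all other randomness, $f(x)-f(y) = a_k\omega_Q + (\text{independent})$, and $\omega_Q$ has bounded density in $\R^n$. Since $t'<n$, this gives $\mathbb{E}|a_k\omega_Q + c|^{-t'} \lesssim a_k^{-t'}$ uniformly in $c$.

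\textbf{Main obstacle.} The hardest part is reconciling the exponents. A naive Hölder-type random map only yields $\dim \ge t/\alpha$, and $t/\alpha$ exceeds the Sobolev bound $t' = \tfrac{pt}{p-n+t}$, so the $W^{1,p}$ budget must actually be spent. The fix I would try first is to make the construction adapted to $\nu$: put bumps only on cubes $Q$ with $\nu(Q)\approx \side(Q)^t$, and choose heights $a_Q$ so that $\sum_Q a_Q^p\,\side(Q)^{n-p}$ is finite while $a_Q \approx \side(Q)^{t/t'}$. The count of such cubes at level $k$ is about $2^{kt}$, which gives the condition $2^{kt}\,2^{-kpt/t'}\,2^{k(p-n)}\le 2^{-k\eta}$. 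This condition is exactly $t' \le \tfrac{pt}{p-n+t}$ up to $\eta$, which is where the formula comes from. Making this cube-selection rigorous will likely require first regularizing $\nu$ to be roughly $t$-Frostman on a Cantor-type subset of $E$.
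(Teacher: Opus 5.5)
Your overall strategy, a random family of continuous $W^{1,p}$ maps combined with the energy form of Frostman's lemma, is the one the paper attributes to Kaufman. However, the proposal does not contain the step that produces the exponent $\tfrac{ps}{p-n+s}$, and the construction you actually write down cannot produce it.

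\textbf{Why Steps 2--3 fall short.} Use uniform heights $2^{-k\alpha}$, $\alpha=1-\tfrac np+\delta$, on the $N_k$ level-$k$ cubes near $\spt\nu$. The $W^{1,p}$ cost of level $k$ is then $\approx N_k2^{-kp\delta}$, so you need $p\delta\ge d:=\overline{\dim}_B\spt\nu$. Your energy estimate needs $\alpha t'\le t$, i.e.\ $t'\le \tfrac{pt}{p-n+p\delta}$. Hence Steps 2--3 give at best $\dim_H f(E)\ge \tfrac{pt}{p-n+d}$, and $d$ may be as large as $n$.

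\textbf{Why the proposed repair does not close the gap.} Placing bumps only on cubes with $\nu(Q)\approx\side(Q)^t$ moves the difficulty rather than removing it. For the energy estimate, $\nu\times\nu$-almost every pair $(x,y)$ must find a bump-carrying cube containing $x$ or $y$ at scale $\approx|x-y|$. So you need the lower bound $\nu(Q)\gtrsim\side(Q)^t$ on essentially all cubes charged by $\nu$. Frostman's lemma supplies only the upper bound, so this needs a separate regularization lemma (a dyadically regular compact piece of $E$), which you defer. As it stands, the crux of the theorem is unproved. You also leave open how a single $f$ attains $s'$ itself when $\dim_H E=s$.

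\textbf{The missing idea.} Let the heights depend on $\nu(Q)$ instead of selecting cubes. Take a volume-growth Frostman measure with $\nu(Q)\le C\side(Q)^t$. On each dyadic $Q$ of side $2^{-k}$, use height $a_Q=2^{-k\eta}\nu(Q)^{1/p}\side(Q)^{1-n/p}$.
\begin{itemize}
\item \emph{Sobolev bound.} $\|Df_k\|_p^p\lesssim 2^{-kp\eta}\sum_Q\nu(Q)=2^{-kp\eta}$ with no counting, so $f$ is continuous and in $W^{1,p}$ for every $\omega$.
\item \emph{Pointwise estimate.} For $2^{-k}\le|x-y|<2^{1-k}$, let $Q\ni x$ and $Q'\ni y$ be the cubes of side $2^{-k-c}$, with $2^c>2\sqrt n$. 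Then $\phi_Q(x)=1=\phi_{Q'}(y)$ and $\phi_Q(y)=0=\phi_{Q'}(x)$. Conditioning on $\omega_Q$, resp.\ $\omega_{Q'}$, gives $\mathbb{E}|f(x)-f(y)|^{-t'}\lesssim a_Q^{-t'/2}a_{Q'}^{-t'/2}$.
\item \emph{Summing.} The inequality $\nu(Q)\nu(Q')a_Q^{-t'/2}a_{Q'}^{-t'/2}\le\tfrac12\bigl(\nu(Q)^2a_Q^{-t'}+\nu(Q')^2a_{Q'}^{-t'}\bigr)$, together with the bounded number of partners $Q'$ of each $Q$, bounds $\mathbb{E}I_{t'}(f_\#\nu)$ by a constant times
\[
\sum_Q\nu(Q)^{2-t'/p}\,\side(Q)^{-t'(1-n/p)-t'\eta}\lesssim\sum_k 2^{-k\gamma}.
\]
Here $\gamma=t-t'\tfrac{p-n+t}{p}-t'\eta$, obtained from $\nu(Q)^{1-t'/p}\le C\side(Q)^{t(1-t'/p)}$. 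The sum is finite exactly when $t'<\tfrac{pt}{p-n+t+p\eta}$, which is where the formula comes from.
\item \emph{Reaching $s'$ itself.} Superpose independent such families built from $t_m\nearrow s$ and $\eta_m\searrow0$, with summable coefficients. Conditioning on the $m$-th family shows $\mathbb{E}I_{t_m'}(f_\#\nu_m)<\infty$ for every $m$, and the countably many almost-sure events intersect.
\end{itemize}
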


The proof of Theorem \ref{kau:sharpness} is nonconstructive; a random family of continuous $W^{1,p}$ mappings $f_\omega:\R^n \to \R^n$ is constructed and it is shown via the energy version of Frostman's lemma that with positive probability one of the maps $f_\omega$ satisfies the stated lower bound. Later in this section, we discuss constructive proofs of Theorem \ref{kau:sharpness} and more general statements, for specific choices of the source set $E$.

Kaufman also proves analogous statements for box-counting and packing dimensions. We discuss the box-counting version. For an upper bound, analogous to Proposition \ref{prop:kaufman}, a new approach is needed since we now wish to estimate the covering number $N(f(E),r)$ for suitable scales $r>0$ rather than diameter sums of the form $\sum_i (\diam f(A_i))^t$. We again anticipate that the Morrey--Sobolev inequality will play a role, however, this inequality only provides an upper bound for the image diameter, which may be (and in many cases is) sub-optimal. 

To deal with these complications, Kaufman implements a stopping-time argument based on the size of the $f$-images of dyadic cubes. Fixing a scale $\delta = 2^{-m}$ and setting $r:= 2^{-ms/s'}$ where $s \in (0,n)$ is fixed and $s' = \tfrac{ps}{p-n+s}$, we declare a dyadic cube $Q \subset \Omega$ to be {\it $r$-major} if $\diam f(Q) \ge r$ and {\it $r$-minor} if $\diam f(Q) < r$. An $r$-minor cube $Q$ whose parent is $r$-major is {\it $r$-critical}. An easy counting argument using the Morrey--Sobolev inequality and the relationship between $r$ and $\delta$ shows that the number of $r$-major cubes in $\Omega$ is $\lesssim 2^{ms}$.\footnote{To obtain this conclusion, consider first only $r$-major dyadic cubes $Q \subset \Omega$ of side length $2^{-m'}$ for some $m' \ge m$, then sum the resulting inequality over all $m' \ge m$.} Consequently, the same bound holds true for the number of $r$-critical cubes. 

We start with a closed set $E \subset \Omega$ with $s = \dim_B E$ and choose $t>s$. Then $E$ can be covered by $\lesssim 2^{mt}$ dyadic cubes $\{Q_i\}$ of side length $\le 2^{-m}$. Moreover, $f(E)$ can be covered by the images $f(Q_i)$ of the $r$-minor cubes in $\{Q_i\}$ along with the images $f(Q)$ of all $r$-critical cubes $Q$ in $\Omega$. The resulting cardinality bound is $\lesssim 2^{ms} + 2^{mt} \lesssim 2^{mt} = r^{-t'}$. This yields the estimate $N(f(E),r) \lesssim C r^{t'}$ for sufficiently small $r>0$, and hence the desired estimate $\dim_B f(E) \le t' \searrow s' = \tfrac{ps}{p-n+s}$.

\medskip

The change of perspective from quasiconformal homeomorphism to general Sobolev mapping opens the door to numerous generalizations and extensions.
\begin{itemize}
\item The target space can be generalized; there is no need to consider equidimensional maps $f: \R^n \to \R^n$. In fact, in many subsequent results the geometry of the target space plays no role; results are naturally stated in terms of metric space-valued Sobolev spaces $W^{1,p}(\R^n:Y)$ for arbitrary $(Y,d_Y)$. To simplify the presentation in this survey, we restrict attention to Euclidean targets.
\item The previous results concerned {\it universal estimates}, i.e., estimates for the distortion of dimension of an arbitrary set $E$ under a given map $f$. We may instead consider families of subsets $E_\ba$ parameterized by elements $\ba$ in some measure space $W$ and seek {\it generic estimates} valid for almost every $\ba$. Such conclusions yield new information also in the quasiconformal setting.
\item Generalizations of the source space are also of interest. We may replace the Euclidean space $\R^n$ by, for instance, either a nonabelian Carnot group equipped with a sub-Riemannian metric, or a metric measure space with bounded geometry in the sense of Heinonen--Koskela.
\end{itemize}

We next turn to generic estimates for Sobolev dimension increase in parameterized families of sets. Fix a continuous and supercritical Sobolev map $f\in W^{1,p}(\Omega:\R^N)$, $\Omega \subset \R^n$, $p>n$. Let $V$ be an $m$-dimensional linear subspace of $\R^n$ with orthogonal complement $V^\perp$, and consider the foliation of $\Omega$ by affine spaces parallel to $V$ and indexed by elements of $V^\perp$. To wit, for $\ba \in V^\perp$, let $V_\ba := V + \ba$. We consider only those points $\ba \in V^\perp$ for which $V_\ba \cap \Omega \ne \emptyset$. For each such $\ba$, the set $V_\ba \cap \Omega$ has Hausdorff dimension $m$, whence $\dim_H f(V_\ba \cap \Omega) \le \tfrac{pm}{p-n+m}$ by Proposition \ref{prop:kaufman}. On the other hand, for almost every such point $\ba$ (with respect to $\cH^{n-m} \restrict V^\perp$), the restriction of $f$ to $V_\ba \cap \Omega$ is again a continuous, supercritical Sobolev map. By an earlier remark, $\dim_H f(V_\ba \cap \Omega) \le m$ for $\cH^{n-m}$-a.e.\ $\ba$.

In \cite{bmt:foliations}, Balogh, Monti and the author interpolated between these two statements, obtaining sharp estimates on the size of exceptional sets of parameters $\ba$ for which the dimension of $f(V_\ba \cap \Omega)$ exceeds some specified value $m<\alpha<\tfrac{pm}{p-n+m}$. We denote by $G(n,m)$ the Grassmannian of all $m$-dimensional linear subspaces of $\R^n$.

\begin{theorem}[Balogh--Monti--Tyson]\label{th:bmt-foliations}
Let $m \in \{1,\ldots,n-1\}$, and $V \in G(n,m)$ be given, and let $f \in W^{1,p}(\Omega:\R^N)$ be a continuous supercritical map. For each $m < \alpha < \tfrac{pm}{p-n+m}$, the set of $\ba \in V^\perp$ for which $\cH^\alpha(f(V_\ba \cap \Omega)) > 0$ is a null set for the Hausdorff measure $\cH^\beta \restrict V^\perp$. Here $\beta = \beta(\alpha) = (n-m) - p(1-\tfrac{m}{\alpha})$.
\end{theorem}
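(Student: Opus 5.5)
The plan is a direct covering argument on each leaf, paired with a density estimate on the parameter space $V^\perp$. We may assume $\Omega$ is bounded, that $f\in W^{1,p}(\Omega)$ globally (exhaust $\Omega$ by countably many relatively compact subdomains), and, after a rotation, that $V=\R^m\times\{0\}$ and $V^\perp=\{0\}\times\R^{n-m}$. Project the energy measure $|Df|^p\,dx$ onto $V^\perp$ to get a finite Borel measure $\mu$ on $V^\perp\cong\R^{n-m}$:
$$
\mu(B)=\int_{\Omega\cap(\R^m\times B)}\|Df\|^p .
$$
By Fubini, $\mu$ is absolutely continuous with respect to $\cH^{n-m}\restrict V^\perp$.

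\emph{Step 1: leafwise covering estimate.} Fix a dyadic scale $\delta=2^{-k}$. For $\ba\in V^\perp$, let $P_\delta(\ba)$ be the dyadic cube of side $\delta$ in $V^\perp$ containing $\ba$. The leaf $V_\ba\cap\Omega$ is covered by the $\lesssim\delta^{-m}$ dyadic cubes $Q$ of side $\delta$ lying in the column $\R^m\times P_\delta(\ba)$. Apply the Morrey--Sobolev inequality to each such cube:
$$
\diam f(Q)\le C\,\delta^{1-n/p}\Bigl(\int_Q\|Df\|^p\Bigr)^{1/p}.
$$
Then apply H\"older's inequality to the sum over the $\lesssim\delta^{-m}$ cubes, with exponents $p/\alpha$ and $(1-\alpha/p)^{-1}$. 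This gives
$$
\sum_Q(\diam f(Q))^\alpha\le C\,\delta^{\alpha(1-n/p)-m(1-\alpha/p)}\,\mu(P_\delta(\ba))^{\alpha/p}=C\Bigl(\frac{\mu(P_\delta(\ba))}{\delta^{\beta}}\Bigr)^{\alpha/p}.
$$
The last equality is a short exponent computation: $\tfrac p\alpha\bigl(m-\alpha+\tfrac{\alpha n}{p}-\tfrac{m\alpha}{p}\bigr)=(n-m)-p(1-\tfrac m\alpha)=\beta$. By Morrey again, $\diam f(Q)\to0$ uniformly as $\delta\to0$. Consequently, if $\liminf_{\delta\to0}\delta^{-\beta}\mu(P_\delta(\ba))=0$, then $\cH^\alpha(f(V_\ba\cap\Omega))=0$.

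\emph{Step 2: reduce to an upper density set.} Since $P_\delta(\ba)\subset B(\ba,\sqrt{n}\,\delta)$, the exceptional set is contained in
$$
\bigcup_{j\in\N}A_j,\qquad A_j:=\Bigl\{\ba:\limsup_{r\to0}r^{-\beta}\mu(B(\ba,r))>1/j\Bigr\}.
$$
In the range $m<\alpha<\tfrac{pm}{p-n+m}$ we have $0<\beta<n-m$. (If $\beta\le0$ were allowed, the statement would be vacuous or false, so this positivity is where the hypothesis on $\alpha$ is used.)

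\emph{Step 3: show each $A_j$ is $\cH^\beta$-null.} This is the main point, and I would use the standard density theorem. For any open $U\supset A_j$, a Vitali $5r$-covering argument gives
$$
\cH^\beta(A_j)\le C\,j\,\mu(U).
$$
First, applied with $U=V^\perp$, this shows $\cH^\beta(A_j)<\infty$. Since $\beta<n-m$, it follows that $\cH^{n-m}(A_j)=0$, and hence $\mu(A_j)=0$ because $\mu$ is absolutely continuous. By outer regularity of the finite measure $\mu$, we can choose open sets $U\supset A_j$ with $\mu(U)$ arbitrarily small. Therefore $\cH^\beta(A_j)=0$.

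Taking the union over $j$ and over the exhaustion of $\Omega$ finishes the proof. I expect the only delicate points to be the H\"older bookkeeping in Step 1 and the passage from ``$\cH^\beta$-$\sigma$-finite'' to ``$\cH^\beta$-null'' in Step 3, which is exactly where $\beta<n-m$ and the absolute continuity of $\mu$ are needed.
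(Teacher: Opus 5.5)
Your proof is correct, but it takes a route dual to the paper's.

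The paper argues by contradiction. It places a Frostman measure $\nu$ on $\Exc(\alpha)$ with $\nu(B(\ba,r))\le r^\beta$ and runs Kaufman's Morrey--H\"older covering argument along the fibers of the projection onto $V^\perp$. This shows $\int \cH^\alpha_\eps(f(V_\ba\cap\Omega))\,d\nu(\ba)\to 0$, so $\nu$-almost every leaf has $\cH^\alpha$-null image.

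You instead push the energy $\|Df\|^p\,dx$ forward to $V^\perp$. This gives $\mu = g\,\cL^{n-m}$ with $g(\ba)=\int_{V_\ba\cap\Omega}\|Df\|^p\,d\cH^m \in L^1$. You then prove a pointwise leafwise bound $\cH^\alpha_{\eps(\delta)}(f(V_\ba\cap U))\lesssim (\delta^{-\beta}\mu(P_\delta(\ba)))^{\alpha/p}$ for $U\Subset\Omega$. You finish with the classical density lemma (as in Evans--Gariepy): an $L^1$ function on $\R^{n-m}$ has vanishing upper $\beta$-density outside an $\cH^\beta$-null set whenever $\beta<n-m$. So the paper tests the exceptional set against measures of $\beta$-growth, while you bound the $\beta$-density of the energy measure.

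What your version buys:
\begin{itemize}
\item an explicit superset of the exceptional set, determined only by the leafwise energy $g$;
\item no contradiction argument;
\item no use of Frostman's lemma, so no need to know that $\Exc(\alpha)$ is Borel or analytic, or that $\ba\mapsto \cH^\alpha_\eps(f(V_\ba\cap\Omega))$ is measurable. Your Vitali step works with outer measures of arbitrary sets.
\end{itemize}
The paper's version follows Kaufman's proof more literally. It also states the conclusion directly as an averaged content estimate valid for every $\beta$-Frostman measure on $V^\perp$.

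One side remark in your Step 2 needs correcting. The hypothesis that actually drives the argument is $\alpha>m$, i.e.\ $\beta<n-m$, used in Step 3. The upper bound on $\alpha$ only makes $\beta>0$ and guarantees $\alpha<p$ for your H\"older step. At the endpoint $\beta=0$ the statement is not false. It asserts that every leaf image is $\cH^{pm/(p-n+m)}$-null, which is exactly Proposition \ref{prop:kaufman} applied to the $\sigma$-finite $\cH^m$ set $V_\ba\cap\Omega$.
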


Of course, $\beta(m) = n-m$. We also note that $\beta(\tfrac{pm}{p-n+m}) = 0$. In \cite{bmt:foliations} the authors also showed that the result is sharp in the following sense.

\begin{theorem}[Balogh--Monti--Tyson]\label{th:bmt-sharpness}
Given $p>n$, $m \in \{1,\ldots,n-1\}$, $V \in G(n,m)$, and a set $A \subset V^\perp$ such that $\dim_H A \ge \beta(\alpha)$, there exists a continuous map $f \in W^{1,p}(\R^n:\R^n)$ so that $\dim_H f(V_\ba \cap \Omega) \ge \alpha$ for every $\ba \in A$.
\end{theorem}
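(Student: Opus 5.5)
We prove Theorem \ref{th:bmt-sharpness} by an explicit construction. We may take $V=\R^m\times\{0\}$, so $V^\perp=\{0\}\times\R^{n-m}$ and $f(x,\ba)$ has $x\in\R^m$, $\ba\in\R^{n-m}$. We may also assume $A$ is compact, lies in a unit cube, and has $\cH^{\beta}(A)>0$, by passing to a compact subset via Frostman's lemma. If $\dim_H A>\beta$ we lower the target to any $\beta'<\dim_H A$; the conclusion "$\ge\alpha$'' is then handled by a limiting family of exponents. Fix a Frostman measure $\nu$ on $A$ with $\nu(B(\ba,r))\le r^{\beta}$.

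\emph{The model on a single slice.} For each parameter we first build a map that stretches one slice $V_\ba$ to dimension $\alpha$. This is the Kaufman-type Cantor construction. At generation $k$ we partition $\R^m$ into cubes of side $2^{-k}$. Inside each cube we place a smooth bump of height $h_k=2^{-k m/\alpha}$ taking values in the $n$ coordinate directions (we have $n\ge m+1$ target directions available). We choose orientations so that the images of the $2^{km}$ subcubes at scale $2^{-k}$ are pairwise separated at scale $h_k$. The image of a slice is then a self-similar-like Cantor set with $2^{km}$ pieces of size $\approx h_k$, which has dimension $m/(m/\alpha)=\alpha$. We verify this by the mass distribution principle, pushing forward Lebesgue measure on the slice.

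\emph{Localizing in the parameter.} To make the map Sobolev, we switch on the generation-$k$ bumps only in an $r_k$-neighbourhood of $A$ in the $\ba$-variable, with $r_k=2^{-k}$. We cover $A$ by $N_k\lesssim r_k^{-\beta}$ balls (box counting for the Frostman-regular part; otherwise we use the $\nu$-weighted cover). We then multiply the generation-$k$ layer by a cutoff in $\ba$ at scale $r_k$. The gradient of that layer is $\approx h_k/2^{-k}=2^{k(1-m/\alpha)}$, and its support has measure $\approx N_k r_k^{n-m}$ in $\ba$. Hence
\[
\int |D f_k|^p \lesssim 2^{kp(1-m/\alpha)}\, 2^{k\beta}\, 2^{-k(n-m)}.
\]
The exponent is $p(1-\tfrac m\alpha)+\beta-(n-m)$, which is exactly $0$ when $\beta=\beta(\alpha)$. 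So each layer has bounded energy but the sum diverges. We therefore perturb: we use $h_k=2^{-km/\alpha}k^{-2}$, or target $\alpha-\eta$ and let $\eta\to0$ with a convergent series of exponents. This makes $\sum_k\|Df_k\|_p<\infty$, so the sum converges in $W^{1,p}$ and uniformly, giving continuity. The logarithmic loss does not change dimensions, so $\dim_H f(V_\ba)\ge\alpha$ is preserved. Combining layers with $\eta_j\to0$ supported on disjoint scale ranges gives a single map achieving $\ge\alpha$.

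\emph{Main obstacle.} The hard step is showing that summing layers does not destroy the Cantor separation on each slice $V_\ba$, $\ba\in A$. We need three facts. First, the coarser layers $f_1,\dots,f_{k-1}$ are essentially constant at scale $2^{-k}$ on the support of layer $k$; this uses the smoothness of the earlier bumps, $|Df_{j}|\cdot 2^{-k}\ll h_k$ for $j<k$. Second, the finer layers are uniformly small, $\sum_{j>k}h_j\ll h_k$. Third, for $\ba\in A$ all cutoffs equal $1$ near $\ba$, so every layer is active. We would first try geometric gaps between the active generations to enforce these domination inequalities, and then apply the mass distribution principle on each slice uniformly in $\ba\in A$.
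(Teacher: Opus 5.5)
The survey quotes this theorem from \cite{bmt:foliations} without proof, so I assess your argument on its own terms. It has two genuine gaps.

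\textbf{Covering numbers of $A$.} Your energy bound rests on $N_k\lesssim r_k^{-\beta}$, but Frostman's lemma gives the opposite inequality: $\nu(B(\ba,r))\le r^\beta$ implies $N(A,r)\ge \nu(A)\,r^{-\beta}$. A compact set with $\cH^\beta(A)>0$ can have upper box dimension $\gamma$ as large as $n-m$; take, for instance, a $\beta$-dimensional Cantor set together with a countable compact set of full box dimension.

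For such $A$, the region where your $k$-th cutoff is on has measure $\ge r_k^{\,n-m-\gamma+o(1)}$ along a sequence of $k$. Since $p(1-\frac m\alpha)=n-m-\beta$, the $k$-th layer then costs $\gtrsim 2^{k(\gamma-\beta)-o(k)}$, which no factor $k^{-2}$ can repair. A ``$\nu$-weighted cover'' does not reduce the Lebesgue measure of that region. If you instead weight the amplitudes by $\nu$, the slice maps depend on $\ba$. Then $\dim_H f(V_\ba)\ge\alpha$ would require $\nu(B(\ba,r))\ge r^{\beta+o(1)}$ at all small scales, and this fails $\nu$-a.e.\ when every subset of $A$ of positive $\nu$-measure has packing dimension $>\beta$.

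So, as written, you only handle parameter sets with $N(A,r)\le r^{-\beta-o(1)}$, such as Ahlfors regular ones. The missing ingredient is either a selection lemma producing such a subset $A'\subset A$ with $\dim_H A'\ge\beta$, or a genuinely measure-theoretic argument (Kaufman-type random series with energies).

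Passing to a subset also changes the conclusion: you get it only for $\ba$ in that subset. This weakening is unavoidable and should be stated explicitly. By Theorem~\ref{th:bmt-foliations}, the set $\{\ba:\dim_H f(V_\ba\cap\Omega)\ge\alpha\}$ is contained in $\Exc(\alpha')$ for every $\alpha'\in(m,\alpha)$, so it has Hausdorff dimension at most $\beta(\alpha)$. Hence the statement as literally worded fails whenever $\dim_H A>\beta(\alpha)$.

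Finally, the case $\cH^\beta(A)=0$ is not handled by a ``limiting family of exponents''. Pieces on which slices reach only $\alpha_j<\alpha$ never produce a single slice of dimension $\ge\alpha$; you need subpolynomial (gauge) control on one set.

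\textbf{The slice construction.} This is the heart of the proof, and it is left at ``we would first try''. Moreover, the facts you list are false for consecutive generations. Since $0<\frac m\alpha<1$, we have $\sum_{j>k}h_j=h_k\frac{2^{-m/\alpha}}{1-2^{-m/\alpha}}>h_k$ and $2^{-k}|Df_{k-1}|\approx 2^{-(1-m/\alpha)}h_k>\frac12 h_k$.

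The claimed pairwise separation of the images of the $2^{km}$ subcubes is also impossible, because face-adjacent closed cubes have intersecting images. To fix this you would need to:
\begin{itemize}
\item pass to a Cantor subset of each slice, using plateau cores with gaps as in Example~\ref{ex:gv};
\item group generations into blocks of length $L_i$;
\item place the roughly $2^{L_i m}$ sub-pieces of a block in an $n$-dimensional packing inside the parent's image. This is possible because $\alpha<n$, and it is more than choosing among $n$ coordinate directions.
\end{itemize}

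The block lengths must tend to infinity, to recover exactly $\alpha$, but must also be $o(k_i)$. On a generation-$k_i$ cube the coarser layers are Lipschitz with constant $\lesssim h_{k_i}2^{k_i}$. So the pushed-forward Lebesgue measure of a ball of radius $\approx h_{k_{i+1}}$ is at least about $h_{k_{i+1}}^{\alpha}\,2^{(k_{i+1}-k_i)m(1-m/\alpha)}$. This destroys the mass distribution bound if the gaps grow geometrically. None of this, nor the Sobolev bookkeeping for the modified layers, is carried out.
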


The proof of Theorem \ref{th:bmt-foliations} in \cite{bmt:foliations} couples Kaufman's argument with the volume growth version of Frostman's lemma to obtain almost sure conclusions. The proof is streamlined by observing that the usual dyadic decomposition of $\R^n$ is well-adapted to the orthogonal splitting of $\R^n=V \oplus V^\perp$. Arguing by contradiction, let $\Exc(\alpha) := \{ \ba \in V^\perp : \cH^\alpha(f(V_\ba \cap \Omega)) > 0 \}$, assume that $\cH^\beta(\Exc(\alpha)) > 0$, and let $\nu$ be a Radon measure supported on $\Exc(\alpha)$ such that $\nu(B_{V^\perp}(\ba,r)) \le r^\beta$ for any $\ba \in V^\perp$ and $r>0$. Implementing the previous argument of Kaufman along parallel fibers of the orthogonal projection $\pi:\R^n \to V^\perp$, one deduces that
$$
\int_{V^\perp} \cH^\alpha_\eps(f(V_\ba \cap \Omega)) \, d\nu(\ba) \to 0 \qquad \mbox{as $\eps \to 0$.}
$$
It follows that $\cH^\alpha(f(V_\ba \cap \Omega)) = 0$ for $\nu$-a.e.\ $\ba$, but this contradicts the choice of the exceptional set $\Exc(\alpha)$ and the fact that $\nu$ is a positive Borel measure on $\Exc(\alpha)$.

Theorem \ref{th:bmt-foliations} applies in particular to quasiconformal homeomorphisms of domains in $\R^n$, where it yields new information about the dimension distortion for generic parallel subspaces. For simplicity, and because results in this case are stated the most cleanly due to Astala's theorem, we only state the relevant theorem in the plane.

\begin{corollary}\label{cor:bmt-qc-plane}
Let $f:\Omega \to \Omega'$ be a $k$-quasiconformal homeomorphism between planar domains, let $L \in G(2,1)$, and let $1<\alpha<1+k$. Then $\dim_H f(L_\ba \cap \Omega) \le \alpha$ for $\cH^\beta$-a.e.\ $\ba \in L^\perp$ and $\beta = 1-(1+\tfrac1k)(1-\tfrac1\alpha)$.
\end{corollary}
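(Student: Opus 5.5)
The plan is to deduce the corollary from Theorem \ref{th:bmt-foliations}, using Astala's Theorem \ref{th:qc-higher-integrability-and-haus-dim-distortion-2d} to supply the Sobolev exponent. A $k$-QC map has dilatation $K = \tfrac{1+k}{1-k}$, so Astala gives $p^\Sob(2,K) = \tfrac{2K}{K-1} = \tfrac{1+k}{k}$. Hence $f \in W^{1,p}_\loc(\Omega:\R^2)$ for every $p<1+\tfrac1k$. The map $f$ is continuous, and such $p$ exceed $n=2$, so $f$ is supercritical locally.

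First we localize. Cover $\Omega$ by countably many relatively compact subdomains $U_j \Subset \Omega$ with $f \in W^{1,p}(U_j)$. Each $L_\ba \cap \Omega$ is a countable union of the sets $L_\ba \cap U_j$, and a countable union of $\cH^\beta$-null exceptional sets is again null. So it suffices to treat one $U = U_j$.

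Next we apply Theorem \ref{th:bmt-foliations} with $n=2$ and $m=1$. For $p$ close to $1+\tfrac1k$, the admissible range is $1<\alpha'<\tfrac{p}{p-1}$. This endpoint tends to $1+k$ as $p \to 1+\tfrac1k$. For any $\alpha'$ in this range, the set of $\ba$ with $\cH^{\alpha'}(f(L_\ba\cap U))>0$ is $\cH^{\beta_p(\alpha')}$-null, where $\beta_p(\alpha') = 1 - p(1-\tfrac1{\alpha'})$.

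The remaining step, which is the main point of care, is the limiting argument. We need $\dim_H f(L_\ba\cap U)\le\alpha$ for $\cH^\beta$-a.e.\ $\ba$, where $\beta = \beta_{p_*}(\alpha)$ with $p_* = 1+\tfrac1k$. Take sequences $\alpha_i \searrow \alpha$ and $p_i \nearrow p_*$. Then $\beta_{p_i}(\alpha_i) \to 1 - p_*(1-\tfrac1\alpha) = \beta$, but the convergence is in the wrong direction. Since $\alpha_i > \alpha$, the exponent $\beta_{p_i}(\alpha_i)$ involves a larger $\alpha$, while $p_i < p_*$ pushes it up. The two effects compete, so the sign of the error depends on how the sequences are paired.

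The fix is to choose, for each $i$, a pair $(p_i, \alpha_i)$ with $\beta_{p_i}(\alpha_i) \le \beta$. This requires $p_i(1-\tfrac1{\alpha_i}) \ge p_*(1-\tfrac1\alpha)$. It is possible because $1-\tfrac1{\alpha_i} > 1-\tfrac1\alpha$ strictly, so $p_i$ can be taken slightly below $p_*$ depending on $\alpha_i$. We also need $\alpha_i < \tfrac{p_i}{p_i-1}$; this holds for $\alpha_i$ near $\alpha<1+k$ and $p_i$ near $p_*$.

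With such pairs, $\cH^{\beta_{p_i}(\alpha_i)}$-null sets are $\cH^\beta$-null, because $\beta \ge \beta_{p_i}(\alpha_i)$. So for $\cH^\beta$-a.e.\ $\ba$ we have $\cH^{\alpha_i}(f(L_\ba\cap U))=0$, and hence $\dim_H \le \alpha_i$. Intersecting over $i$, a countable intersection of full-measure sets, gives $\dim_H f(L_\ba\cap U)\le\alpha$ for $\cH^\beta$-a.e.\ $\ba$.

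Finally, substituting $p_* = 1+\tfrac1k$ into $\beta = 1 - p_*(1-\tfrac1\alpha)$ gives the formula $\beta = 1-(1+\tfrac1k)(1-\tfrac1\alpha)$ in the statement.
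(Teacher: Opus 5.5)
Your proposal is correct and follows the paper's route: the corollary comes from feeding Astala's exponent $p^\Sob(2,K)=\tfrac{2K}{K-1}=1+\tfrac1k$ (Theorem~\ref{th:qc-higher-integrability-and-haus-dim-distortion-2d}) into Theorem~\ref{th:bmt-foliations} with $n=2$, $m=1$. Your localization to subdomains $U_j\Subset\Omega$ and your choice of pairs $(p_i,\alpha_i)$ with $\beta_{p_i}(\alpha_i)\le\beta$ correctly supply the details the paper leaves implicit: $f$ is only known to lie in $W^{1,p}_\loc$ for $p<1+\tfrac1k$, which is why the conclusion is stated as $\dim_H\le\alpha$ rather than $\cH^\alpha=0$.
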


Since $k < 1$, we have $\beta < \tfrac2\alpha - 1 < 1 = \dim L^\perp$. In \cite{bmt:foliations}, examples of quasiconformal maps were given demonstrating sharpness of this weaker, dilatation-independent consequence of Corollary \ref{cor:bmt-qc-plane}, but for the box-counting dimension of $f(L_\ba \cap \Omega)$ rather than the Hausdorff dimension. Such examples fail to be sharp for the dilatation-dependent statement in Corollary \ref{cor:bmt-qc-plane}. Indeed, since the stronger bounds for Hausdorff dimensions of planar quasilines are given by Smirnov's theorem \ref{th:haus-dim-quasilines} relative to the bounds in \eqref{eq:qc-bound-quasilines-1}, one in fact anticipates that Corollary \ref{cor:bmt-qc-plane} can be improved.

\begin{question}\label{q:k-dependent}
What are optimal $k$-dependent estimates for the generic distortion of Hausdorff dimension of parameterized families of parallel lines under planar $k$-quasiconformal maps?
\end{question}

Rather than focusing on dilatation, one may instead seek quasiconformal maps which are sharp from the perspective of Sobolev membership, consistent with the bounds in Proposition \ref{prop:kaufman}. (Note that this question is independent of the determination of the sharp Sobolev higher integrability exponent for quasiconformal mappings in higher dimensions.) In \cite{btw:increase}, Balogh, Wildrick and the author construct quasiconformal mappings, in arbitrary dimensions, illustrating such sharpness.

\begin{theorem}[Balogh--Tyson--Wildrick]\label{th:btw-increase}
Fix $p>n\ge 2$, $1 < \alpha < \tfrac{p}{p-n+1}$, and $L \in G(n,1)$. Then for any $\eps>0$ there exists $A \subset L^\perp$ with $\dim_H A > (n-1) - p(1-\tfrac1\alpha) - \eps$ and a QC map $f$ in $W^{1,p}(\R^n:\R^n)$ so that $\dim_H f(L_\ba) \ge \alpha$ for each $\ba \in A$.
\end{theorem}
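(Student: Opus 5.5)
We construct $f$ as a limit of compositions of quasiconformal maps that are self-similar on a Cantor grid, in the spirit of Example \ref{ex:gv}. Simultaneously we construct a Cantor set $A \subset L^\perp$ of parameters, self-similar as well. Take $L$ to be the $x_1$-axis, so that $L^\perp = \R^{n-1}$.

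\textbf{Base step.} Fix a large integer $N$ and small parameters. In $Q=[0,1]^n$ we choose two families of subcubes:
\begin{itemize}
\item $M_1 \approx N^{\gamma}$ subcubes of side $1/N$ along the $x_1$-direction (the ``line'' direction);
\item in the transverse directions, a subfamily of $N^{\beta'}$ columns, where $\beta' \approx (n-1) - p(1-\tfrac1\alpha)$.
\end{itemize}
The base map $f_1$ is a QC map of $Q$ equal to the identity near $\partial Q$. It sends each selected subcube $Q_i$ (meeting a selected column) onto a cube $Q_i'$ of larger side $\rho = N^{-\theta}$, with $\theta<1$, by a similarity. The images $Q_i'$ are arranged as a fat, roughly $\alpha$-dimensional configuration, for instance a $\gamma$-dimensional pattern inside $f_1(\text{column})$. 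Away from the selected columns, $f_1$ compresses the complementary region; this compression is what pays for the expansion.

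\textbf{Iteration.} We iterate by conjugating with the similarities $Q \to Q_i$ and $Q \to Q_i'$, exactly as in Example \ref{ex:gv}. This keeps the dilatation bounded by a fixed $K$, and in the limit gives a QC map $f$.

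\textbf{Dimension of the parameter set.} For each $\ba$ in the limiting transverse Cantor set $A$, the line $L_\ba$ passes through $N$ selected cubes at each generation, and these are mapped to cubes of relative size $\rho$. Hausdorff dimension is then controlled by Hutchinson's formula. Applying the mass distribution principle with the natural self-similar measure on $f(L_\ba)\cap$(limit set) gives $\dim_H f(L_\ba) \ge \tfrac{\log N}{\theta\log N}$. Since the images of distinct pieces are well separated, the OSC-type separation holds. Choosing $\theta = 1/\alpha$ yields the bound $\ge \alpha$. Meanwhile $\dim_H A = \beta'\,$, again from the self-similar structure.

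\textbf{Sobolev membership (the main obstacle).} The key difficulty is verifying $f \in W^{1,p}$ while keeping $\dim_H A$ as large as $(n-1)-p(1-\tfrac1\alpha)-\eps$. At generation $k$, the selected cubes have side $N^{-k}$ and are stretched by the factor $N^{k(1-\theta)}$. Their count is $N^{k}\cdot N^{k\beta'}$, namely $N$ per line times $N^{\beta'}$ columns. The contribution of these cubes to $\int|Df|^p$ is therefore
$$
N^{k(1+\beta')}\, N^{-kn}\, N^{kp(1-\theta)}.
$$
The transition shells contribute at most a comparable amount, since the dilatation is bounded there. The geometric series in $k$ converges exactly when
$$
1+\beta'-n+p(1-\tfrac1\alpha)<0, \qquad\text{i.e.}\qquad \beta'<(n-1)-p(1-\tfrac1\alpha).
$$
This is the source of the $\eps$ loss.

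Two constraints must be checked carefully:
\begin{enumerate}
\item The image cubes $Q_i'$, of side $N^{-1/\alpha}$, must be packable disjointly inside $Q$ together with the QC interpolation, with uniform dilatation independent of $N$. Since $N\cdot N^{\beta'}$ cubes of side $N^{-1/\alpha}$ must fit in volume $1$, this requires $N^{1+\beta'-n/\alpha}\lesssim 1$. This inequality is implied by the convergence condition together with $\alpha<\tfrac{p}{p-n+1}$.
\item The compressed complementary region must remain QC with controlled distortion.
\end{enumerate}
I would handle both by building $f_1$ piecewise linearly in a dyadic-like fashion, using radial-stretch-type annular interpolations between each small cube and its enlarged image. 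The regions between expanded cubes are compressed by bi-Lipschitz-on-shells maps, and their $L^p$ contribution is estimated directly.
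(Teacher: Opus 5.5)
One step fails as written. You let each line $L_\ba$ meet $N$ selected cubes of side $1/N$ per generation, and you count $N^{1+\beta'}$ cubes. But then the selected cubes fill each column and are face-adjacent. In an Example~\ref{ex:gv}-type construction, $f_1$ is an orientation-preserving similarity on each selected cube, and two such similarities agreeing on a common face coincide. So the whole column would be mapped by a single similarity of ratio $N^{1-1/\alpha}>1$, which cannot fit inside $Q$ once $N$ is large. Even apart from that, the images could not be separated, which your mass-distribution step needs; adjacent selected columns cause the same problem.

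The repair is the one your base step hints at with $N^\gamma$. Take $m_1=\lfloor N/2\rfloor$ alternate cubes in each selected column, take the $m_2$ columns from a mesh-$2$ sublattice, and set $\rho=m_1^{-1/\alpha}$. Then $\dim_H f(L_\ba)\ge\alpha$ holds exactly. The image of $L_\ba\cap E$ is a Moran set rather than a self-similar one, since the column varies with the digits of $\ba$, but equal-splitting mass distribution still applies.

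Also, $Df$ lives on the conjugated copies $\varphi_w(U)$ of the transition region, not on the selected cubes. Generation $k$ contributes exactly $\bigl(m_1m_2\rho^pN^{p-n}\bigr)^k\int_U|Df_1|^p$. This comes from self-similar scaling, not from bounded dilatation. Convergence now reads $\frac{\log m_2}{\log N}<n-p+\frac{\log m_1}{\log N}\bigl(\frac p\alpha-1\bigr)$. Since $p/\alpha>1$, the right side tends to $(n-1)-p(1-\frac1\alpha)$ as $N\to\infty$, and the loss is absorbed in $\eps$.

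With that fix your argument goes through, and it differs from the paper's sketch in how the target is handled. The sketch insists on packing the image squares as closely as possible. To get around number-theoretic obstructions it uses image rectangles oriented alternately along the two axes, which become squares again after two stages.

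In your parametrization the packing is slack. The convergence condition gives $m_1m_2\rho^n<(N\rho)^{n-p}\le N^{(n-p)(1-1/\alpha)}\to0$, so the image cubes fit on a grid of mesh $2\rho$ with room to spare. This uses only $p>n$ and $\alpha>1$; the hypothesis $\alpha<\frac{p}{p-n+1}$ is what makes $(n-1)-p(1-\frac1\alpha)$ positive, not what controls packing.

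Nor do you need dilatation bounds uniform in $N$: $N$ is fixed once $\eps$ is, and the theorem only asks for some quasiconformal $f$. So $f_1$ can be any bi-Lipschitz homeomorphism of $Q$ that fixes $\partial Q$ and agrees with prescribed orientation-preserving similarities $Q_i\to Q_i'$. Such a map exists by a shrink--move--rotate--expand isotopy, because the configuration space of finitely many distinct points in a domain of $\R^n$, $n\ge2$, is connected. Note that the in-place radial enlargements of your last paragraph would overlap; the cubes must be relocated, and the isotopy does this.

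The paper's route produces an explicit, tightly packed map. Yours trades explicitness for a soft existence argument with constants depending on $N$, which is harmless here.
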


The construction draws inspiration from Example \ref{ex:gv}, but requires a small adjustment for the sharpness. To simplify the exposition we set $n=2$. One again works with families of disjoint closed squares located along the fibers of $P_{L^{\perp}}$, but in order to increase dimension in the target by close to the optimal amount one must pack the image squares as closely as possible. Such packing cannot always be done by squares for simple number-theoretic considerations. However, we may replace cubes in the target with rectangles oriented along the $x$-axis, and switch to rectangles oriented along the $y$-axis at the second stage of the iteration. After two stages we again have a square in the target and the self-similar iteration proceeds as before. With this simple idea, it remains to carry out the technical estimates to show that the optimal choices for $\dim_H A$ vis-\`a-vis $\dim_H f(L_\ba)$, $\ba \in A$, and the Sobolev exponent $p$ are attained.

Bishop, Hakobyan and Williams \cite{bhw:ahlfors} gave an earlier example in the case $n=2$. The example in \cite{bhw:ahlfors} exhibits more striking distortion along the parallel lines $L_\ba$, namely, $\dim_H f(F) = \alpha \dim_H F$ for every $\ba \in A$ and every Borel $F \subset L_\ba$. Thus $f|_{L_\ba}$ behaves similar to a snowflake mapping for every choice of $\ba \in A$. The construction of such a map uses heavily the planar setting, relying on conformal mapping tools. The authors of \cite{bhw:ahlfors} also prove a number of other intriguing results about quasiconformal dimension distortion. For instance, here is a special case of \cite[Theorem 1.1]{bhw:ahlfors}.

\begin{theorem}[Bishop--Hakobyan--Williams]
Let $f:\R^n \to \R^n$ be quasiconformal, $n \ge 2$, and let $E \subset \R^n$ be the invariant set for a self-similar itferated function system satisfying the open set condition. Then $\dim_H f(E+\by) = \dim_H E$ for $\cL^n$-a.e.\ $\by \in \R^n$.
\end{theorem}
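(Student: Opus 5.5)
Set $s = \dim_H E$ and let $\mu = \cH^s \restrict E$, which is Ahlfors $s$-regular by Hutchinson's theorem. We need two inequalities. By symmetry it is enough to prove $\dim_H f(E+\by) \le s$ for a.e.\ $\by$: the lower bound comes from the same argument applied to $f^{-1}$, which is quasiconformal, and to the Lebesgue-generic family of translates of the image set. That step needs care, since $f(E+\by)$ is not itself a translate, so we will also give a direct argument for the lower bound. For the upper bound we fix $t > s$ and show that $\int_B \cH^t(f(E+\by))\,d\by = 0$ for every ball $B$. The mechanism is Fubini: a fixed set, translated generically, sees $\|Df\|$ only through its Lebesgue average.

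The steps are as follows. First, localize. Since $f$ is $H$-WQS in Whitney balls, for any cube $Q$ of side $r$ inside a fixed compact region we have $\diam f(Q)^n \lesssim |f(Q)|$, exactly as in \eqref{eq:proof-1}. Second, cover $E$ at scale $r = 2^{-m}$ by its $\asymp r^{-s}$ self-similar pieces, each contained in a cube $Q_j$ of side $\asymp r$. Then $E+\by$ is covered by the cubes $Q_j+\by$, which gives
\begin{equation*}
\sum_j \diam f(Q_j+\by)^{t} \lesssim \sum_j |f(Q_j+\by)|^{t/n}.
\end{equation*}
Third, integrate in $\by$ over $B$ and apply Jensen, using $t/n<1$:
\begin{equation*}
\int_B |f(Q_j+\by)|^{t/n}\,d\by \le |B|^{1-t/n}\Big(\int_B\int_{Q_j+\by} Jf\Big)^{t/n} \lesssim r^{t}\,|f(B')|^{t/n},
\end{equation*}
where $B'$ is a slight enlargement of $B$. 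Summing over $j$ gives the bound $\lesssim r^{t-s}\to 0$. Fatou then gives $\int_B \cH^t_\infty(f(E+\by))\,d\by = 0$. Since $f$ is uniformly continuous, the diameters $\diam f(Q_j+\by)$ are uniformly small, so the same estimate controls $\cH^t_\delta$ and hence $\cH^t$. Note that this step uses only $Jf\in L^1_\loc$, not higher integrability.

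For the lower bound we use the energy version of Frostman's lemma. Take $\nu_\by = f_\#(\mu(\cdot-\by))$ and show that $\int_B I_{t}(\nu_\by)\,d\by<\infty$ for every $t<s$. This requires bounding
\begin{equation*}
\int_B\iint |f(x+\by)-f(z+\by)|^{-t}\,d\mu(x)\,d\mu(z)\,d\by .
\end{equation*}
Write $w = x+\by$ and apply Fubini in $\by$. Quasisymmetry gives $|f(w+h)-f(w)| \gtrsim \ell_f(w,|h|)$, and therefore
\begin{equation*}
|f(w+h)-f(w)|^{n} \gtrsim |f(B(w,|h|))|.
\end{equation*}
So it suffices to show
\begin{equation*}
\int_{B'} |f(B(w,\rho))|^{-t/n}\,dw \lesssim \rho^{-t}
\end{equation*}
uniformly in $\rho$. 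Once this holds, integrating against $\mu\times\mu$ with $\iint |x-z|^{-t}\,d\mu\,d\mu<\infty$ finishes the argument.

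\textbf{Main obstacle.} The needed estimate amounts to a reverse Hölder, or $A_\infty$, property of the Jacobian: negative powers of averages of $Jf$ must be integrable. We plan to invoke Gehring's theorem in its equivalent form, namely that $Jf$ is an $A_\infty$ weight, which gives $Jf^{-\delta}\in L^1_\loc$ for some $\delta>0$. We will try first to show that $\int |f(B(w,\rho))/\rho^n|^{-t/n}\,dw$ is bounded using only the maximal function $M(Jf^{-\delta})$. This works for $t/n\le\delta$, but for larger $t$ the exponent is a genuine problem. Our fallback is to replace the fixed exponent by iterating over scales: a self-improving argument for the doubling measure $f^*\cL^n$ via the fact that $\mu$-typical points see a typical scale distribution under a random translation. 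We expect this step to be the hardest part of the proof.
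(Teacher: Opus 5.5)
The survey only quotes this theorem from Bishop--Hakobyan--Williams and gives no proof, so I am judging your argument on its own terms.

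\textbf{Upper bound.} This part is sound. For $s<t<n$, Fubini, Jensen and $\diam f(Q)^n\lesssim|f(Q)|$ give $\int_B\sum_j\diam f(Q_j+\by)^t\,d\by\lesssim r^{t-s}$. The integrand is continuous in $\by$, so the Fatou step goes through, and the case $s=n$ is trivial.

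\textbf{Lower bound: the gap.} The step you flag as the main obstacle is not merely hard. The estimate you reduce to is false, and so is the averaged energy bound behind it. Take the radial stretch $f(x)=|x|^{a-1}x$ with $a>1$, which is QC. Then $|f(B(w,\rho))|\approx\rho^n\max\{|w|,\rho\}^{n(a-1)}$, so
\[ \int_{B(0,1)}|f(B(w,\rho))|^{-t/n}\,dw\approx\rho^{n-ta}\gg\rho^{-t}\qquad(\rho\to0) \]
as soon as $t(a-1)>n$. Likewise, suppose $B$ contains the $1$-neighbourhood of $-E$. Since $|f(w+h)-f(w)|\approx|h|\max\{|w|,|h|\}^{a-1}$, we get $\int_B I_t(\nu_\by)\,d\by\gtrsim\iint|x-z|^{n-ta}\,d\mu(x)\,d\mu(z)=\infty$ for every $t\ge(s+n)/a$. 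Moreover $(s+n)/a<s$ once $a>1+n/s$.

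So for large dilatation the expected $t$-energy genuinely diverges for $t$ near $s$. No $A_\infty$ bound or iteration over scales can rescue a route that bounds $\int_B I_t(\nu_\by)\,d\by$; your $A_\infty$ estimate yields only $\dim_H f(E+\by)\ge\min\{s,n\delta\}$. Yet the theorem holds for this $f$: for a.e.\ $\by$ the translate $E+\by$ misses the origin, where all the compression lives. The divergence comes from a Lebesgue-null set of bad translates, which an average over $\by$ cannot discard.

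\textbf{The missing idea.} Discard that null set first, and then use the volume-growth form of Frostman's lemma pointwise instead of energies.
\begin{enumerate}
\item Let $G$ be the set of points where $f$ is differentiable with $\det Df>0$, so $|\R^n\setminus G|=0$.
\item By Fubini, $\int_{\R^n}\mu(\{x:x+\by\notin G\})\,d\by=\mu(\R^n)\,|\R^n\setminus G|=0$. Hence for a.e.\ $\by$ and $\mu$-a.e.\ $x$, the point $w=x+\by$ satisfies $|f(z)-f(w)|\ge c|z-w|$ for $|z-w|\le\rho_0$, with $c,\rho_0>0$ depending on $w$.
\item Since $f$ is a homeomorphism, $f^{-1}(B(f(w),r))\subset B(w,r/c)$ for $r<c\rho_0$. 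Therefore $\nu_\by(B(f(w),r))\le\mu(B(x,r/c))\lesssim(r/c)^s$.
\item Thus $\nu_\by$ has lower local dimension at least $s$ at $\nu_\by$-a.e.\ point, and the mass distribution principle gives $\dim_H f(E+\by)\ge s$.
\end{enumerate}
This direction needs neither quasisymmetry nor higher integrability, only a.e.\ differentiability with invertible differential.
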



\section{Conformal dimension}\label{sec:conformal-dimension}

In this section, we provide a short introduction to the theory of conformal dimension, emphasizing its relevance for Euclidean quasiconformal dimension distortion. The following definition was introduced by Pansu in 1989 \cite{pan:conformal-dimension}. We first note that the notion of Hausdorff dimension $\dim_H X$ for a metric space $(X,d)$ is defined exactly as in section \ref{subsec:basic-definitions-and-notation}, using coverings of $X$ by subsets $A_i$. Similarly, the box-counting dimension $\dim_B X$ is well-defined for totally bounded metric spaces $(X,d)$ via the covering number $N(X,r)$, $r>0$.

\begin{definition}[Pansu]\label{def:conf-H-dim}
The {\it conformal (Hausdorff) dimension} of $(X,d)$, $C\dim_H X$, is the infimum of the values $\dim_H Y$ over all spaces $(Y,d')$ QS equivalent to $(X,d)$.
\end{definition}

In view of the role played by quasisymmetric maps in applications of analysis in metric spaces (e.g., to geometric group theory, dynamics on nonsmooth spaces, or analysis on fractals), {\it quasisymmetric invariants} are of great interest. Conformal dimension is among the most widely studied of these invariants. For many applications, a variant notion has received greater attention. We begin with a definition.

\begin{definition}
The {\it Assouad dimension} of $(X,d)$, $\dim_A X$, is the infimum of $s > 0$ so that there exists $C>0$ with $N(B(x,R),r) \le C(R/r)^s$ for all $x \in X$, $0<r<R$.
\end{definition}

A metric space $(X,d)$ is {\it doubling} if there exists $C>0$ so that $N(B(x,2r),r) \le C$ for all $x \in X$ and $r>0$. It is easy to see that $X$ is doubling if and only if $\dim_A X < \infty$. Assouad studied this notion of dimension (but under a different name) in \cite{ass:plongements} in connection with metric embedding problems. A key result from \cite{ass:plongements} is that $(X,d)$ QS embeds\footnote{In fact, Assouad proves that the snowflaked metric space $(X,d^\eps)$ bi-Lipschitz embeds into some finite-dimensional Euclidean space for any $0<\eps<1$. Compare the discussion in Example \ref{ex:snowflake}.} into some finite dimensional Euclidean space if and only if $\dim_A X < \infty$. 

It is instructive to compare Assouad dimension to box-counting dimension; both notions ask for uniform control on covering numbers $N(E,r)$, however, box-counting dimension only requires control on a global scale ($E=X$), while Assouad dimension requires uniform control across all locations and scales. The inequalities $\dim_H X \le \dim_B X \le \dim_A X$ for bounded spaces are clear, moreover, $\dim_H X \le \dim_A X$ for all $X$. All dimensions coincide and equal $n$ for any nonempty open subset of $\R^n$.

\begin{example}\label{ex:f-p}
For $p>0$, let $F_p = \{m^{-p}:m \in \N\} \cup \{0\}$. Then $\dim_H F_p = 0$. By comparing the gap size $\tfrac1{m^p} - \tfrac1{(m+1)^p} \approx \tfrac1{m^{p+1}}$ with the diameters of tails of the sequence, $\diam \{ j^{-p}:j \ge m\} \cup \{0\} = m^{-p}$, one concludes that $\dim_B F_p = \tfrac{1}{1+p}$. To compute the Assouad dimension, we study weak tangent spaces. Using Proposition \ref{prop:conformal-dimension-and-weak-tangents} below and the fact that the interval $[0,\infty)$ occurs as a weak tangent space for $F_p$, we find that $1 = \dim_A [0,\infty) \le \dim_A F_p \le 1$. Hence $\dim_A F_p = 1$.
\end{example}


In Example \ref{ex:f-p} we used a basic fact about Assouad dimension: it contracts under Gromov--Hausdorff convergence. Similar conclusions do not hold for the other notions of dimension considered in this survey. A pointed metric space $(Z,d_Z,z_0)$ is said to be a {\it weak tangent} of $(X,d_X)$ if there exist scales $r_m \to 0$ and points $x_m \in X$ so that $(X,r_m^{-1} d_X,x_m)$ converges (in the pointed Gromov--Hausdorff sense) to $(Z,d_Z,z_0)$. We refer to \cite[Chapter 6]{mt:conformal-dimension} for the definition of Gromov--Hausdorff convergence and its pointed variant. The following result is \cite[Proposition 6.1.5]{mt:conformal-dimension}.

\begin{proposition}\label{prop:conformal-dimension-and-weak-tangents}
Let $(Z,d_Z)$ be a weak tangent of $(X,d_X)$. Then $\dim_A Z \le \dim_A X$.
\end{proposition}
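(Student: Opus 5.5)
The plan is to prove directly from the definitions that any covering bound valid for $X$ uniformly over all locations and scales passes to $Z$. Fix $s>\dim_A X$ and $C$ with $N_X(B(x,R),r)\le C(R/r)^s$ for all $x\in X$, $0<r<R$. This bound is invariant under rescaling the metric by $r_m^{-1}$: balls and covering scales scale together, so $(X,r_m^{-1}d_X)$ satisfies the same inequality with the same $C$ and $s$. The goal is to show that $Z$ satisfies $N_Z(B(z,R),r)\le C'(R/r)^s$ with $C'$ depending only on $C$ and $s$. Letting $s\searrow\dim_A X$ then gives $\dim_A Z\le\dim_A X$.

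Fix $z\in Z$ and $0<r<R$. By pointed Gromov--Hausdorff convergence, there are $\epsilon_m$-isometries $\phi_m$ from large balls $B_Z(z_0,\rho)$ (with $\rho>d(z,z_0)+2R$) into $(X,r_m^{-1}d_X)$, with $\epsilon_m\to0$. I would use the following equivalent formulation: for every $\rho$ and $\epsilon>0$, for large $m$ there is a map $\phi:B_Z(z_0,\rho)\to X_m$ with distortion at most $\epsilon$, whose image is $\epsilon$-dense in the corresponding ball of $X_m$.

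Choose a maximal $r$-separated set $\{z_1,\dots,z_k\}$ in $B_Z(z,R)$. Its images $\phi(z_i)$ are $(r-\epsilon)$-separated and lie in $B_{X_m}(\phi(z),R+\epsilon)$. Take $\epsilon<r/4$. A ball of radius $r/4$ in $X_m$ contains at most one of these points, so
\[
k\le N_{X_m}\bigl(B(\phi(z),2R),r/4\bigr)\le C(8R/r)^s .
\]
By maximality, the balls $B(z_i,r)$ cover $B_Z(z,R)$. Each of them is covered by boundedly many sets of diameter $r$, again by applying the doubling bound inside $X_m$ and pulling back. Alternatively, and more simply, note that the Assouad covering function changes only by multiplicative constants depending on $s$ when one replaces \emph{sets of diameter $r$} by \emph{balls of radius $r$} or by the size of maximal $r$-separated sets. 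This gives $N_Z(B(z,R),r)\le C'(R/r)^s$.

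The only delicate point is this bookkeeping: making sure that the convergence is quantified on balls of radius large enough to contain $B(z,2R)$, and that the additive error $\epsilon$ is absorbed because $r$ is fixed before $m\to\infty$. Since $r$ and $R$ are fixed before $m$ is chosen, this causes no real difficulty. Counting via separated sets avoids any need for the approximating maps to be surjective onto $Z$; only an almost-isometric embedding of the ball in $Z$ into $X_m$ is used.
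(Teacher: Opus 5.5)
Your argument is correct and is essentially the standard proof. The paper gives no proof of its own, deferring to \cite[Proposition 6.1.5]{mt:conformal-dimension}, and that standard argument does what you do: it uses that the Assouad covering bound is unchanged under the rescalings $d_X\mapsto r_m^{-1}d_X$, pushes an $r$-separated subset of $B_Z(z,R)$ into $X_m$ by an $\epsilon$-isometry with $\epsilon$ small relative to the already-fixed scale $r$, and then counts. Your constants check out: running the separated-set count at scale $r/2$ gives $N_Z(B(z,R),r)\le 16^s C\,(R/r)^s$, and only the almost-isometric embedding of balls of $Z$ into $X_m$ is needed, not density of its image.
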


By analogy with Definition \ref{def:conf-H-dim}, the {\it conformal Assouad dimension} of $(X,d)$, $C\dim_A X$, is the infimum of $\dim_A Y$ over all metric spaces $(Y,d')$ QS equivalent to $(X,d)$. By an observation of Heinonen \cite[Theorem 14.16]{hei:lams}, for metric spaces $(X,d)$ which are both doubling and uniformly perfect,\footnote{$(X,d)$ is {\it uniformly perfect} if there exists $c>0$ so that for all $x \in X$ and all $0<r<\diam X$, $B(x,r) \setminus B(x,cr) \ne \emptyset$.} $C\dim_A X$ coincides with the {\it Ahlfors regular conformal dimension} $ARC\dim X$ equal to the infimum of all $Q$ for which there exists an Ahlfors $Q$-regular space $(Y,d')$ QS equivalent to $(X,d)$. Recall that a measure $\nu$ on $(X,d)$ is {\it Ahlfors $Q$-regular} if there exist $C_1>0$ and $C_2<\infty$ so that $C_1 r^Q \le \nu(B(x,r)) \le C_2 r^Q$ for all $x \in X$ and $0<r<\diam X$. A metric space $(X,d)$ is Ahlfors $Q$-regular if it supports an Ahlfors $Q$-regular measure, and every Ahlfors regular space is both doubling and uniformly perfect.

For sets $E \subset \R^n$, we may consider instead the {\it global quasiconformal dimension} $\GQC_{\R^n}\dim E := \inf \{ \dim f(E) : \mbox{$f:\R^n \to \R^n$ QC} \}$. Here $\dim \in \{\dim_H,\dim_B,\dim_A\}$.

\begin{remark}
For any $(X,d)$, the {\it snowflake map} $(X,d) \to (X,d^\alpha)$, $0<\alpha<1$, is QS and $\dim(X,d^\alpha) = \alpha^{-1} \dim(X,d)$. (Here $\dim$ may refer to any of the above notions.) Thus if $\dim (X,d)$ is positive, then the dimension can be increased arbitrarily by a QS map. With regards to global quasiconformal mappings, Bishop \cite{bis:increase} showed that if $E \subset \R^n$ is closed with $\dim_H E > 0$, then $\sup \{\dim_H f(E) : \mbox{$f$ QS map of $\R^n$} \} = n$. To the best of our knowledge the following questions remain open.

\begin{question}
Let $\dim \in \{\dim_B,\dim_A\}$ and let $E \subset \R^n$ be a closed set with $\dim E > 0$. Is it true that $\sup \{ \dim f(E) : \mbox{$f$ QS map of $\R^n$} \} = n$?
\end{question}
\end{remark}

A set $E$ is said to be {\it minimal for conformal dimension} if $C\dim E = \dim E$. The following example comes from Bishop--Tyson \cite[Remark 1]{bt:locally-minimal}.

\begin{example}\label{ex:bar-code}
For any Borel set $Z \subset \R^{n-1}$, the set $E = Z \times [0,1]$ is minimal for conformal Hausdorff dimension.
\end{example}

By way of contrast, note that $E = Z \times F_p$ is minimal for conformal Assouad dimension, for any $p>0$ and any Ahlfors regular $Z \subset \R^{n-1}$. In fact, every QS map between doubling metric spaces descends to a QS map between weak tangents. Since $E$ has weak tangents of the form $Z_\infty \times [0,1]$ and $Z_\infty$ is again Ahlfors regular with $\dim Z_\infty = \dim Z$, we conclude that $C\dim_A E \ge C\dim_A (Z_\infty \times [0,1]) \ge C\dim_H (Z_\infty \times [0,1]) = \dim Z_\infty + 1 = \dim Z + 1 = \dim_A E$.

In \cite{tys:assouad} the author proved that no spaces with Assouad dimension in $(0,1)$ are minimal for conformal Assouad dimension. More precisely, if $(X,d)$ has $\dim_A X < 1$, then $C\dim_A X = 0$. The corresponding question for Hausdorff or box-counting dimension was resolved by Kovalev \cite{kov:duke}.

\begin{theorem}[Kovalev]\label{th:kovalev-theorem}
Let $\dim \in \{ \dim_H, \dim_B \}$ and let $(X,d)$ be a metric space with $\dim X < 1$. Then $C\dim X = 0$.
\end{theorem}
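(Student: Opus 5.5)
The plan is to build the small-dimensional QS image directly: deform the metric of $X$ by collapsing scales across empty annuli, which the hypothesis $\dim X<1$ forces to exist, and then check that the deformed space has small dimension and is QS equivalent to $X$. I first reduce to the case $\dim_B X<1$ with $X$ bounded. For Hausdorff dimension, I would use countable stability to decompose $X$ into countably many pieces of small box dimension, and argue with subsets where the Hausdorff content is controlled. I expect this reduction to need care, and I come back to it below.

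The basic geometric input is a "radial projection" observation. For fixed $x\in X$, the map $z\mapsto d(x,z)$ is $1$-Lipschitz from $X$ to $[0,\infty)$. Hence its image has dimension at most $\dim X=s<1$, and in particular it has Lebesgue measure zero. Quantitatively, the covering bound $N(X,r)\lesssim r^{-t}$ with $s<t<1$ controls how much of $[\rho,2\rho]$ the set $\{d(x,z):z\in X\}$ can occupy. So for every $x$ and most scales $\rho$ there is an empty annulus $B(x,\rho')\setminus B(x,\lambda\rho')$ with $\rho'\sim\rho$ and a fixed $\lambda>1$. The proportion of "gapped" scales tends to one as $t\to s$, or as one passes to sparser sequences of scales $\rho=\lambda^{-k}$. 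Consequently, at each such scale $X\cap B(x,\rho)$ splits into clusters that are $\lambda$-separated relative to their size.

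I would then define, for a parameter $\eta\in(0,1)$, a new quasimetric by assigning to each cluster $C$ produced by this multiscale decomposition a weight $w(C)=\prod(\text{factor})$. The factor is $\eta$ for each gapped generation between $C$ and the top level, and $1$ otherwise. I set $d'(x,y):=w(C_{x,y})$, where $C_{x,y}$ is the smallest cluster containing both points. This is an ultrametric-type deformation, in the spirit of the IFS conjugation in Example~\ref{ex:gv}.

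Weak quasisymmetry comes from two facts. The clusters are nested and separated. Also, $d(x,y)\le d(x,z)$ forces $C_{x,y}$ to lie inside a cluster of generation no earlier than roughly that of $C_{x,z}$, up to a bounded number of steps. So $d'(x,y)\le H\,d'(x,z)$ with $H=H(\lambda,\eta)$. Since $X$ is doubling, because box dimension is finite (shrinking to subsets if needed), this upgrades to $\eta$-QS by the result cited from \cite[Theorem 10.19]{hei:lams}, after checking that $d'$ is comparable to a genuine metric.

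For the dimension count, each gapped generation multiplies diameters by $\eta$, while the number of children stays bounded by $N(B,\rho)\lesssim\lambda^{t}$. So $\dim_B(X,d')\lesssim t\log\lambda/\log(1/\eta)$ along the gapped scales, and this tends to $0$ as $\eta\to0$.

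The main obstacle is uniformity. A naive counting argument for the empty annuli gives gaps at \emph{most} scales but not at \emph{all} scales and locations. The ungapped scales carry factor $1$ and could dominate the covering count, or they could break the bounded-step comparison used for weak quasisymmetry. I would first try to select a lacunary sequence of scales $\lambda^{-kM}$ with $M$ large. By pigeonholing the measure-zero image of $z\mapsto d(x,z)$ inside each block of $M$ consecutive scales, I would guarantee at least one gap per block uniformly in $x$. This requires the covering bound to hold at every location, which is exactly what box dimension lacks in general. If it fails, I would instead work with a Frostman-type or content-based localization to pieces where $N(B(x,R),r)\lesssim (R/r)^t$ holds for scales that are uniform on the piece, and then glue the pieces, as Kovalev does.
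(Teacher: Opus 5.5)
The paper gives no proof of this theorem. It cites \cite{kov:duke} and remarks only that Kovalev constructs quasisymmetric self-maps of a Banach space $V$ containing $X$ isometrically. Your proposal has a genuine gap at its central step. Your observation that $z\mapsto d(x,z)$ is $1$-Lipschitz, so that distance sets are small, is sound. However, it only produces \emph{thin} gaps. The assertion that for every $x$ and most scales $\rho$ the annulus $\{\rho'<d(x,\cdot)<\lambda\rho'\}$ misses $X$, for some $\rho'\sim\rho$ and a fixed $\lambda>1$, does not follow from $\dim X<1$. With only the global bound $N(X,r)\lesssim r^{-t}$, your own count gives a gap in $\{d(x,z):z\in X\}\cap[\rho,2\rho]$ of relative width about $\rho^{t/(1-t)}$, which degenerates as $\rho\to0$. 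Fixed-ratio gaps at every location are what a scale-invariant bound $N(B(x,R),r)\lesssim (R/r)^t$ provides, i.e.\ $\dim_A X<1$. For a concrete failure, take $F_p$ from Example~\ref{ex:f-p}: $\dim_H F_p=0$ and $\dim_B F_p=\frac1{1+p}<1$. The distance set from $0$ is $\{m^{-p}\}$, with consecutive ratios $(\frac{m+1}{m})^p\to 1$, so every annulus of ratio $\lambda$ about $0$ at small scale meets $F_p$. A null (here even countable) distance set therefore gives no fixed-ratio gap in any block of $M$ scales, and your pigeonhole repair fails as well.

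More decisively, no variant of the cluster construction can work. Because your clusters are nested, $d'(x,y)=w(C_{x,y})$ is an ultrametric. Uniform disconnectedness is a quasisymmetric invariant, and every ultrametric space has it: a chain $x_0,\ldots,x_N$ with all steps at most $\delta\, d'(x_0,x_N)$, $\delta<1$, is impossible there. But $F_p$ is not uniformly disconnected. The chain $m^{-p},(m+1)^{-p},\ldots,(2m)^{-p}$ has steps at most $pm^{-p-1}$ and joins points at distance $(1-2^{-p})m^{-p}$, and the ratio tends to $0$. So no quasisymmetric map of $F_p$ can produce your $d'$.

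What you have sketched is essentially an argument for the Assouad case $\dim_A X<1$ (the result of \cite{tys:assouad} quoted just before the statement). There uniform disconnectedness yields an ultrametric model, and $d\mapsto d^q$ with $q$ large reduces the dimension. Kovalev's theorem goes beyond uniformly disconnected spaces, so it needs non-ultrametric deformations. For example, the power map $x\mapsto |x|^{\alpha-1}x$ sends $F_p$ onto $F_{p\alpha}$, and $\dim_B F_{p\alpha}=\frac1{1+p\alpha}\to 0$.

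Several auxiliary steps are also unsound:
\begin{enumerate}
\item $\dim_H X<1$ does not make $X$ a countable union of sets of box dimension $<1$; decompositions into box-dimension pieces compute packing dimension, not Hausdorff dimension.
\item Quasisymmetric maps on separate pieces do not glue to one on $X$.
\item Finite box dimension does not imply doubling.
\item The weak-to-full upgrade \cite[Theorem 10.19]{hei:lams} is stated for connected spaces, whereas your $X$ is totally disconnected.
\end{enumerate}
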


In fact, Kovalev proves a stronger conclusion: whenever $X$ is isometrically embedded into some Banach space $V$, then one can choose QS self-maps of $V$ which reduce the dimension of $X$ arbitrarily. In particular, if $E \subset \R^n$ has $\dim E < 1$, then $\GQC\dim_{\R^n} E = 0$.

We highlight some known results and open questions on conformal dimensions of self-similar Euclidean sets. The {\it Sierpi\'nski gasket} $\SG$ (Figure \ref{fig:gaskets}(a)) is the invariant set for a planar IFS consisting of three contractions each with scale factor $\tfrac12$.   The dimension of $\SG$ is $\tfrac{\log 3}{\log 2}$. In \cite{tw:gasket}, the author and J.-M. Wu showed that $\GQC\dim_{\R^2} \SG = 1$. Moreover, the infimum is not achieved\footnote{Any QC map of $\R^2$, or more generally any QS map from $\SG$ to any other metric space, has image set with dimension strictly greater than one.} and can be reached via invariant sets of self-similar IFS. See Figure \ref{fig:gaskets}(b) for an example of one such deformed IFS.\footnote{For an animation of a one-parameter deformation of $\SG$ through the moduli space of invariant sets of self-similar IFS, see {\tt https://publish.illinois.edu/jeremy-tyson/animations}.} The construction generalizes to a more extensive class of post-critically finite iterated function systems satisfying suitable geometric constraints on the location of, and relationship between, the self-similar pieces. We refer to \cite{tw:gasket} for more details.

\begin{figure}
\includegraphics[scale=.36]{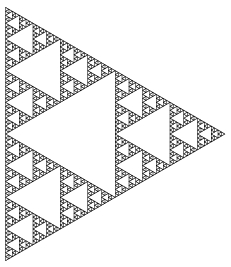} \hspace{20pt} \includegraphics[scale=.45]{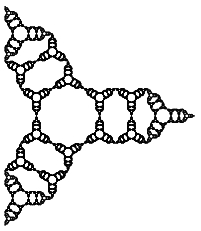}
%
%
\caption{(a) The Sierpi\'nski gasket $\SG$; (b) a QS deformation of $\SG$, given as the invariant set for a new self-similar IFS.}
\label{fig:gaskets}
\end{figure}

By way of contrast, determining the conformal dimension of the {\it Sierpi\'nski carpet} $\SC$ is a longstanding open problem. The carpet $\SC$ is the invariant set for a planar IFS consisting of eight contractions each with scale factor $\tfrac13$, and hence $\dim \SC = \tfrac{\log 8}{\log 3} \approx 1.89279\ldots$. Unlike $\SG$, the defining IFS is not post-critically finite. Known facts about $C\dim \SC$ include:
\begin{itemize}
\item $C\dim_A \SC < \dim \SC$. This fact follows from a general result of Keith and Laakso \cite{kl:conf-a-dim} characterizing which Ahlfors regular spaces are minimal for conformal Assouad dimension.
\item $C\dim_H \SC = C\dim_A \SC$. This is a consequence of Theorem \ref{th:eb} below. We denote this value by $C\dim \SC$.
\item $C\dim \SC \ge 1 + \tfrac{\log 2}{\log 3} \approx 1.63093\ldots > 1$. This follows from the inclusion $\SC \supset C \times [0,1]$, where $C$ denotes the standard Cantor set, together with Example \ref{ex:bar-code}.
\item $1.7652\ldots \le C\dim \SC \le 1.8067\ldots$. These numeric bounds were obtained in a computer-assisted calculation by Kwapisz \cite{kw:carpet}, using estimates for $p$-resistance of approximating networks. Kwapisz's upper bound improves an earlier upper estimate by Kigami. The paper \cite{kw:carpet} also suggests a conjectural construction of the optimal (uniformizing) metric on $\SC$.
\end{itemize}
To the best of the author's knowlege, it is not known whether or not $\GQC\dim_{\R^2} \SC$ is equal to $\dim \SC$. In other words, does there exist any QC map $f:\R^2 \to \R^2$ so that $\dim f(\SC) < \dim \SC$? 

\begin{example}\label{ex:bedford-mcmullen}
Bedford--McMullen carpets are a canonical class of self-affine but non-self-similar fractal sets which serve as a model for general self-affine geometry. The Hausdorff and box-counting dimensions of such carpets were computed independently by Bedford and McMullen in the 1980s.  Here we only want to mention an interesting dichotomy proved by Mackay \cite{mac:carpets}: every such carpet $E \subset \R^2$ is either minimal for conformal Assouad dimension or has conformal Assouad dimension zero. Mackay also gives an explicit formula for $\dim_A E$ for these carpets. The conformal Hausdorff dimensions of Bedford--McMullen carpets are still not known.

We refer the interested reader to Fraser's survey article \cite{fr:bedford-mcmullen-survey} for a broad overview of Bedford-McMullen carpets and tools used to compute or estimate dimensions.
\end{example}

We conclude this section with a recent result of Eriksson-Bique \cite{seb:conformal-dimension}. We refer to \cite{seb:conformal-dimension} for the definition of {\it quasi-self-similar metric space}, but note that it includes all invariant sets for self-similar IFS satisfying the open set condition.

\begin{theorem}[Eriksson-Bique]\label{th:eb}
$C\!\dim_A X = C\!\dim_H X$ for quasi-self-similar $(X,d)$.
\end{theorem}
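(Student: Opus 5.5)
The inequality $C\dim_H X \le C\dim_A X$ is immediate. For every space $Y$ QS equivalent to $X$ we have $\dim_H Y \le \dim_A Y$, and taking infima gives the claim. All of the work is therefore in the reverse inequality $C\dim_A X \le C\dim_H X$. Concretely, given a QS image $Y$ of $X$ with $\dim_H Y < Q$, we must produce another QS image $Z$ of $X$ with $\dim_A Z \le Q+\eps$. Quasi-self-similarity will be used to turn the global Hausdorff-type information into uniform, scale-invariant information.

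The plan is to pass through a combinatorial description of conformal dimension. First, fix a hierarchy of covers of $X$: at level $m$, a bounded-overlap family of balls $\mathcal{U}_m$ of radius comparable to $\lambda^m$, organized as a tree. Quasi-self-similarity means every ball, rescaled, looks uniformly bi-Lipschitz to a ball of bounded scale. This lets us identify the subtree below any vertex with a bounded perturbation of an initial subtree.

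Second, I will use the standard characterization of $C\dim_A$ for doubling, uniformly perfect spaces, which by Heinonen's observation equals $ARC\dim$. It can be computed via discrete $p$-modulus of ``annular'' curve families: $C\dim_A X$ is the infimum of $p$ for which the discrete $p$-modulus of curves crossing an annulus $B(x,2r)\setminus B(x,r)$, measured at depth $k$ below scale $r$, tends to $0$ as $k\to\infty$ \emph{uniformly} in $x$ and $r$. This is the Keith--Kleiner / Carrasco Piaggio criterion.

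Third, from a QS image $Y$ with $\cH^Q(Y)$ small relative to coverings, I will build discrete admissible weights for these curve families. If $\{A_i\}$ covers $Y$ with $\sum (\diam A_i)^Q$ small, the weights $\rho(U)=\diam f(U)/\diam f(\text{annulus})$ on cover elements are admissible up to constants by quasisymmetry. Their $Q$-mass is controlled by the Hausdorff sum, which shows that the modulus is small for \emph{some} annuli and depths.

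The main obstacle is upgrading this from ``some'' to ``all locations and scales uniformly.'' A Hausdorff-small cover only gives smallness on average or at particular places, whereas the criterion needs it uniformly. Here quasi-self-similarity is essential. Because every annulus is, after rescaling, uniformly bi-Lipschitz to an annulus at a fixed macroscopic scale, a single good weight obtained at one location can be transported to every location and scale. Submultiplicativity of the discrete modulus under refinement (concatenating weights across consecutive generations) then upgrades ``modulus $<1/2$ at some depth $k_0$'' into ``modulus $\le 2^{-j}$ at depth $jk_0$'' uniformly. The delicate step is ensuring that the Hausdorff-cover weight is below threshold for a full annulus rather than just for a positive measure set of curves. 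I would try first to use a Frostman/capacity argument: if modulus stayed bounded below at every location, one could build a measure on $Y$ violating $\dim_H Y<Q$. Finally, feeding uniform modulus decay into Carrasco Piaggio's construction yields an Ahlfors $(Q+\eps)$-regular QS image $Z$, so $C\dim_A X \le Q+\eps$, which completes the proof.
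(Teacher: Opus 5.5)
There is a genuine gap, located at the point where the Hausdorff hypothesis enters (your third step). The easy inequality is fine. So is the outer framework: Carrasco Piaggio's critical-exponent characterization of $ARC\dim X=C\dim_A X$, submultiplicativity, and his construction of an Ahlfors regular metric at the end. (The survey gives no proof of this theorem; it only cites \cite{seb:conformal-dimension}, so the comparison below is with Eriksson-Bique's argument.)

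The modulus in your second step puts weights on the cells of a single level $\mathcal{U}_{m+k}$, and neither reading of your third step produces such a weight.
\begin{itemize}
\item If you put $\rho(U)=\diam f(U)/\diam f(A)$ on those cells, then $\rho$ does not involve the cover $\{A_i\}$ at all. Its $Q$-mass is a sum over one particular cover $\{f(U)\}$ of $f(A)$. Nothing bounds it by the Hausdorff content of $Y$, which is an infimum over all covers. Single-level sums of this kind are box/Assouad-type quantities, which is exactly why this modulus detects $C\dim_A$.
\item If you put $\diam A_i/\diam f(A)$ on the pulled-back sets $f^{-1}(A_i)$, then admissibility and the mass bound both hold. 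But these sets have arbitrary, unrelated sizes and are not cells of any level, so this $\rho$ is not a competitor for the modulus you invoked.
\end{itemize}
Either way you never reach ``modulus $<1/2$ at some depth $k_0$'', so the submultiplicativity bootstrap has nothing to start from.

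The obstacle you single out is also not the real one. A single cover of $Y$ with $\sum_i(\diam A_i)^Q<\eps$ already gives small multi-scale weights for \emph{all} macroscopic annuli at once, after normalizing by $\diam f(A)$, which is bounded below for macroscopic $A$. Quasi-self-similarity is needed only to pass from macroscopic annuli to small ones. In Eriksson-Bique's definition the rescaling maps are quasisymmetric rather than bi-Lipschitz, and they go from small balls into macroscopic pieces. So one transports good weights from every macroscopic location, not from a single one.

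The real difficulty is multi-scale versus single-scale covers, and this is precisely where quasi-self-similarity has to be used. Take $X=[0,1]\times F_p$, discussed after Example \ref{ex:bar-code}. Since $\cH^Q(X)=0$ for every $Q>1$, multi-scale weights of arbitrarily small $Q$-mass exist for every annulus. Yet $C\dim_H X=1<2=C\dim_A X$, so the single-level modulus does not decay uniformly for $1<Q<2$. Concretely, at a fixed depth the level-$k$ approximations of small annuli centred on $[0,1]\times\{0\}$ look like those of a planar half-annulus.

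What is missing is a comparison lemma:
\begin{itemize}
\item define a discrete modulus relative to covers by balls of varying radii, i.e.\ a Hausdorff-content version of the Keith--Kleiner/Carrasco Piaggio modulus;
\item observe, as in your third step, that its critical exponent is at most $C\dim_H X$;
\item prove, using quasi-self-similarity, that smallness of this multi-scale modulus on macroscopic annuli forces uniform decay of the single-level modulus.
\end{itemize}
One way to do the last step is to repeatedly substitute, into each ball of a good finite cover, a rescaled copy of a good cover, until all pieces reach a common scale. This requires controlling scale ratios, overlaps, and the quasisymmetric distortion of annuli.

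Supplying this comparison is essentially the content of Eriksson-Bique's argument. Your Frostman/capacity remark does not provide it: turning lower bounds for the single-level modulus into lower bounds for $\dim_H Y$ is the same comparison in dual form.
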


Theorem \ref{th:eb} indicates that results about other variants of conformal dimension (e.g., conformal box-counting dimension or the conformal versions of the dimension interpolants studied in the following section) only provide nontrivial information in case the underlying space is not self-similar.

\subsection{References}

The literature on conformal dimension is vast, and in the interest of brevity we omit discussion of many other interesting topics and applications. The monograph \cite{mt:conformal-dimension} contains an extensive discussion of conformal dimension in metric spaces, along with applications to geometric group theory. For more information about the role of conformal dimension in connection with Cannon's conjecture and dynamics on fractal spheres, we recommend the ICM lecture by Bonk in \cite{bonk:icm}.
Kwapisz's estimates for the conformal dimension of $SC$ in \cite{kw:carpet} rely on characterizations of conformal dimension as a critical exponent for certain discrete moduli on graph approximations. This idea has been explored by many authors in recent years; for a partial list of references we mention \cite{bp:combinatorial}, \cite{hp:cxc}, \cite{cp:gauge}, and \cite{mur:combinatorial}.

\section{Sobolev and QC distortion of dimension interpolants}\label{sec:interpolation}

In this final section, we describe recent work of the author, in collaboration with Chrontsios Garitsis \cite{ct:qc-assouad-spectrum} and separately with Fraser \cite{ft:Sobolev-intermediate-dimension}, on the quasiconformal and Sobolev distortion properties of dimension interpolants.

{\it Dimension interpolation} refers to a research agenda which identifies geometrically natural one-parameter families of dimensions interpolating between existing notions of metric dimension. Two dimension interpolants feature prominently in this section. The {\it Assouad spectrum}, introduced by Fraser and Yu in \cite{fy:assouad-spectrum}, interpolates between box-counting and Assouad dimension, while the {\it intermediate dimensions}, introduced by Falconer, Fraser, and Kempton in \cite{ffk:intermediate-dimensions}, interpolate between Hausdorff and box-counting dimensions. In Definitions \ref{def:assouad-spectrum} and \ref{def:intermediate-dimensions}, we assume that $E \subset \R^n$ is a bounded set and we fix $0<\theta<1$. Recall that $N(F,r)$, for a bounded set $F$, denotes the minimal number of sets of diameter at most $r$ needed to cover $F$.

\begin{definition}[Assouad spectrum]\label{def:assouad-spectrum}
The {\it $\theta$-Assouad spectrum} $\dim_A^\theta E$ is the infimum of those values $s>0$ for which there exists a constant $C>0$ so that $N(B(x,R) \cap E,r) \le C (R/r)^s$ for balls $B(x,R)$ in $\R^n$ with $x \in E$ and $0<r\le R^{1/\theta} < R < 1$.
\end{definition}

\begin{definition}[Intermediate dimension]\label{def:intermediate-dimensions}
The {\it (upper) $\theta$-intermediate dimension} $\dim_\theta E$ is the infimum of $s>0$ such that for some $C>0$ and all $\delta\in(0,\delta_0(\eps))$, $E$ can be covered by sets $\{A_i\}$ with $\delta^{1/\theta} \le \diam A_i \le \delta$ for all $i$ and $\sum_i (\diam A_i)^s  \le C$.
\end{definition}

Definition \ref{def:assouad-spectrum} differs from the concept defined in \cite{fy:assouad-spectrum}, which considers the covering number $N(B(x,R) \cap E,r)$ for pairs $r<R$ satisfying $r = R^{1/\theta}$ rather than $r \le R^{1/\theta}$. The two notions are in general distinct, but are related: see \cite[Theorem 3.3.6]{fr:book} for a formula relating the resulting two versions of Assouad spectrum. The quantity in Definition \ref{def:assouad-spectrum} was termed {\it upper $\theta$-Assouad spectrum} in \cite[Section 3.3.2]{fr:book} and {\it regularized $\theta$-Assouad spectrum} in \cite{ct:qc-assouad-spectrum}. Definition \ref{def:assouad-spectrum} is more suited to the study of mapping properties, since in general we only obtain an estimate for the diameter of the image of a set under a given map rather than an exact formula.

\begin{example}\label{ex:f-p-2}
For the sets $F_p$ in Example \ref{ex:f-p}, the formulas $\dim_A^\theta F_p = \frac1{(1+p)(1-\theta)} \wedge 1$ and $\dim_\theta F_p = \frac{\theta}{\theta+p}$ were obtained in \cite[Corollary 6.4]{fy:assouad-spectrum} and \cite[\S 3.1]{ffk:intermediate-dimensions}, respectively.
\end{example}

\begin{example}
Higher dimensional analogs for the sets $F_p$ were considered in \cite{bf:ifs} and \cite{ft:Sobolev-intermediate-dimension}. Given $p>0$ and $n \in \N$, set $(\N^p)^n := \{ (m_1^p,\ldots,m_n^p):m_j \in \N \}$. For $\alpha \ne 0$ denote by $f_\alpha:\R^n \setminus\{0\} \to \R^n \setminus \{0\}$ the radial stretch map $f_\alpha(x) = |x|^{\alpha-1} x$. 
The map $f_{-1}(x) = \tfrac{x}{|x|^2}$ is the conformal inversion in $\Sph^{n-1} = \{ x : |x|=1\}$, while in general $f_\alpha$ is $K$-quasiconformal with $K = \max\{|\alpha|,|\alpha|^{-1}\}$. Set $E_{p,n}^\alpha := \overline{f_\alpha((\N^p)^n)}$ for $-1\le\alpha<0$; see Figure \ref{fig:e-p-n-alpha-s} for illustrations. 

\begin{figure}[h]
\includegraphics[scale=.35]{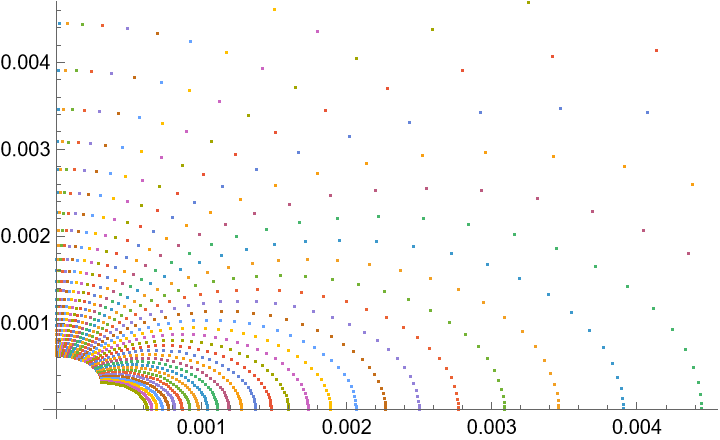} \hspace{40pt} \includegraphics[scale=.35]{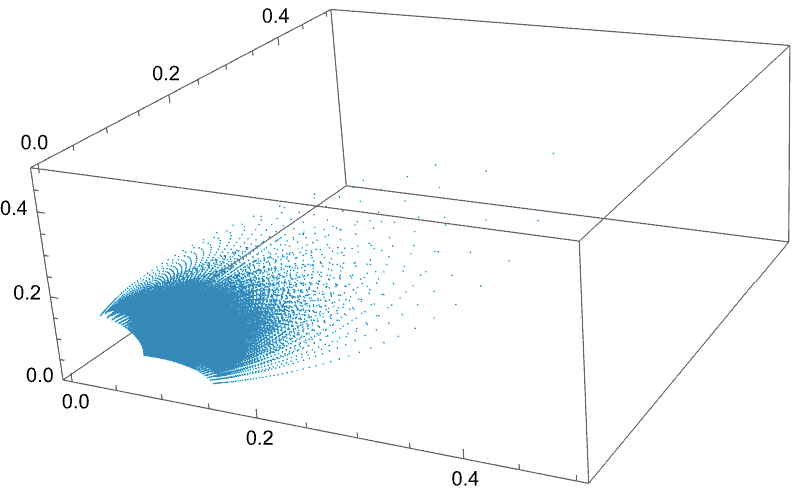}
\caption{Examples of multi-dimensional polynomial sequence sets $E_{p,2}^\alpha$ (left) and $E_{p,3}^\alpha$ (right)}
\label{fig:e-p-n-alpha-s}
\end{figure}

The following expression for the intermediate dimensions was obtained in \cite{bf:ifs} ($\alpha=-1$) and in \cite{ft:Sobolev-intermediate-dimension} (general $\alpha$):
\begin{equation}\label{eq:e-p-n-alpha-intermediate-dimensions}
\dim_\theta E_{p,n}^\alpha = \frac{n \theta}{\theta + p |\alpha|}, \qquad 0<\theta<1, -1\le \alpha<0.
\end{equation}
Hence $\dim_B E_{p,n}^\alpha = \tfrac{n}{1+p|\alpha|}$. Moreover, $\dim_A E_{p,n}^\alpha = n$ for all $p$ and $\alpha$. 
\end{example}

\begin{example}\label{ex:spirals}
Polynomial spirals were studied by Fraser \cite{fr:spirals} in connection with the bi-H\"older winding problem. Given $p>0$, let $S_p := \{ x^{-p} e^{\bi x} : 1<x<\infty \} \cup \{0\} \subset \C$. Theorem 4.4 of \cite{fr:spirals} provides the following formula for the Assouad spectrum of $S_p$: $\dim_A^\theta S_p = \tfrac2{(1+p)(1-\theta)} \wedge 2$ if $0<p<1$, and $\dim_A^\theta S_p = \left( 1+ \tfrac{\theta}{p(1-\theta)} \right) \wedge 2$ if $p \ge 1$. In particular, $\dim_B S_p = \tfrac2{1+p}$ if $0<p<1$ and $\dim_B S_p = 1$ if $p\ge 1$. Clearly $\dim_H S_p = 1$, and another analysis using weak tangents reveals that $\dim_A S_p = 2$. 
\end{example}

\begin{example}\label{ex:bedford-mcmullen-dimension-interpolants}
The exact values of dimension interpolants for Bedford--McMullen carpets have been computed by Fraser and Yu \cite[Theorem 3.3]{fy:fractal-families} (Assouad spectrum) and Banaji and Kolossv\'ary \cite{bk:carpets} (intermediate dimensions).
\end{example}

Propositions \ref{prop:assouad-spectrum-properties} and \ref{prop:intermediate-dimension-properties} collect known facts about these dimension interpolant functions. Of particular importance are inequalities \eqref{eq:Assouad-spectrum-upper-bound} and \eqref{eq:intermediate-dimension-lower-bound} which relate different notions of dimension. For proofs, we refer to \cite{fy:assouad-spectrum}, \cite{ffk:intermediate-dimensions}, and \cite{br:intermediate-dimensions}, see also \cite[Section 3.3]{fr:book} for these and other facts about Assouad spectrum.

\begin{proposition}\label{prop:assouad-spectrum-properties}
Let $E \subset \R^n$ be a bounded set. Then
\begin{enumerate}
\item the function $\theta \mapsto \dim_A^\theta E$ is non-decreasing and continuous in $\theta$ for $0<\theta<1$,
\item $\dim_A^\theta E \searrow \dim_B E$ as $\theta \searrow 0$,
\item if $f:E \to \R^N$ is injective and $\alpha$-H\"older with $\beta^{-1}$-H\"older inverse for some $0<\alpha \le 1 \le \beta < \infty$, then\footnote{provided all of the relevant Assouad spectrum parameters lie in $(0,1)$} $\tfrac{1-\beta\theta/\alpha}{\beta(1-\theta)} \dim_A^{\beta\theta/\alpha} E \le \dim_A^\theta f(E) \le \frac{1-\alpha\theta/\beta}{\alpha(1-\theta)} \dim_A^{\alpha\theta/\beta} E$,
\item for each $0<\theta<1$,
\begin{equation}\label{eq:Assouad-spectrum-upper-bound}
\dim_A^\theta E \le \frac{\dim_B X}{1-\theta} \wedge \dim_A E.
\end{equation}
\end{enumerate}
\end{proposition}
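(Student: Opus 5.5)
Each of the four items reduces to a covering-number estimate applied directly to Definition \ref{def:assouad-spectrum}, and I would prove them in the order (4), (3), (1), (2). Throughout I write $N_E(x,R,r) := N(B(x,R)\cap E,r)$ and take $x\in E$, $0<r\le R^{1/\theta}<R<1$.

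\emph{Item (4).} There are two bounds. First, monotonicity of covering numbers gives $N_E(x,R,r)\le C(R/r)^{\dim_A E+\eps}$ directly. Second, $N_E(x,R,r)\le N(E,r)\le C r^{-(\dim_B E+\eps)}$. The constraint $r\le R^{1/\theta}$ means $R\ge r^\theta$, so $R/r\ge r^{\theta-1}$, that is, $r^{-1}\le (R/r)^{1/(1-\theta)}$. Hence $N_E(x,R,r)\le C(R/r)^{(\dim_B E+\eps)/(1-\theta)}$. Letting $\eps\to 0$ gives \eqref{eq:Assouad-spectrum-upper-bound}.

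\emph{Item (3), upper bound.} Fix $y=f(x)$ and a pair $r\le R^{1/\theta}$ in the image. By the two H\"older conditions, $f^{-1}(B(y,R)\cap f(E))\subset B(x,cR^{1/\beta})$, and any set of diameter $\lesssim r^{1/\alpha}$ in $E$ maps to a set of diameter $\lesssim r$. So set $R' := R^{1/\beta}$ and $r' := r^{1/\alpha}$. Then $r'\le R^{1/(\alpha\theta)} = R'^{\beta/(\alpha\theta)}$, so the pair $(R',r')$ is admissible for the parameter $\theta' := \alpha\theta/\beta$. This is exactly where the "regularized" choice $r\le R^{1/\theta}$ in the definition is needed. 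We obtain
\[
N_{f(E)}(y,R,r)\lesssim (R'/r')^{s} = \bigl(R^{1/\beta}r^{-1/\alpha}\bigr)^{s}, \qquad s>\dim_A^{\theta'}E.
\]
It remains to bound $R^{1/\beta}r^{-1/\alpha}$ by $(R/r)^{\kappa}$ with $\kappa = \frac{1-\alpha\theta/\beta}{\alpha(1-\theta)}$. The ratio $\log(R^{1/\beta}r^{-1/\alpha})/\log(R/r)$ is a linear-fractional function of $u=\log R/\log r\in(0,\theta]$. Because it is monotone in $u$, its maximum is attained at the endpoint $u=\theta$, where it equals $\kappa$. The lower bound follows by applying the same argument to $f^{-1}$, which is $\beta^{-1}$-H\"older with $\alpha$-H\"older inverse. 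Fixing the constants $c$ and the covering adjustments near $R\approx 1$ is routine.

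\emph{Items (1) and (2).} Monotonicity in (1) is immediate, since enlarging $\theta$ enlarges the set of admissible pairs. For continuity and for (2), I would compare $\theta<\theta'$ as follows. Given an admissible pair for $\theta'$ with $r\le R^{1/\theta'}$ but $r>R^{1/\theta}$, cover $B(x,R)$ by balls of radius $\rho$ chosen so that $r=\rho^{1/\theta}$. The number of such balls is at most $N(B(x,R),\rho)$, which is controlled by the spectrum at smaller parameters or by doubling. Then count each $r$-cover inside a $\rho$-ball using the $\theta$ bound. This gives an estimate of the form $\dim_A^{\theta'}E\le \dim_A^{\theta}E + O(\theta'-\theta)\cdot(\text{bounded})$ on compact subintervals of $(0,1)$.

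For (2), the inequality $\dim_B\le\dim_A^\theta$ holds by taking $R$ close to $1$. For the limit, apply (4) in localized form: $\dim_A^\theta E\le \dim_B E/(1-\theta)\to\dim_B E$ as $\theta\to 0$. Combined with monotonicity, this yields $\dim_A^\theta E\searrow\dim_B E$.

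I expect the main obstacle to be the continuity part of (1), specifically when $\dim_A E=\infty$ is not excluded. The multiplicative combination of coverings across scales has to be arranged so that the loss tends to zero as $\theta'\to\theta$. My first attempt would be the two-step covering above, using the bound $n$ (available since $E\subset\R^n$) to control the intermediate covering counts; the algebra would follow \cite[Section 3.3]{fr:book}.
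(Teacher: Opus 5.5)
Your proposal is correct, and for (2)–(4) it uses the standard arguments from the sources the paper cites instead of giving a proof (\cite{fy:assouad-spectrum}, \cite[Section 3.3]{fr:book}): the global covering bound for (4), pulling balls and scales back to the admissible parameter $\alpha\theta/\beta$ and taking the endpoint $u=\theta$ of the increasing linear-fractional exponent for (3), and the lower bound plus (4) and monotonicity for (2). In (1) the step you flagged as the main obstacle is easier than you expect: $r>R^{1/\theta}$ forces $\rho=r^\theta>R$, so your two-step covering reduces to the single ball $B(x,\rho)$ and gives $\dim_A^{\theta'}E\le\frac{1-\theta}{1-\theta'}\dim_A^{\theta}E$ for $\theta<\theta'$ directly, and this together with monotonicity and $\dim_A^\theta E\le n$ gives locally Lipschitz continuity with no doubling step needed.
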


\begin{proposition}\label{prop:intermediate-dimension-properties}
Let $E \subset \R^n$ be a bounded set. Then
\begin{enumerate}
\item the function $\theta \mapsto \dim_\theta E$ is non-decreasing and continuous in $\theta$ for $0<\theta<1$,
\item $\dim_A^\theta E \nearrow \dim_B E$ as $\theta \nearrow 1$,
\item $\dim_\theta f(E) \le \dim_\theta E$ for $f:E \to \R^N$ Lipschitz and $0<\theta<1$, ,
\item $\dim_\theta f(E) \le \tfrac1\alpha \dim_\theta E$ for $f:E \to \R^N$ $\alpha$-H\"older, $0<\alpha<1$, and $0<\theta<1$,
\item for each $0<\theta<1$,
\begin{equation}\label{eq:intermediate-dimension-lower-bound}
\dim_\theta E \ge \frac{\theta(\dim_A E)(\dim_B E)}{\dim_A E - (1-\theta) \dim_B E}.
\end{equation}
\end{enumerate}
\end{proposition}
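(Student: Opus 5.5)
The plan is to prove the five items in the order (3)/(4), (1), (5), (2): the mapping statements are immediate from the definition, continuity uses the same "modify a cover" trick, and item (2) (read with $\dim_\theta$ in place of the evident typo $\dim_A^\theta$) follows from (5).

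\textbf{Items (3) and (4).} Suppose $f$ is $\alpha$-H\"older with constant $C_0$, and take an admissible cover $\{A_i\}$ of $E$ at scale $\delta$, exponent $s$. Then $\{f(A_i\cap E)\}$ covers $f(E)$ and $\diam f(A_i) \le C_0 (\diam A_i)^\alpha$. Since $(\delta^{1/\theta})^\alpha = (\delta^\alpha)^{1/\theta}$, these image sets lie in the window $[\delta'^{1/\theta},\delta']$ with $\delta' = C_0\delta^\alpha$. Image sets that are too small are enlarged to diameter $\delta'^{1/\theta}$ (for instance by adjoining a point); this only changes constants. The $s/\alpha$-sum of the new cover is at most $C_0^{s/\alpha}\sum_i (\diam A_i)^s$, which gives (4). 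Item (3) is the case $\alpha=1$.

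\textbf{Item (1).} Monotonicity is immediate, because for $\theta<\varphi$ the window $[\delta^{1/\theta},\delta]$ is contained in $[\delta^{1/\varphi},\delta]$. For continuity, start from a $\theta$-cover at scale $\delta$ with exponent $s$, and set $\delta' := \delta^{\varphi/\theta}$, so that $\delta'^{1/\varphi}=\delta^{1/\theta}$. Each set with diameter $d\in(\delta',\delta]$ is chopped into $\lesssim (d/\delta')^n$ pieces of diameter $\delta'$. Comparing $(d/\delta')^n\delta'^{t}$ with $d^s$ over this range shows the new cover is admissible for $\varphi$ with exponent $t = s + O(\varphi-\theta)$ (and $t \le n$). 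This gives right-continuity; the same chopping with the roles of the two windows exchanged gives left-continuity.

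\textbf{Item (5).} This is the main obstacle; the plan is a two-scale counting argument. Fix $a>\dim_A E$, let $b<\dim_B E$ (an upper box dimension, so $N(E,\rho)\ge \rho^{-b}$ along a sequence $\rho\to 0$), and take $s>\dim_\theta E$ with a $\theta$-cover $\{A_i\}$ at scale $\delta$ satisfying $\sum_i (\diam A_i)^s\le C$. Put $r=\delta^{1/\theta}$ and introduce an intermediate scale $\rho=\delta^{\gamma}$ with $1\le\gamma\le 1/\theta$. Split the cover into small sets ($\diam A_i\le\rho$) and big sets ($\diam A_i>\rho$).
\begin{itemize}
\item Each small set has diameter at least $r$, so there are at most $Cr^{-s}$ of them, and each lies in a single $\rho$-ball.
\item There are at most $C\rho^{-s}$ big sets, and by the Assouad bound each is covered by $\lesssim (\delta/\rho)^a$ sets of diameter $\rho$.
\end{itemize}
Together these give
\[
\rho^{-b} \le N(E,\rho) \lesssim r^{-s} + \rho^{-s}(\delta/\rho)^a .
\]
Choosing $\gamma$ to balance the two terms and letting $\delta\to0$ along the sequence produces an inequality among $s,a,b,\theta$. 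Solving it for $s$ and letting $a\searrow\dim_A E$, $b\nearrow\dim_B E$, $s\searrow\dim_\theta E$ should yield \eqref{eq:intermediate-dimension-lower-bound}.

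I expect the bookkeeping in this step to be delicate. The naive balancing gives a quadratic rather than the stated linear-fractional relation. If so, I will refine the decomposition to dyadic size classes $\diam A_i\sim 2^{-k}$, bounding each class count both by $2^{ks}$ and by the Assouad-based covering estimate, and optimise over the threshold class. This is the argument in \cite{ffk:intermediate-dimensions}, and I would follow it if my version falls short.

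\textbf{Item (2).} The inequality $\dim_\theta E\le\dim_B E$ is trivial: cover $E$ by $N(E,\delta^{1/\theta})$ sets of diameter $\delta^{1/\theta}$, which gives $\dim_\theta E\le \dim_B E$. For the convergence, assume $\dim_B E>0$, since otherwise there is nothing to prove. As $\theta\nearrow1$, the right side of \eqref{eq:intermediate-dimension-lower-bound} tends to $\dim_A E\cdot\dim_B E/\dim_A E=\dim_B E$. Monotonicity from (1) then gives $\dim_\theta E\nearrow\dim_B E$.
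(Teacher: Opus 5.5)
The paper gives no proof of this proposition; it only refers to the cited literature. So the question is whether your sketch stands on its own. Items (3)--(4) are fine, and the overall plan is sound. Item (5), however, has a genuine gap: as you suspected, the two-scale count as you wrote it cannot give the stated bound.

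The loss is in the big sets. You bound their number by $C\rho^{-s}$ and each covering number by $(\delta/\rho)^a$, which treats every big set as if it had diameter $\delta$. From $\rho^{-b}\lesssim \delta^{-s/\theta}+\rho^{-s}(\delta/\rho)^a$ with $\rho=\delta^\gamma$, optimising over $\gamma$ yields only the positive root of $s^2+(a-b)s-\theta ab=0$. That root is strictly below $\theta ab/(a-(1-\theta)b)$ for every $0<\theta<1$ and $b>0$. For $a=b$ it gives only $\dim_\theta E\ge\sqrt\theta\,\dim_B E$. So you would not even recover the consequence the paper draws from (5), namely that $\dim_B E=\dim_A E$ forces $\dim_\theta E=\dim_B E$.

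The missing step is to keep each big set's own diameter. For $\diam A_i>\rho$, the Assouad bound gives $N(A_i\cap E,\rho)\lesssim(\diam A_i/\rho)^a$. You may assume $s\le a$, since otherwise there is nothing to prove (the right side of (5) is at most $b\le a$). Together with $\diam A_i\le\delta$ this gives $(\diam A_i)^a\le\delta^{a-s}(\diam A_i)^s$. Summing,
\[
\rho^{-b}\le N(E,\rho)\lesssim \delta^{-s/\theta}+\rho^{-a}\delta^{a-s}.
\]
Now take $\rho=\delta^\gamma$ with $\gamma=a/(a-(1-\theta)b)\in[1,1/\theta]$. This $\gamma$ is exactly the one satisfying $\theta\gamma b=\gamma b-(\gamma-1)a$. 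Hence if $s<\theta\gamma b=\theta ab/(a-(1-\theta)b)$, both exponents $s/\theta$ and $s+(\gamma-1)a$ are strictly less than $\gamma b$. This contradicts $N(E,\rho_k)\ge\rho_k^{-b}$ along the sequence realising the upper box dimension.

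The balancing is linear in $\gamma$, so no quadratic appears. Your dyadic-class fallback works only insofar as it sums $n_k(2^{-k}/\rho)^a\le C2^{-k(a-s)}\rho^{-a}$ geometrically, which is this same estimate; no optimisation over threshold classes is needed.

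Three smaller points.

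First, in (1) the containment is backwards. For $\theta<\varphi$ and $\delta<1$ one has $\delta^{1/\theta}<\delta^{1/\varphi}$, so $[\delta^{1/\varphi},\delta]\subset[\delta^{1/\theta},\delta]$. Every $\varphi$-admissible cover is therefore $\theta$-admissible, which is what gives $\dim_\theta E\le\dim_\varphi E$; as written, your containment would give the reverse inequality.

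Second, your chopping estimate $\dim_\varphi E\le\dim_\theta E+(1-\theta/\varphi)(n-\dim_\theta E)$ holds for all $\theta<\varphi$. That single inequality already gives both one-sided limits: fix $\theta$ and let $\varphi\searrow\theta$, or fix $\varphi$ and let $\theta\nearrow\varphi$. No second argument with the windows exchanged is needed.

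Third, (2) need not depend on (5). Run the same chopping with $\varphi=1$, cutting every $A_i$ into pieces of diameter $\delta^{1/\theta}$. This gives $\dim_B E\le n-\theta(n-\dim_\theta E)$, and hence $\dim_\theta E\to\dim_B E$ as $\theta\nearrow1$.
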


In particular, it follows from Proposition \ref{prop:intermediate-dimension-properties}(5) that if $\dim_B E = \dim_A E$ then $\dim_\theta E = \dim_B E$ for all $\theta>0$.

We record the following chain of inequalities valid for any bounded set $E \subset \R^n$:
$$
\dim_H E \le \dim_{\theta'} E \le \dim_B E \le \dim_A^\theta E \le \dim_A E, \qquad \forall \, 0<\theta,\theta'<1.
$$
The limit of $\dim_A^\theta E$ as $\theta \nearrow 1$ exists, but may not equal $\dim_A E$. In fact, $\dim_{qA} E = \lim_{\theta \nearrow 1} \dim_A^\theta E$ is the so-called {\it quasi-Assouad dimension} of $E$ \cite[Section 3.2]{fr:book}. Similarly, $\dim_{qH} E = \lim_{\theta \searrow 0} \dim_\theta E$ is the {\it quasi-Hausdorff dimension} of $E$. 

Parts (3) and (4) of Proposition \ref{prop:intermediate-dimension-properties} assert that intermediate dimension behaves nicely with respect to both Lipschitz and H\"older maps. However, the Assouad spectrum need not satisfy such estimates, cf.\ \cite[Theorem 3.4.12]{fr:book}. Moreover, no general bounds for Sobolev distortion of Assouad spectrum are known; note that if an estimate similar to \eqref{eq:kaufman} held for $\dim_A^\theta$, then passing to the limit as $p \to \infty$ would yield Lipschitz contractivity of $\dim_A^\theta$. However, analogs of the QC distortion estimates \eqref{eq:qc-haus-dim-distortion-b} do hold true. The following theorem comes from \cite{ct:qc-assouad-spectrum}; see Definition \ref{def:rh-exponent} for the definition of $p^\RH(n,K)$.

\begin{theorem}[Chrontsios Garitsis--Tyson]\label{th:ct-qc-assouad-spectrum}
For $p>n \ge 2$ and $t>0$, set $\alpha_n(p) = 1 - \tfrac{n}{p}$, $\Phi_n(s) = s^{-1} - n^{-1}$, and $\theta(t) = \tfrac1{1+t}$. If $f:\Omega \to \Omega'$ is $K$-QC in $\R^n$, then
\begin{equation}\begin{split}\label{eq:ct-qc-assouad-spectrum}
&{\alpha_n(p^\RH(n,K))} \Phi_n(\dim_A^{\theta(t/K)}(E)) \le \Phi_n(\dim_A^{\theta(t)} f(E)) \\
&\qquad \le {\alpha_n(p^\RH(n,K^{n-1}))^{-1}} \Phi_n(\dim_A^{\theta(Kt)}(E)).
\end{split}\end{equation}
for any compact $E\subset \Omega$. In particular, if $n=2$ then
\begin{equation}\label{eq:ct-qc-assouad-spectrum-planar}
K^{-1} \Phi_2(\dim_A^{\theta(t/K)}(E)) \le \Phi_2(\dim_A^{\theta(t)} f(E)) \le K \Phi_2(\dim_A^{\theta(Kt)}(E)).
\end{equation}
\end{theorem}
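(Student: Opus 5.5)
The plan is to adapt the Gehring--V\"ais\"al\"a argument (the proof of \eqref{eq:qc-haus-dim-distortion-b} above) to a \emph{scale-invariant, localized} setting. I use the local reverse H\"older inequality for the Jacobian in place of global $W^{1,p}$ membership. I take $p^\RH(n,K)$ to be the supremum of exponents $p>n$ for which every $K$-QC map satisfies
$$
\Bigl(\frac1{|Q|}\int_Q Jf^{p/n}\Bigr)^{n/p}\le C\,\frac1{|Q|}\int_Q Jf
$$
uniformly on cubes with $2Q\subset\Omega$. The point is that this inequality, unlike $f\in W^{1,p}_\loc$, is invariant under rescaling of both source and target. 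That invariance is exactly what is needed to control covering numbers $N(B\cap E,r)$ uniformly over balls. By the symmetry already used above (the inverse of a $K$-QC map is $K^{n-1}$-QC), it suffices to prove one inequality, say the upper bound for $\Phi_n(\dim_A^{\theta(t)}E)$ in terms of $f(E)$. Equivalently, I prove the lower bound on $\Phi_n(\dim_A^{\theta(t)}f(E))$, and then apply it to $f^{-1}$.

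\emph{Step 1 (scale correspondence).} Since $E$ is compact, we may work inside a fixed ball on which $f$ is $\eta$-QS. There, by the local H\"older estimates for QC maps, $\eta(s)\lesssim \max\{s^{1/K'},s^{K'}\}$ with $K'$ depending on $K,n$ (and $K'=K$ when $n=2$). Fix a target ball $B'=B(f(x),R')$ and a scale $r'\le R'^{1+t}$. Let $B=B(x,R)$ be a ball with $f(B)$ comparable to $B'$. I then show that the preimage of any set of diameter $\sim r'$ inside $B'$ meets only sets of relative size $r/R$ satisfying $r\le R^{1+t/K}$ up to constants. Here the relative ratio transforms by the power $K$ via $\eta$, while the absolute scale $R$ is controlled by H\"older continuity of $f^{-1}$. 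This bookkeeping is what converts the parameter $t$ into $t/K$ (respectively $Kt$ for the inverse). The precise exponent matching, $\theta(t)\leftrightarrow\theta(t/K)$, is the step I expect to be the main obstacle. One must check that the worst-case H\"older behaviour of the absolute scale and the worst-case quasisymmetric distortion of the ratio combine exactly into the stated shift, and do not degrade further. In the plane I would use the sharp $1/K$-H\"older exponent and Astala's $p^\RH(2,K)=\tfrac{2K}{K-1}$ to get \eqref{eq:ct-qc-assouad-spectrum-planar}.

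\emph{Step 2 (Kaufman-type stopping time inside a ball).} Fix $s>\dim_A^{\theta(t/K)}E$, so that $E\cap B$ is covered by $\lesssim (R/\rho)^s$ dyadic cubes of side $\rho$ for all admissible $\rho$. Run the major/minor/critical stopping argument from the box-dimension proof above, relative to the target threshold $r'$. The cubes in the final cover of $f(E\cap B)$ are then minor cubes from the cover of $E$, together with critical cubes. In both cases each cube has image diameter comparable to $r'$, by quasisymmetry applied to parent--child cubes.

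\emph{Step 3 (normalized H\"older estimate).} For each selected cube $Q$, combine \eqref{eq:proof-1} with the change of variables $|f(Q)|=\int_Q Jf$. Then apply H\"older's inequality to $\sum_Q(\diam f(Q)/R')^{s'}$, with $\Phi_n(s')=\alpha_n(p)\Phi_n(s)$. Finally use the reverse H\"older inequality on $B$, normalized by $|f(B)|\sim R'^n$. As in \eqref{eq:proof-3}, this yields
$$
\sum_Q\Bigl(\frac{\diam f(Q)}{R'}\Bigr)^{s'}\lesssim\Bigl(\sum_Q\Bigl(\frac{\side Q}{R}\Bigr)^{s}\Bigr)^{1-s'/p}.
$$
The right-hand side is bounded uniformly: the number of cubes at each dyadic level is controlled by the $\theta(t/K)$-spectrum bound, and the geometric sum over levels converges because $s'<p$. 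Since every image piece has diameter $\sim r'$, this gives $N(f(E)\cap B',r')\lesssim (R'/r')^{s'}$. Letting $s\searrow\dim_A^{\theta(t/K)}E$ and $p\nearrow p^\RH(n,K)$ gives the left inequality in \eqref{eq:ct-qc-assouad-spectrum}. The right inequality follows by applying the same argument to $f^{-1}$.
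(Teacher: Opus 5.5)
There is a genuine gap, and it sits exactly at the step you flagged.

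\textbf{Step 1 fails.} In Step~1 you transfer a target pair $(r',R')$ to the source by composing two bounds. One is the worst-case quasisymmetric distortion of ratios, $\rho/R\lesssim (r'/R')^{1/K}$. The other is a H\"older relation between $R$ and $R'$. Together these only give $\rho/R\lesssim R'^{t/K}\lesssim R^{t/K^2}$, i.e.\ admissibility for $\theta(t/K^2)$, not $\theta(t/K)$. The loss is real, because both worst cases can occur at once. Take a planar $K$-QC map that behaves like $z\mapsto |z|^{1/K-1}z$ at scales $\ge R$ about $x$, so that $R'\approx R^{1/K}$. Let it also behave like $z\mapsto |z-y|^{K-1}(z-y)$ at scales $\le R/8$ about a point $y$ with $|y-x|=R/2$. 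Such a map exists: solve the Beltrami equation with these two coefficients on the two disjoint regions. It has a cube at $y$ of side $\approx R^{1+t/K^2}\gg R^{1+t/K}$ whose image has diameter $\approx r'=R'^{1+t}$. So your claim that preimages of $r'$-sized pieces are admissible for $\dim_A^{\theta(t/K)}E$ is false. (Also, the H\"older direction that matters is that of $f$, namely $R'\lesssim R^{1/K}$, not that of $f^{-1}$.)

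\textbf{Step 3 has a second defect.} Minor cubes of the initial cover of $E$ need not have image diameter comparable to $r'$; the parent--child argument applies only to critical cubes. So the diameter sum cannot be converted into a count for those cubes. You also never specify the scale at which the initial cover is taken. (A minor point: with level counts $\lesssim (R/\rho)^s$ at the same $s$, the level sum is $\sum_\rho 1$. You need an exponent gap $s_0<s$; the inequality $s'<p$ is not the reason it converges.)

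\textbf{The missing idea} is Kaufman's two-regime count at a single, well-chosen source scale. Set $s'=ps/(p-n+s)$ and cover $E\cap B(x,R)$ by dyadic cubes of side $\delta$ with $\delta/R=(r'/R')^{s'/s}$.
\begin{itemize}
\item \emph{Minor cubes} are counted directly by the spectrum bound, giving $\lesssim (R/\delta)^s=(R'/r')^{s'}$.
\item \emph{Major cubes} of side $\delta'\le\delta$ are counted using Morrey--Sobolev together with the reverse H\"older inequality on $B(x,2R)$. The latter gives $\int_{B(x,2R)}\|Df\|^p\lesssim R^{n-p}R'^p$. Hence there are $\lesssim (R'/r')^p(\delta'/R)^{p-n}$ such cubes at level $\delta'$, and summing over $\delta'\le\delta$ gives $(R'/r')^{s'}$.
\end{itemize}
Your H\"older-sum estimate would also work for the critical cubes, since they all lie below the admissible scale $\delta$.

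Admissibility is now needed only at the single scale $\delta$. The chain $\delta/R\le r'/R'\le R'^{t}\lesssim R^{t/K}$ uses only $\diam f(B(x,R))\lesssim R^{1/K}$. No ratio distortion enters, which is why the shift is exactly $t\mapsto t/K$.

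For the right-hand inequality, apply this to $f^{-1}$. It is $K^{n-1}$-QC, which is where $p^\RH(n,K^{n-1})$ comes from. But it is locally $K^{-1/(n-1)}$-H\"older, and $K^{-1/(n-1)}\ge 1/K$. This is what produces $\theta(Kt)$ rather than $\theta(K^{n-1}t)$ when $n\ge 3$; your unspecified $K'$ does not capture this.
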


Several features of Theorem \ref{th:ct-qc-assouad-spectrum} require further discussion. First, the quality of the distortion estimates is governed by a new higher integrability exponent. In Gehring's proof in \cite{geh:higher} of his higher integrability theorem, a key step was to prove that the Jacobian of a $K$-QC map in $\R^n$ satisfies a {\it reverse H\"older inequality} of the following type: there exists $C>0$ so that for every ball $B$ such that $2B \Subset \Omega$,
\begin{equation}\label{eq:RHI}
\left( \frac1{|2B|} \int_{2B} |Df|^p \right)^{1/p} \le C \left( \frac1{|B|} \int_B |Df|^n \right)^{1/p}.
\end{equation}
Since every QC map already lies in $W^{1,n}_\loc$, it immediately follows that in fact $f \in W^{1,p}_\loc$.

\begin{definition}\label{def:rh-exponent}
For $K \ge 1$ and $n \ge 2$, the {\it reverse H\"older inequality higher integrability exponent} $p^{\RH}(n,K)$ is the supremum of $p>n$ so that \eqref{eq:RHI} holds true for each $K$-QC map between domains in $\R^n$.
\end{definition}

It is clear that $p^\RH(n,K) \le p^\Sob(n,K)$, and conjecturally equality holds throughout. When $n=2$ Astala's proof shows that $p^\RH(2,K) = p^\Sob(2,K) = \tfrac{2K}{K-1}$ which implies the estimates in \eqref{eq:ct-qc-assouad-spectrum-planar}.

Quasiconformal maps are bi-H\"older continuous. The bounds in \eqref{eq:ct-qc-assouad-spectrum-planar} are stronger than those obtained by using Proposition \ref{prop:assouad-spectrum-properties}(4) and known H\"older regularity properties of quasiconformal maps.

Passing to the limit as $t \to 0$ in \eqref{eq:ct-qc-assouad-spectrum} yields similar conclusions for quasi-Assouad dimension, and an easy variation yields similar conclusions for Assouad dimension. In fact,\footnote{This observation is due to Ntalampekos.} using the fact that $\dim_A E = \dim_H Z$ for some weak tangent $Z$ of $E$ \cite[Proposition 5.8]{kor:carpets} and Theorem \ref{th:qc-haus-dim-distortion}, together with the observation that QC maps pass to weak tangents without increase of the dilatation $K$, one may derive the following:

\begin{theorem}\label{th:ct-qc-assouad}
If $f:\Omega \to \Omega'$ is $K$-quasiconformal map in $\R^n$ and $E \subset \Omega$, then
${\alpha_n(p^\Sob(n,K))} ( \frac1{\dim_A E} - \frac1n ) \le \frac1{\dim_A f(E)} - \frac1n \le {\alpha_n(p^\Sob(n,K^{n-1}))^{-1}} ( \frac1{\dim_A E} - \frac1n )$.
\end{theorem}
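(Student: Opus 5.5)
The plan is the one sketched in the text: reduce to weak tangents and apply Theorem \ref{th:qc-haus-dim-distortion} in its quantitative form \eqref{eq:qc-haus-dim-distortion-b}. By symmetry it suffices to prove the left-hand inequality. The right-hand inequality then follows by applying the left-hand one to $f^{-1}$, which is $K^{n-1}$-QC, with $f(E)$ in place of $E$. I also note that $\Phi_n(s)=\tfrac1s-\tfrac1n$ is decreasing, so the left inequality is an upper bound $\dim_A f(E) \le \Phi_n^{-1}\bigl(\alpha_n(p^\Sob(n,K))\Phi_n(\dim_A E)\bigr)$. Since $\dim_A$ is local and QC maps are locally weakly $H$-QS, I may assume $E \subset V=B(x_0,\tfrac12\delta(x_0))$, replacing $E$ by its closure if needed, since closure does not change Assouad dimension.

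\emph{Step 1 (tangent realizing $\dim_A f(E)$).} Set $F=f(E)$. By \cite[Proposition 5.8]{kor:carpets}, there are points $y_m=f(x_m)\in F$ and scales $\rho_m\to 0$ such that $(F,\rho_m^{-1}|\cdot|,y_m)$ converges to a weak tangent $W$ with $\dim_H W=\dim_A F$. Concretely, $W$ is a closed subset of $\R^n$ obtained as a Hausdorff limit on compacta of $\rho_m^{-1}(F-y_m)$.

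\emph{Step 2 (blow up $f$).} Choose $r_m\to0$ with $f(B(x_m,r_m))$ of size comparable to $\rho_m$, for instance $L_f(x_m,r_m)=\rho_m$. Define $g_m(z)=\rho_m^{-1}\bigl(f(x_m+r_m z)-f(x_m)\bigr)$ on growing domains $B(0,R_m)$ with $R_m\to\infty$, which is possible because $\delta(x_m)$ is bounded below on $V$. Each $g_m$ is $K$-QC, since dilatation is invariant under similarities, and satisfies the normalization $g_m(0)=0$, $L_{g_m}(0,1)=1$. By the standard compactness theory of $K$-QC maps, a subsequence converges locally uniformly to a $K$-QC map $g:\R^n\to\R^n$. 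In parallel, passing to a further subsequence, the sets $r_m^{-1}(E-x_m)$ converge on compacta to a weak tangent $Z$ of $E$. Proposition \ref{prop:conformal-dimension-and-weak-tangents} gives $\dim_H Z\le\dim_A Z\le \dim_A E$.

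\emph{Step 3 (identify and conclude).} I must check that $g(Z)=W$. This follows from locally uniform convergence of $g_m\to g$ and of $g_m^{-1}\to g^{-1}$, the latter also being $K^{n-1}$-QC limits. Together with Hausdorff convergence of the rescaled sets, this transfers inclusions in both directions on compacta. Then \eqref{eq:qc-haus-dim-distortion-b} applied to $g$ and $Z$ gives $\Phi_n(\dim_H W)\ge \alpha_n(p^\Sob(n,K))\Phi_n(\dim_H Z)\ge\alpha_n(p^\Sob(n,K))\Phi_n(\dim_A E)$, using monotonicity of $\Phi_n$. Since $\dim_H W=\dim_A f(E)$, this is the claim.

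The endpoint cases are handled separately. If $\dim_H Z=0$, Theorem \ref{th:qc-haus-dim-distortion} gives $\dim_H W=0$. The cases involving dimension $n$ are trivial, and \eqref{eq:qc-haus-dim-distortion-b} is read with the convention $\Phi_n(0)=\infty$.

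I expect the main obstacle to be Step 2–3. One must choose the source scales $r_m$ so that the limit $g$ is a nondegenerate homeomorphism, neither collapsing nor blowing up. The fix I would try first is the weak quasisymmetry normalization $L_f(x_m,r_m)=\rho_m$, which forces $\ell_f(x_m,r_m)\ge \rho_m/H$. One must then also verify that the target tangent obtained this way is the dimension-realizing $W$, not some other tangent. Since $W$ is already a limit along the full sequence $(y_m,\rho_m)$, every subsequence has the same limit, and this resolves it.
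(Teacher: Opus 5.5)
Your proposal follows the paper's route: realize the Assouad dimension as the Hausdorff dimension of a weak tangent via \cite[Proposition 5.8]{kor:carpets}, blow $f$ up along the same points and scales to a $K$-quasiconformal limit map between weak tangents of $E$ and $f(E)$, and apply \eqref{eq:qc-haus-dim-distortion-b}. The only difference is that you choose the dimension-realizing tangent on the target side, which is equivalent to the paper's source-side choice by the $f^{-1}$ symmetry. One caveat: your reduction ``since $\dim_A$ is local'' uses only finite stability of Assouad dimension (it is not countably stable), so it requires $\overline{E}$ to be a compact subset of $\Omega$, as in Theorem \ref{th:ct-qc-assouad-spectrum}. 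This hypothesis is genuinely needed: the conformal map $z\mapsto -1/\log(1-z)$ on $B(0,1)\cap B(1,\tfrac12)$ sends $\{1-2^{-k}:k\ge 2\}$, of Assouad dimension $0$, onto $\{1/(k\log 2):k\ge 2\}$, of Assouad dimension $1$.
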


We do not know if such improvement holds for Assouad spectrum, as there is no known analog for the representation via Hausdorff dimensions of weak tangents.

As an application of \eqref{eq:ct-qc-assouad-spectrum-planar} and Example \ref{ex:spirals}, we obtain a sharp QC classification theorem for polynomial spirals at the level of dilatation. Observe that the radial stretch map $f(z) = |z|^{1/K-1} z$ is $K$-QC and satisfies $f(S_p) = S_q$ where $q = \tfrac{p}{K}$.

\begin{theorem}[Chrontsios Garitsis--Tyson]\label{th:x-t}
Fix $p>q>0$. There exists a quasiconformal map $f:\C\to\C$ with $f(S_p) = S_q$ if and only if $H_f \ge \tfrac{p}{q}$.
\end{theorem}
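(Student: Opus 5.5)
The plan has two parts: an explicit map for sufficiency, and the planar estimate \eqref{eq:ct-qc-assouad-spectrum-planar} combined with the spectrum formulas in Example \ref{ex:spirals} for necessity. I read the statement as follows: a quasiconformal $f$ with $f(S_p)=S_q$ exists, and every such $f$ satisfies $H_f\ge p/q$.

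\emph{Sufficiency.} Take $K=p/q>1$ and the radial stretch $f(z)=|z|^{1/K-1}z$, which is $K$-QC. It sends $x^{-p}e^{\bi x}$ to $x^{-p/K}e^{\bi x}=x^{-q}e^{\bi x}$ and fixes $0$. Hence $f(S_p)=S_q$ and $H_f=p/q$.

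\emph{Necessity.} Suppose $f$ is $K$-QC with $K=H_f<p/q$ and $f(S_p)=S_q$. We use only the left inequality in \eqref{eq:ct-qc-assouad-spectrum-planar}, applied to the compact set $E=S_p$ (after restricting to a bounded neighbourhood if needed):
$$K^{-1}\Phi_2\bigl(\dim_A^{\theta(t/K)}S_p\bigr)\le \Phi_2\bigl(\dim_A^{\theta(t)}S_q\bigr).$$
Since $\theta(t)=\tfrac1{1+t}$, we have $\tfrac{\theta(t)}{1-\theta(t)}=\tfrac1t$. Rewrite the formulas of Example \ref{ex:spirals} in the variable $\lambda=\theta/(1-\theta)$:
\begin{itemize}
\item For $r\ge1$, $\dim_A^\theta S_r=(1+\lambda/r)\wedge 2$.
\item For $0<r<1$, $\dim_A^\theta S_r=\tfrac{2(1+\lambda)}{1+r}\wedge 2$.
\end{itemize}
In both cases the spectrum equals $2$ exactly when $\lambda\ge r$, and it is strictly less than $2$ when $\lambda<r$.

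Now choose $t=1/q$. Then $\lambda=q$ for $\theta(t)$, so $\dim_A^{\theta(t)}S_q=2$ and the right-hand side is $\Phi_2(2)=0$. On the source side, $\theta(t/K)$ corresponds to $\lambda=K/t=Kq$. Since $Kq<p$, we get $\dim_A^{\theta(t/K)}S_p<2$, so the left-hand side is strictly positive. This contradicts the displayed inequality, and therefore $H_f\ge p/q$.

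\emph{Main obstacle.} The delicate step is the bookkeeping of which case of the spectrum formula applies; $p$ and $q$ may lie on different sides of $1$. The threshold computation above disposes of this uniformly, because in both regimes the spectrum of $S_r$ reaches $2$ exactly at $\lambda=r$. A minor check is that all parameters $\theta(t)$ and $\theta(t/K)$ lie in $(0,1)$, which holds since $t>0$.

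A more substantive check is that Theorem \ref{th:ct-qc-assouad-spectrum} is stated for compact $E$ in a domain while $f$ is a global map. This is handled by working with a large disc $\Omega$ containing $S_p$, which is compact because it includes $0$. We also need the Assouad spectrum of $S_q=f(S_p)$ to be computed exactly as in Example \ref{ex:spirals}. This holds because the spectrum uses scales below $1$ and is unaffected by the bounded-diameter restrictions.
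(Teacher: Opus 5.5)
Your proof is correct and follows the paper's argument. Sufficiency comes from the radial stretch $z\mapsto |z|^{1/K-1}z$, and necessity from \eqref{eq:ct-qc-assouad-spectrum-planar} together with Example \ref{ex:spirals}; the paper phrases necessity with $f^{-1}$ and the right-hand inequality, which is equivalent to your left-hand inequality for $f$ because $f^{-1}$ is also $K$-QC in the plane, and your choice $t=1/q$ with the threshold $\dim_A^\theta S_r=2 \iff \theta/(1-\theta)\ge r$ makes the paper's ``suitable choice of $t$'' explicit. One small fix: with $1<x<\infty$ the set $S_p$ is not closed, since it omits the endpoint $e^{\bi}$, so apply the theorem to $\overline{S_p}$, using $f(\overline{S_p})=\overline{S_q}$ and the fact that the Assouad spectrum is unchanged under taking closures.
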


For the proof, we observe that if $H_f < \tfrac{p}{q}$ then, by making a suitable choice of $t>0$ and using Example \ref{ex:spirals} we obtain that $f^{-1}$ maps a spiral with $\theta(t)$-Assouad spectrum equal to $2$ onto a spiral with $\theta(t/K)$ spectrum strictly less than $2$. This contradicts \eqref{eq:ct-qc-assouad-spectrum-planar}. For a higher-dimensional analog of Theorem \ref{th:x-t}, involving the quasiconformal classification of polynomially decreasing sequences of spiral shells via the $n$-dimensional assertions in Theorem \ref{th:ct-qc-assouad-spectrum}, see \cite{x:shells}.

Since both Hausdorff and box-counting dimension satisfy the standard upper bounds for the increase under continuous, supercritical Sobolev maps, the following result from \cite{ft:Sobolev-intermediate-dimension} is not surprising.

\begin{theorem}[Fraser--Tyson]\label{th:sobolev-distortion-intermediate-dimension}
If $E \Subset \Omega \subset \R^n$, $n \ge 2$, and $f \in W^{1,p}(\Omega:\R^N)$ continuous and supercritical, then $\dim_\theta f(E) \le \tfrac{p \, \dim_\theta E}{p-n+\dim_\theta E}$ for all $0<\theta<1$.

In particular, two-sided bounds of the usual type hold true for quasiconformal mappings, with the sharp constants $1/K$ and $K$ in the planar case.
\end{theorem}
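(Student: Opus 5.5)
Throughout, let $s > \dim_\theta E$ and let $s' = \tfrac{ps}{p-n+s}$. The aim is to show $\dim_\theta f(E) \le s'$; letting $s \searrow \dim_\theta E$ then gives the claim. As in Kaufman's proofs of Proposition \ref{prop:kaufman} and of the box-counting bound, the tool is the Morrey--Sobolev inequality on dyadic cubes. It is combined with a stopping-time decomposition, because Definition \ref{def:intermediate-dimensions} requires image covers whose diameters lie in a window $[\rho^{1/\theta},\rho]$.

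\textbf{Scales.} Given a small image scale $\rho$, choose a source scale $\delta$ by $\delta^{s/s'} = \rho$. Because $s \le s'$ and the map $s \mapsto s/s'$ is affine in the relevant sense ($s/s' = (p-n+s)/p$), the window $[\delta^{1/\theta},\delta]$ is carried to a window contained in $[\rho^{1/\theta},\rho]$ up to constants. More precisely, $\delta^{(1/\theta)(s/s')} = \rho^{1/\theta}$. Next take a cover $\{A_i\}$ of $E$ with $\delta^{1/\theta} \le \diam A_i \le \delta$ and $\sum_i (\diam A_i)^s \le C$, and replace each $A_i$ by boundedly many dyadic cubes of comparable size, all inside $U \Subset \Omega$.

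\textbf{Modify the image cover so that every piece has diameter between $(\diam Q)^{s/s'}$-type lower bounds and $\rho$.} For each cube $Q$ with side $r_Q$, set the target radius $\rho_Q := r_Q^{s/s'} \in [\rho^{1/\theta},\rho]$. Declare $Q$ \emph{minor} if $\diam f(Q) \le \rho_Q$. In that case cover $f(Q)$ by a single set of diameter $\rho_Q$; enlarging sets is allowed since only an upper bound is needed. This contributes $\rho_Q^{s'} = r_Q^s$, so summing over minor cubes gives at most $C$.

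For a \emph{major} cube, subdivide dyadically, and stop at descendants $Q'$ that are minor relative to the same fixed threshold $\rho_Q$. Such descendants form critical cubes, which are children of major ones. Use Morrey--Sobolev on the parents: $\rho_Q \le C\, \side(Q')^{1-n/p}\bigl(\int_{\hat Q'}\|Df\|^p\bigr)^{1/p}$. Count the critical cubes by summing over generations, as in the footnoted argument in the box-counting proof. For subcubes of $Q$ of side $2^{-j}r_Q$, Hölder's inequality and disjointness bound their number by $\rho_Q^{-p}(2^{-j}r_Q)^{p-n}\int_Q\|Df\|^p$. Summing the geometric series gives
\[
\#\{\text{critical}\} \lesssim \rho_Q^{-p} r_Q^{p-n} \int_Q \|Df\|^p = r_Q^{-ps/s'+p-n}\int_Q\|Df\|^p = r_Q^{\,s - ps/s'}\,r_Q^{p-n-s+ps/s'}\cdots
\]
Using $ps/s' = p-n+s$, this simplifies to $r_Q^{-s}\int_Q\|Df\|^p$. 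Each critical cube then contributes $\rho_Q^{s'} = r_Q^s$. The total contribution from the descendants of $Q$ is therefore $\lesssim \int_Q\|Df\|^p$, and summing over $Q$ gives at most $\|Df\|^p_{L^p(U)}$, which is bounded. The images of critical cubes have diameter at most $\rho_Q$, which lies in the allowed window; pad them up to exactly $\rho_Q$.

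\textbf{Main obstacle.} The step I expect to be hardest is the bookkeeping in the last step. The dyadic cubes produced from different $A_i$ may overlap, so the integrals $\int_Q \|Df\|^p$ cannot simply be summed. To handle this, I would first pass to the maximal dyadic cubes among the cover, which are disjoint; this requires that dropping non-maximal cubes keeps the side lengths in the window, which holds. I also need to check that the geometric series in $j$ converges: this uses $p>n$, which is exactly where supercriticality enters. Combining the minor and critical contributions yields $\sum (\diam)^{s'} \le C + C\|Df\|_p^p$ with all diameters in $[\rho^{1/\theta},\rho]$, so $\dim_\theta f(E) \le s'$. The quasiconformal consequence then follows as for Theorem \ref{th:qc-haus-dim-distortion}, by applying the bound to $f$ and to $f^{-1}$ with $p \nearrow p^\Sob$, and using Astala's value in the plane.
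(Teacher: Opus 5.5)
Your argument is correct, but it is organized differently from the proof sketched in the paper.

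\textbf{The paper's route.} It keeps the major/minor decomposition of Kaufman's box-counting proof, with a single image threshold. It then refines that decomposition by admissibility:
\begin{itemize}
\item minor cubes whose images already have diameter in the $\theta$-window are handled by the H\"older estimate from the Hausdorff-dimension proof, $\sum_Q (\diam f(Q))^{s'} \lesssim (\sum_Q r_Q^{s})^{1-s'/p}\|Df\|_{L^p(U)}^{s'}$;
\item minor cubes with too-small images are enlarged to an admissible diameter and counted;
\item the critical cubes are counted as in the box-counting argument.
\end{itemize}

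\textbf{Your route.} You give each cover cube its own threshold $\rho_Q = r_Q^{s/s'}$. Since $r \mapsto r^{s/s'}$ carries $[\delta^{1/\theta},\delta]$ exactly onto $[\rho^{1/\theta},\rho]$, admissibility is automatic. Every minor cube can then simply be padded to contribute $\rho_Q^{s'} = r_Q^s$. For major cubes, the identity $\rho_Q^{p} r_Q^{n-p} = r_Q^{s}$ (which is just $s' = ps/(p-n+s)$) turns the Morrey--Sobolev count of critical descendants into a contribution $\lesssim \int_Q \|Df\|^p$ per major cube.

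\textbf{Comparison.}
\begin{itemize}
\item Your version eliminates both the H\"older step and the case split on image size, so one mechanism treats all cubes.
\item In exchange, it needs the cover cubes to be disjoint in order to sum the local $L^p$ masses; your passage to maximal dyadic cubes provides this. The paper's H\"older part needs the same bounded overlap.
\item You obtain only the additive bound $\lesssim \sum_Q r_Q^s + \|Df\|_{L^p(U)}^p$ rather than a H\"older-type product, but that is all a dimension estimate requires.
\item The paper's route has the expository advantage of visibly combining Kaufman's two proofs. Yours is shorter and makes transparent why the exponent $ps/(p-n+s)$ is compatible with the $\theta$-window.
\end{itemize}

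The constant-factor adjustments of the windows you flag are routine: pad sets up to the lower end and rescale $\delta$. The quasiconformal consequences follow exactly as you describe, applying the bound to $f$ and $f^{-1}$ with $p \nearrow p^{\Sob}$ and using Astala's exponent in the plane.
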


The proof of Theorem \ref{th:sobolev-distortion-intermediate-dimension} synthesizes elements of the Hausdorff and box-counting proofs. The major/minor decomposition of dyadic cubes is further refined by considering cubes whose images do or do not satisfy the appropriate lower bound on diameter needed to ensure that the target collection is admissible for $\theta$-intermediate dimension. For cubes whose image has too small diameter, one replaces the image with a larger set of an appropriate diameter. The desired estimate is obtained by carrying out the Hausdorff dimension-type proof for minor cubes whose image is $\theta$-admissible, and using cardinality estimates \`a la the box-counting-type proof for the remaining collections. For details, see \cite[Section 4]{ft:Sobolev-intermediate-dimension}.

We next state some applications of both the H\"older and Sobolev distortion estimates for intermediate dimension to quasiconformal classification.
\begin{itemize}
\item If $E$ and $F$ are compact sets with $\dim_H E = \dim_{qH} E = 0$ and $0 < \dim_B F = \dim_A F$, then no QC map of $\R^n$ sends $E$ to $F$. To see this, note that Proposition \ref{prop:intermediate-dimension-properties}(5) implies that $\dim_{qH} F > 0 = \dim_{qH} E$, but this conclusion yields a contradiction after letting $\theta \searrow 0$ in Proposition \ref{prop:intermediate-dimension-properties}(5). For example, $F_p$, $p>0$ (considered as a subset of $\R^n$), is QC inequivalent to any $F$ with $0 < \dim_B F = \dim_A F$.
\item Let $E$ and $F$ be compact with $\dim_{qH} E = \dim_{qH} F = 0$ such that $\theta \mapsto \dim_\theta E$ and $\theta \mapsto \dim_\theta F$ are right differentiable at $\theta = 0$ with 
$$
0< (d/d\theta) \dim_\theta E |_{\theta = 0} < (d/d\theta) \dim_\theta F |_{\theta = 0} = + \infty.
$$
Then no QC map of $\R^n$ sends $E$ to $F$. 
\item Banaji--Kolossv\'ary \cite{bk:carpets} exhibit totally disconnected Bedford--McMullen carpets $E$ and $F$ with $\dim E = \dim F$ for $\dim \in \{\dim_H,\dim_B,\dim_A\}$, but different intermediate dimension profiles. Using the latter, they conclude that $E$ and $F$ are not equivalent by any bi-H\"older homeomorphism with H\"older exponent sufficiently close to one. Using Theorem \ref{th:sobolev-distortion-intermediate-dimension} we improve their conclusion: $E$ and $F$ are not equivalent by any $K$-QC map with $K$ sufficiently close to one, moreover, the bound on $K$ obtained is strictly better than the bound obtained using the Banaji--Kolossv\'ary result and known H\"older regularity properties of QC maps.
\end{itemize}

\begin{remark}
The estimate $K^{-1}((\dim_\theta E)^{-1} - \tfrac12) \le (\dim_\theta f(E))^{-1} - \tfrac12$ for compact sets $E \subset \C$ and $K$-QC maps $f$ of $\C$ is sharp. Surprisingly, the proof of this fact is significantly simpler than Astala's proof of sharpness of the corresponding Hausdorff dimension bounds. In fact, the following fact holds true, highlighting an intriguing connection between dimension interpolation theory and quasiconformal maps.
\end{remark}

\begin{proposition}
Let $E \subset \C$ be non-uniformly perfect with $\dim_A E = 2$, and let $f:\C \to \C$ be $K$-quasiconformal so that $(\dim_B f(E))^{-1} - \tfrac12 = \tfrac1K ( (\dim_B E)^{-1} - \tfrac12 )$ (the pair $(E,f)$ is sharp for the planar QC box-counting dimension estimate) and $\dim_\theta E = \tfrac{2\theta \dim_B E}{2 + (1-\theta) \dim_B E}$ for some $0<\theta<1$ ($E$ is sharp for the Banaji--Rutar lower bound \eqref{eq:intermediate-dimension-lower-bound}). Then $(\dim_\theta f(E))^{-1} - \tfrac12 = \tfrac1K ( (\dim_\theta E)^{-1} - \tfrac12 )$.
\end{proposition}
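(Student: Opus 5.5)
The plan is to squeeze $\dim_\theta f(E)$ between two bounds that turn out to coincide. The upper bound comes from the planar Sobolev/QC distortion estimate for intermediate dimensions (Theorem \ref{th:sobolev-distortion-intermediate-dimension} with the sharp constant $1/K$). The lower bound comes from the Banaji--Rutar inequality \eqref{eq:intermediate-dimension-lower-bound} applied to the image set $f(E)$. Write $\Phi(s) = s^{-1}-\tfrac12$, $b = \dim_B E$, $b' = \dim_B f(E)$ and $d = \dim_\theta E$. The hypothesis gives $\Phi(b') = K^{-1}\Phi(b)$.

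\emph{Step 1: the upper bound on $\dim_\theta f(E)$.} Theorem \ref{th:sobolev-distortion-intermediate-dimension}, in its planar QC form with constant $1/K$, gives $\Phi(\dim_\theta f(E)) \ge K^{-1}\Phi(d)$. So it suffices to prove $\Phi(\dim_\theta f(E)) \le K^{-1}\Phi(d)$.

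\emph{Step 2: the Assouad dimension of the image.} We need $\dim_A f(E) = 2$. By Theorem \ref{th:ct-qc-assouad}, $\Phi(\dim_A f(E)) \le K\,\Phi(\dim_A E) = K\,\Phi(2) = 0$, hence $\dim_A f(E)=2$. (One could instead pass to weak tangents: a weak tangent of $E$ of full dimension is carried to one of $f(E)$.) I expect the non-uniform perfectness hypothesis is what makes the Banaji--Rutar bound attainable. It is not needed for this step, but the inequality direction we use does not require it.

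\emph{Step 3: the lower bound.} Inequality \eqref{eq:intermediate-dimension-lower-bound} with $\dim_A = 2$ reads $\dim_\theta F \ge \tfrac{2\theta\,\dim_B F}{2-(1-\theta)\dim_B F}$. (The proposition writes the denominator as $2+(1-\theta)\dim_B E$; I take it as the Banaji--Rutar expression and follow \eqref{eq:intermediate-dimension-lower-bound}.) Applying this to $F=f(E)$ gives $\dim_\theta f(E) \ge g(b')$, where $g(x)=\tfrac{2\theta x}{2-(1-\theta)x}$. By hypothesis $d=g(b)$.

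Now take reciprocals:
\[
\frac1{g(x)} = \frac{1}{\theta x} - \frac{1-\theta}{2\theta}, \qquad\text{so}\qquad \Phi(g(x)) = \frac1{\theta}\Bigl(\frac1x - \frac12\Bigr) = \frac{\Phi(x)}{\theta}.
\]
Since $\Phi$ is decreasing,
\[
\Phi(\dim_\theta f(E)) \le \Phi(g(b')) = \frac{\Phi(b')}{\theta} = \frac{\Phi(b)}{K\theta} = K^{-1}\Phi(g(b)) = K^{-1}\Phi(d).
\]
Combined with Step 1, this gives equality.

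The main obstacle is Step 2 together with the bookkeeping in Step 3. The algebra hinges on the identity $\Phi\circ g = \theta^{-1}\Phi$, which is exactly why the Banaji--Rutar bound with $\dim_A=2$ commutes with the Möbius-type distortion $\Phi\mapsto K^{-1}\Phi$. If the sign in the denominator of the stated formula is genuinely a ``$+$'', this identity fails. In that case I would recheck \eqref{eq:intermediate-dimension-lower-bound} and, if necessary, replace the role of $\dim_A E=2$ accordingly.
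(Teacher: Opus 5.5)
Your argument is correct and is the one the survey evidently intends, although the survey writes out no proof of this proposition. You squeeze $\dim_\theta f(E)$ between the sharp planar upper bound from Theorem \ref{th:sobolev-distortion-intermediate-dimension} and the lower bound \eqref{eq:intermediate-dimension-lower-bound} applied to $f(E)$, linked by the identity $\Phi\circ g=\theta^{-1}\Phi$. You are also right that the denominator in the statement should read $2-(1-\theta)\dim_B E$: with a plus sign the hypothesis would contradict \eqref{eq:intermediate-dimension-lower-bound}, and the example $E^\alpha_{s,2}$ confirms the minus sign.

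Step 2 is harmless but unnecessary. The right-hand side of \eqref{eq:intermediate-dimension-lower-bound} is decreasing in $\dim_A$, so the trivial bound $\dim_A f(E)\le 2$ already gives $\dim_\theta f(E)\ge g(\dim_B f(E))$.
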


An example of a set to which the previous proposition applies is the planar polynomial sequence set $E = E_{s,2}^\alpha$ for $-1<\alpha<0$. Note that if $-1<\alpha<\beta<0$ then the sets $E_{s,2}^\alpha$ and $E_{s,2}^\beta$ are related by the radial stretch map $f_\gamma$ with $\gamma = |\beta|/|\alpha|$.

Finally, we return to conformal dimension.

\begin{theorem}[Fraser--Tyson]\label{ft:intermediate-conformal-dimension}
Let $E \subset \R^n$ be bounded with $\dim_{qH} E < 1$. Then for each $\theta<1$, it holds that $\inf \{ \dim_A^\theta f(E) \, : \, \mbox{$f:\R^n \to \R^n$ quasiconformal} \} = 0$. In particular, $C\dim_B E = 0$.
\end{theorem}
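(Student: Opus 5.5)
The strategy is to reduce to Kovalev's theorem (Theorem \ref{th:kovalev-theorem}) in its global Euclidean form: any set $F\subset\R^n$ with $\dim F<1$, for $\dim\in\{\dim_H,\dim_B\}$, has $\GQC_{\R^n}\dim F=0$. The one dimension we control well under QC maps is the $\theta'$-intermediate dimension for small $\theta'$. For the Assouad spectrum, Proposition \ref{prop:assouad-spectrum-properties}(4) gives $\dim_A^\theta F \le \dim_B F/(1-\theta)$, so for fixed $\theta<1$ it suffices to make $\dim_B f(E)$ arbitrarily small. Hence the target reduces to the final assertion, $C\dim_B E=0$, upgraded to a global QC map of $\R^n$. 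So the plan is to show that for every $\eta>0$ there is a QC map $f$ of $\R^n$ with $\dim_B f(E)<\eta$.

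\textbf{Step 1.} Use $\dim_{qH}E<1$ to pick $\theta'>0$ with $\dim_{\theta'}E<1$. Then $E$ admits, at every scale $\delta$, covers by sets of diameters in $[\delta^{1/\theta'},\delta]$ with $\sum(\diam A_i)^s\le C$ for some $s<1$.

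\textbf{Step 2.} Run Kovalev's dimension-reduction construction on these covers. Kovalev's proof builds QS maps scale by scale from covers with $s$-sums below one, crushing the clusters in the covers. Here the covers only span the scale window $[\delta^{1/\theta'},\delta]$ rather than all scales. The aim is to produce a QC map $f$ of $\R^n$ such that $f(E)$ has, at each scale, covers by at most $\delta^{-\eta'}$ sets of a single comparable size, giving $\dim_B f(E)\le\eta'$.

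The key heuristic is that a QC map is locally H\"older with exponent close to $1/K$ but can shrink clusters by arbitrary factors. By collapsing each cover element of diameter $r\in[\delta^{1/\theta'},\delta]$ to size roughly $\delta^{M}$ for huge $M$, one homogenizes the scales in the cover. A cover with $\sum r_i^s\le C$ and $r_i\ge \delta^{1/\theta'}$ has cardinality at most $C\delta^{-s/\theta'}$, which is polynomial in $1/\delta$. After shrinking every piece to scale $\delta^M$, the box-counting exponent of the image is at most $(s/\theta')/M$, which can be made small by taking $M$ large. The choice of $M$ must be uniform along a sequence of scales $\delta_k=\delta_{k-1}^{M}$ iterated hierarchically, as in Example \ref{ex:gv}, so that the composite map remains QC with controlled distortion.

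\textbf{Step 3.} Conclude. Combining Steps 1 and 2 gives $\dim_B f(E)\to 0$ along suitable maps $f$. Proposition \ref{prop:assouad-spectrum-properties}(4) then yields $\dim_A^\theta f(E)\to0$ for each fixed $\theta<1$, and in particular $C\dim_B E=0$.

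\textbf{Main obstacle.} The hard part is Step 2: the hierarchical collapsing must be realized by a single QC map with bounded $K$. Neighboring cover elements at one level may be close, so they cannot be shrunk independently. The first thing to try is Kovalev's device of grouping pieces into uniformly separated clusters. The condition $s<1$ (rather than merely $s<n$) is exactly what guarantees that enough clusters are well separated relative to their size, via a one-dimensional projection or annulus argument, for the shrinking to proceed with bounded distortion. If Kovalev's construction is used as a black box on a set built from $E$ and the covers, the remaining work is to verify that it can be adapted to covers restricted to scale windows.
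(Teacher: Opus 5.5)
Your outer reduction is sound and matches the paper's: pick $\theta'$ with $\dim_{\theta'}E<1$, make $\dim_B f(E)$ small, and finish with \eqref{eq:Assouad-spectrum-upper-bound}. The gap is in Step 2, and it goes beyond the unverified QC realization. Even if the collapsing could be carried out, your computation would not bound $\dim_B f(E)$.

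Shrinking the members of one $\theta'$-admissible cover at scale $\delta_k$ to size $\delta_k^M=\delta_{k+1}$ gives only
\[
N(f(E),\delta_{k+1})\lesssim\delta_k^{-s/\theta'}=\delta_{k+1}^{-\eta}, \qquad \eta=\frac{s}{\theta' M}.
\]
This is control along the lacunary sequence $\delta_{k+1}=\delta_k^M$ only. Upper box dimension needs every scale. For $r$ just below $\delta_k$, monotonicity gives only
\[
N(f(E),r)\le N(f(E),\delta_{k+1})\lesssim\delta_k^{-M\eta}\approx r^{-s/\theta'}.
\]
So the factor $M$ you gain in the exponent is lost exactly in the gap between consecutive scales. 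To do better, you would have to control how the roughly $\delta_k^{-s/\theta'}$ collapsed pieces sit inside the image blobs of size $\delta_k$ at every intermediate scale. Homogenizing sizes at the single scale $\delta_{k+1}$ says nothing about that arrangement.

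The missing idea is to keep the target dimension equal to the hypothesis dimension. The paper first proves Kovalev's implication for $\dim_{\theta'}$ itself, by adapting Kovalev's proof: if $\dim_{\theta'}E<1$, then $\inf\{\dim_{\theta'}f(E):\mbox{$f:\R^n\to\R^n$ QC}\}=0$. This adaptation is natural because $\dim_{\theta'}$ is defined through $s$-sums over covers, just as $\dim_H$ is, only with a restricted scale window. Its requirement that good covers exist for every small $\delta$ supplies exactly the all-scales control your scheme lacks.

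Your own cardinality observation then finishes the argument, provided you apply it to $f(E)$ rather than to $E$. A cover of $f(E)$ with diameters in $[\delta^{1/\theta'},\delta]$ and $\sum(\diam A_i)^{\eta}\le C$ has at most $C\delta^{-\eta/\theta'}$ members, so $\dim_B f(E)\le\dim_{\theta'}f(E)/\theta'$. The paper gets the same bound from \eqref{eq:intermediate-dimension-lower-bound} with $\dim_A f(E)\le n$. Choosing $f$ with $\dim_{\theta'}f(E)<\theta'(1-\theta)\eps$ gives $\dim_B f(E)<(1-\theta)\eps$, hence $\dim_A^\theta f(E)<\eps$. No homogenization to a scale $\delta^M$ and no lacunary hierarchy are needed. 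The only substantive work is the Kovalev-type construction for $\dim_{\theta'}$, which the paper also only sketches as an adaptation.
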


Examples of sets to which these conclusions apply include the following:
\begin{itemize}
\item $E = Z \times F_p$ for any $p>0$ and $Z \subset \R^{n-1}$ so that $\dim_{qH} Z < 1$ and $\dim_B Z \ge \tfrac{p}{p+1}$,
\item $E = F_p \times F_p \subset \C$ with $0<p\le 1$,
\item $E = F_p \times F_{1/p} \subset \C$ for any $p>0$,
\item $E = E_{s,2}^{-1} \times F_p \subset \R^3$ for any $s$ and $p$ such that $s \le 1 + \tfrac2p$.
\end{itemize}
Moreover, all of the above sets $E$ have $\dim_B E \ge 1$, whence the stated conclusion regarding $C\dim_B E$ does not follow from Theorem \ref{th:kovalev-theorem}.

We comment on the proof of Theorem \ref{ft:intermediate-conformal-dimension}. First, we prove that each instance $\dim_\theta$ of intermediate dimension satisfies the implication in Kovalev's theorem: if $\dim_\theta E < 1$ then $\inf \{ \dim_\theta f(E):\mbox{$f:\R^n \to \R^n$ QC} \} = 0$. This is done by adapting the proof of Theorem \ref{th:kovalev-theorem}. Next, we use the relationships between dimensions as stated in 
\eqref{eq:Assouad-spectrum-upper-bound} and \eqref{eq:intermediate-dimension-lower-bound}. Assume that $E$ satisfies $\dim_{\theta'} E < 1$ for some $\theta'>0$ and fix $\theta<1$ and $\eps>0$. Pick a QC map $f:\R^n \to \R^n$ so that $\dim_{\theta'} f(E) < \tfrac{\eps (1-\theta)\theta'}{1-(1-\theta')\eps/n}$. Then $\dim_B f(E) < (1-\theta) \eps$ by \eqref{eq:intermediate-dimension-lower-bound} and $\dim_A^\theta f(E) < \eps$ by \eqref{eq:Assouad-spectrum-upper-bound}.

\medskip

To conclude, we summarize in Table \ref{tab:1} the following mapping properties for the various notions of metric dimension introduced in this survey:
\begin{itemize}
\item {\it Lipschitz contractivity:} $\dim f(X) \le \dim X$ for any Lipschitz mapping $f:X \to Y$?
\item {\it H\"older quasicontractivity:} $\dim f(X) \le \alpha^{-1} \dim X$ for any $\alpha$-H\"older mapping $f:X \to Y$?
\item {\it Bi-Lipschitz invariance:} if $X$ is bi-Lipschitz equivalent to $Y$, must $\dim X = \dim Y$?
\item {\it Sobolev distortion bounds:} for $f \in W^{1,p}(\Omega \subset \R^n:\R^N)$ continuous and supercritical, must we have $\dim f(E) \le \tfrac{p \, \dim E}{p-n+\dim E}$ for sets $E \subset \Omega$?
\item {\it Quasiconformal distortion bounds:} for $f:\Omega \to \Omega'$ $K$-QC in $\R^n$, do two-sided bounds of the form 
$$
\alpha_n(p_1) \Phi_n(\dim E) \le \Phi_n(\dim f(E)) \le \alpha_n(p_2)^{-1} \Phi_n(\dim E), \quad \Phi_n(s) = s^{-1}-n^{-1},
$$
hold for $E \subset \Omega$ for suitable $p_1,p_2$ depending on $K$ and $n$?
\end{itemize}

\begin{table}[h]
\caption{Mapping properties of metric dimensions}
\label{tab:1}       
\begin{tabular}{p{4.5cm}p{1cm}p{1cm}p{1cm}p{1cm}p{1cm}}
\hline\noalign{\smallskip}
Dimension & $\dim_H$ & $\dim_\theta$ & $\dim_B$ & $\dim_A^\theta$ & $\dim_A$ \\
\\
Lipschitz contractivity & \checkmark & \checkmark & \checkmark & $\cX$ & $\cX$ \\
H\"older quasi-contractivity & \checkmark & \checkmark & \checkmark & $\cX$ & $\cX$ \\
Bi-Lipschitz invariance & \checkmark & \checkmark & \checkmark & \checkmark & \checkmark \\
Sobolev distortion bounds & \checkmark & \checkmark & \checkmark & $\cX$ & $\cX$ \\
QC distortion bounds & \checkmark & \checkmark & \checkmark & \checkmark & \checkmark \\
Higher integrability exponent & $p^\Sob$ & $p^\Sob$ & $p^\Sob$ & $p^\RH$ & $p^\Sob$ \\
\noalign{\smallskip}\hline\noalign{\smallskip}
\end{tabular}
\end{table}

\subsection{References}

The two types of dimension interpolants discussed in this section were introduced in \cite{fy:assouad-spectrum} and \cite{ffk:intermediate-dimensions}. Several excellent survey articles have also appeared in print. We particularly recommend surveys by Fraser \cite{fr:survey} and by Falconer \cite{fal:survey}. The reader interested in more information about Assouad dimension is encouraged to consult Fraser's book \cite{fr:book} for a nice introduction to the subject from the perspective of fractal geometry.

\bibliographystyle{acm}
\bibliography{biblio}
\end{document}